\DeclareMathOperator{\id}{id}
\DeclareMathOperator*{\argmin}{argmin}
\DeclareMathOperator{\hess}{Hess}
\newcommand{\scal}[2]{\ensuremath{\langle #1 , #2 \rangle}} 
\newcommand{\norm}[1]{\left\lVert#1\right\rVert}
\newcommand{\Leb}{\mathscr{L}}
\newcommand{\N}{\mathbb{N}}
\newcommand{\R}{\mathbb{R}}
\newcommand{\de}{\ensuremath{\, \mathrm d}} 
\newcommand\restr[2]{{
  \left.\kern-\nulldelimiterspace 
  #1 
  \right|_{#2} 
  }}
\newcommand{\RCD}{\mathsf{RCD}}
\newcommand{\X}{\mathsf{X}}
\newcommand{\di}{\mathsf d} 
\DeclareMathOperator{\evi}{EVI}
\DeclareMathOperator{\edi}{EDI}
\newcommand{\evikn}{\evi_{K,N}}
\newcommand{\evion}{\evi_{0,N}}
\newcommand{\EVIkn}{\mathbb{EVI}_{K,N}}
\newcommand{\EVI}{\mathbb{EVI}}
\newcommand{\rr}{\mathcal{R}}
\newcommand{\skn}{\mathfrak s_{K,N}}
\newcommand{\ckn}{\mathfrak c_{K,N}}
\newcommand{\eqdef}{\coloneqq}
\newcommand{\loc}{\mathrm{loc}}
\newcommand{\paren}[1]{\left(#1\right)}
\newcommand{\tparen}[1]{\big(#1\big)}
\newcommand{\braket}[1]{\left[#1\right]}
\newcommand{\tbraket}[1]{\big[#1\big]}
\newcommand{\abs}[1]{\left\lvert#1\right\rvert}
\newcommand{\seq}[1]{\left(#1\right)}
\title{\textbf{Gradient flows of $(K,N)$-convex functions\\with negative $N$}}
\date{\today}
\author{Lorenzo Dello Schiavo\footnote{Dipartimento di Matematica -- Universit\`a degli Studi di Roma ``Tor Vergata''. \textit{E-mail}: \href{mailto:delloschiavo@mat.uniroma2.it}{delloschiavo@mat.uniroma2.it}}, Mattia Magnabosco\footnote{Mathematical Institute, University of Oxford. \textit{E-mail}:  \href{mailto:mattia.magnabosco@maths.ox.ac.uk}{mattia.magnabosco@maths.ox.ac.uk}} \ and Chiara Rigoni\footnote{Faculty of Mathematics, University of Vienna. \textit{E-mail}: \href{mailto:chiara.rigoni@univie.ac.at}{chiara.rigoni@univie.ac.at}}}
\newtheoremstyle{remark}
        {10pt}
        {10pt}
        {}
        {}
        {\itshape}
        {.}
        {.4em}
        {}
\newtheoremstyle{proof}
        {10pt}
        {10pt}
        {}
        {}
        {\itshape}
        {.}
        {.4em}
        {}
\newtheoremstyle{definition}
        {10pt}
        {10pt}
        {}
        {}
        {\bfseries}
        {.}
        {.4em}
        {}
\newtheoremstyle{theorem}
        {10pt}
        {10pt}
        {\slshape}
        {}
        {\bfseries}
        {.}
        {.4em}
        {}
\theoremstyle{theorem}
\newtheorem{theorem}{Theorem}[section]
\newtheorem{prop}[theorem]{Proposition}
\newtheorem{corollary}[theorem]{Corollary}
\newtheorem{lemma}[theorem]{Lemma}
\theoremstyle{definition}
\newtheorem{assumption}[theorem]{Assumption}
\newtheorem{definition}[theorem]{Definition}
\theoremstyle{remark}
\newtheorem{example}[theorem]{Example}
\newtheorem{remark}[theorem]{Remark}
\theoremstyle{proof}
\newtheorem*{pro}{Proof}
 {\popQED\end{pro}}
\renewcommand\xleftrightarrow[2][]{%
  \ext@arrow 9999{\longleftrightarrowfill@}{#1}{#2}}
\newcommand\longleftrightarrowfill@{%
  \arrowfill@\leftarrow\relbar\rightarrow}
\begin{document}

\maketitle
\begin{abstract}
We discuss $(K,N)$-convexity and gradient flows for $(K,N)$-convex functionals on metric spaces, in the case of real~$K$ and \emph{negative}~$N$. 
In this generality, it is necessary to consider functionals unbounded from below and/or above, possibly attaining as values both the positive and the negative infinity.
We prove several properties of gradient flows of $(K,N)$-convex functionals characterized by Evolution Variational Inequalities, including contractivity, regularity, and uniqueness.
\end{abstract}


\section{Introduction}

This paper aims to explore the properties of gradient flows for $(K, N)$-convex functions, specifically when the parameter $N$ is negative, within the framework of metric spaces.

\paragraph{Gradient flows} 
The theory of gradient flows was developed to generalize the classical notion of gradient-driven evolution beyond smooth and linear spaces, such as Hilbert spaces, to broader settings like Banach or metric spaces. One of the most widely studied and influential theories, with significant applications to various types of PDEs, involves the evolution of systems within a Hilbert space $X$, driven by a lower semicontinuous and convex (or $K$-convex) functional $\varphi \colon X \to (- \infty, +\infty]$
 with non-empty, proper domain  $\mathcal D[\varphi] \coloneqq \{ y \in X : \varphi (y) < \infty\}$. In this case, the evolution can be described by a locally Lipschitz curve $u \colon (0, +\infty) \to \mathcal{D}[\varphi]$ solving the differential inclusion
 \begin{equation}\label{eq:difincl}
 u'(t) \in -\partial \varphi\big(u(t)\big) \quad \text{ for } \mathcal{L}^1\text{-a.e. } t > 0 \quad \text{ with } \quad \lim_{t \downarrow 0} u(t) = u_0 \in \overline{\mathcal{D}[\varphi]},
 \end{equation}
where $\partial \varphi$ is the Fr\'echet subdifferential. The existence, uniqueness, well-posedness, and regularity of solutions to  \eqref{eq:difincl} have been examined in several foundational works (see e.g. \cite[Chs. III, IV]{BrezisBook}).

Among the many important properties of the solution $u(t)$ to \eqref{eq:difincl}, we recall that the energy map $t \mapsto \varphi \big(u(t)\big)$ is absolutely continuous and satisfies the following \emph{energy dissipation identity}
\begin{equation}\label{eq:EDIIntro}
\frac{\de}{\de t} \varphi\big(u(t)\big) = - \norm{ u'(t) }_X^2 = - \norm{ \partial \varphi\big(u(t)\big) }^2_X, \qquad \text{ for } \mathcal L^1\text{-a.e. } t > 0,
\end{equation}
$\norm{ (\partial \varphi)\circ u }_X$ being the minimal norm of the elements in $ \varphi\big(u(\cdot)\big)$.
Moreover, every solution $u(\cdot)$ arises from the locally-uniform limit of the discrete approximations obtained by the \emph{implicit Euler scheme} (also: \emph{proximal point method})
\begin{equation}\label{eq:Untau}
U^n_\tau = J_\tau \big( U^{n-1}_\tau \big), \quad n \in \N, \quad \text{ with } U^0_\tau = u_0,
\end{equation}
where the resolvent map $J_\tau \colon X \to X$ is defined by setting
\[
U = J_\tau (V) \quad \overset{\text{def}}{\iff} \quad \dfrac{U - V}{\tau} \in -\partial \varphi(U).
\]
For any sufficiently small time step $\tau > 0$, \eqref{eq:Untau} recursively defines a sequence $\{ U^n_\tau \}_{n \in \N}$, which in turns induces a piecewise constant interpolant
\begin{equation}\label{eq:Ubar}
\bar U_\tau (t) := U^n_\tau = J^n_\tau(u_0) \text{ if } t \in \big((n-1) \tau, n \tau\big],
\end{equation}
converging to the solution $u$ of \eqref{eq:difincl} when $\tau \downarrow 0$.

Following the pioneering work by De Giorgi, Degiovanni, Marino, and Tosques \cite{DeGMaTo, DeMaTo}, this theory has evolved in two significant ways: firstly by relaxing the convexity requirements on $\varphi$ (see e.g. \cite{RoSa, MiRoSa}), and secondly by expanding the framework from Hilbert spaces to Banach spaces, or, even more generally, to metric and topological spaces \cite{DeG}. We also refer to \cite{Sant, DaSa} for more recent overviews. In particular, we remark that the foundational work by De Giorgi and his collaborators effectively integrates both of these developments.

As for the second direction, we recall that there are several different formulations of gradient flows in a metric space $(X, \di)$. The first one we mention, introduced in \cite{DeGMaTo}, takes inspiration from \eqref{eq:EDIIntro} and serves as an equivalent characterization of \eqref{eq:difincl} in Hilbert spaces. The starting point consists in introducing the notion of \emph{metric derivative}
\[
|\dot u|(t) := \lim_{h \to 0} \dfrac{\di(u(t+h), u(t))}{|h|},
\]
which serves as a metric counterpart of the (scalar) velocity of a curve $u \colon [0, \infty) \to X $, and the one of \emph{descending slope}
\[
|D^- \varphi| (u) := \limsup_{z \to y} \dfrac{\big( f(z) - f(y)  \big)^-}{\di (z, y)},
\]
which plays the role of the norm of the (minimal selection in the) subdifferential. Hence we can introduce the notion of \emph{curve of maximal slope} as an absolutely continuous curve $u \colon [0, \infty) \to X$ satisfying the \emph{energy dissipation equality} (EDE)
\[
\frac{\de}{\de t } \varphi\big( u(t) \big) = |\dot u|^2(t) = - |D^- \varphi|^2\big(u(t)\big) \text{ for } \mathcal L^1\text{-a.e. } t > 0,
\]
which exactly provides the metric formulation of \eqref{eq:EDIIntro}.

The second approach is instead related to a variational formulation of \eqref{eq:Untau}, which can be stated in terms of the following variational condition
\begin{equation}\label{eq:VarUn}
U \in J_\tau(V) \quad \Longleftrightarrow \quad U \in \argmin_{W \in X} \dfrac{1}{2\tau} \di^2(V, W) + \varphi(W)
\end{equation}
whose minimizers define a multivalued map, still denoted by $J_\tau$.
A recursive selection of $U_\tau^n$ among the minimizers $J_\tau(U^{n-1})$ of \eqref{eq:VarUn} provides a useful algorithm to approximate gradient flows. In particular, pointwise limits (up to subsequences) as $\tau \downarrow 0$ of the interpolants $U_\tau$, defined through~\eqref{eq:Ubar},  are called \emph{Generalized Minimizing Movements}.

Finally, the most powerful notion of gradient flow is given by a metric version of \eqref{eq:difincl}, which can be formulated as a system of \emph{evolution variational inequalities} ($\evi$). In order to obtain this, we start observing that for any $w \in X$ it holds that
\[
\dfrac12 \frac{\de}{\de t} \norm{ u(t) - w }_X^2 = \langle u'(t),  u(t) - w \rangle_X
\]
and, in the case in which $\varphi$ is $K$-convex, using the properties of the subdifferential, we get that
\[
\dfrac12 \frac{\de}{\de t} \norm{ u(t) - w }_X^2 \le \varphi (w) - \varphi(u(t)) - \dfrac{K}{2} \norm{ u(t) - w }_X^2  \quad \text{ for every } w \in \mathcal D(\varphi).
\]
In metric spaces, one can seek curves $u \colon [0, \infty) \to \mathcal{D}(\varphi)$ that satisfy
\[
\dfrac12 \frac{\de}{\de t} \di^2(u(t), w) + \dfrac{K}{2} \di^2(u(t), w) \le \varphi (w) - \varphi(u(t))  \quad \text{ for every } w \in \mathcal D(\varphi).
\]
If solutions exist for all initial data $u_0 \in \mathcal D(\varphi)$, they generate a $K$-contracting semigroup $(S_t)_{t \ge 0}$ and this is known as the $\evi_K$ formulation of the gradient flow driven by $\varphi$, see \cite{AmbGigSav08}.\\

\paragraph{$(K,N)$-convexity and curvature-dimension conditions} The study of EVI gradient flows has played a central role in characterizing the synthetic curvature-dimension condition in the context of metric measure spaces.
In particular, it has been essential in linking the Eulerian and Lagrangian approaches within this theory.  A first step in this direction was taken in \cite{GKO13}, where the correct link between the ${\sf CD}(K, \infty)$ condition and the Bochner inequality was established in the setting of Alexandrov spaces. Building on the ideas introduced in that work, \cite{AGS14} subsequently showed that, in an $\mathsf{RCD}(K, \infty)$ space, the gradient flow of the relative entropy exists for suitable initial data and satisfies
the $\evi_K$ condition. In turn, this result allows one to deduce the Bakry--\'Emery formulation of the $\mathsf{RCD}(K, \infty)$ condition, and ultimately to derive the Bochner inequality in a weak sense.

The class of $(K, N)$-convex functions for $N > 0$, strengthening $K$-convexity, was first introduced in \cite{ErbarKuwadaSturm} in order to study the properties and the consequences of a new curvature-dimension condition, known as the \emph{entropic curvature-dimension condition} $\mathsf{CD}^e(K, N)$. In this setting, a function $f \colon I \to (-\infty, +\infty]$ on an interval $I \subset \R$ is $(K, N)$-convex if the inequality
\[
f'' \ge K + \dfrac{(f')^2}{N}
\]
holds in a distributional sense.
A function $F$ on a geodesic space $(X, \di)$ is called $(K, N)$-convex if, for every pair of points in $X$, there exists a unit-speed geodesic connecting them along which~$F$ is $(K, N)$-convex. 
With this definition, the $(K, N)$-convexity is a weak (nonsmooth) formulation of the inequality
\[
\text{Hess} F \ge K + \dfrac1N (\nabla F \otimes \nabla F).
\]
The analysis of $(K, N)$-convex functions and their gradient flows led to a new form of the Evolution Variation Inequality $\evikn$, also taking into account the effect of the dimension bound. The notion of $\evikn$ gradient flow yields new, sharp contraction estimates (more accurately: expansion bounds)  for the heat flow in the Wasserstein space.
In particular, this allows to conclude that the $\RCD^\ast(K, N)$ condition is equivalent to the existence of an $\evikn$ gradient flow of the entropy, in a suitable sense \cite[Thm.~3.17]{ErbarKuwadaSturm}, and reinforces the established result that characterizes $\RCD(K,\infty)$ spaces by the fact that the heat flow is an $\evi_K$ gradient flow of the Boltzmann-Shannon entropy functional \cite[Thm.~9.3(iii)]{AGSCalc}.
Moreover, we recall that  in \cite{AMS19}  the $\mathsf{CD}^\ast(K, N)$ condition is characterized via the validity of $\evi$ for suitable functionals related to McCann's class ${\rm DC}(N)$ (see \cite{McCann, Villani}).
Finally we refer to \cite{Sturm18} for the study of the $\evi_K$ gradient flows of the relative entropy functional in the Wasserstein space $(\mathcal{P}_2(X), W_2)$ over an $\RCD(K, \infty)$ space~$(X, \di, \mathfrak{m})$, and to \cite{MurSav} for a comprehensive analysis of the class of gradient flows within a metric space $(X, \di)$, which can be characterized by Evolution Variational Inequalities.\\

\paragraph{The case of negative dimension}
The study of $\evikn$-gradient flows for $(K,N)$-convex functions was extended to cases where the parameter 
$N$ takes negative values
 by Ohta in \cite{ohta}, motivated by \cite{OTI, OTII}. It is important to note that $(K,N)$-convexity for $N < 0$ is a weaker condition than $K$-convexity, thereby encompassing a broader class of functions. In \cite{ohta}, $(K,N)$-convex functions are defined, and their properties are explored, including the evolution variational inequality along gradient curves in a Riemannian context (see \cite[Lem. 2.3]{ohta}). Additionally, regularizing estimates are derived from the evolution variational inequality, and an expansion bound for gradient flows of \emph{Lipschitz} $(K,N)$-convex functions on Riemannian manifolds is established (see \cite[Theorem 3.8]{ohta}). However, the approach proposed in \cite{ohta} does not clearly establish whether this expansion bound can be obtained for general $(K,N)$-convex functions without the Lipschitz requirement, which in turn would ensure the uniqueness of 
$\evikn$ gradient curves also in the case $N < 0$.

\paragraph{Main results}
The main goal of this paper is to study the fundamental properties of $\evikn$ gradient curves  for negative values of the parameter $N$, associated with lower semicontinuous functionals defined on a metric space. In particular, in Section \ref{sec4} we prove the uniqueness (assuming existence) of 
$\evikn$ gradient curves starting from arbitrary points in the domain, while in Section \ref{sec5}, we carefully examine the relationship between the existence of
$\evikn$ gradient curves and the existence of curves of maximal slope that satisfy either the \emph{Energy Dissipation Equality} (EDE) or the \emph{Energy Dissipation Inequality} (EDI). The framework we adopt extends that of \cite{ohta} by defining $(K,N)$-convexity in a manner that includes functions taking the value $-\infty$.
This generalization is motivated by specific examples of $(K,N)$-convex functions defined on open intervals of 
$\R$, whose natural extensions to the boundary points take the value $-\infty$.
For instance,  for any $N < 0$ the function $f\colon x \mapsto -N\log x$ is $(0, N)$-convex in $(0, +\infty)$ and $\lim_{x \to 0} f(x) = -\infty$. In this case, the gradient flow of~$f$ satisfies~$\dot y_t= - f'(y_t) = -N/y_t$. Thus, for every starting point~$y_0\geq 0$, we have $y_t=\sqrt{y_0^2+2Nt}$ for $t\in \big[0,-\tfrac{y_0^2}{2N}\big)$ and~$y_t=0$ for~$t>-\tfrac{y_0^2}{2N}$.

This will be a crucial point when studying the $\evikn$-gradient flow of the Boltzmann entropy in ${\mathsf CD}^e(K, N)$-spaces, namely metric measure spaces satisfying the \emph{entropic curvature-dimension condition}.
Furthermore this example illustrates that we cannot generally expect a function~$\varphi$, $(K, N)$-convex for some~$N < 0$, to be coercive in the sense of \cite[Property 2.1b]{AmbGigSav08}, meaning that for each $\tau > 0$ and  $x \in X$ it holds
\[
\inf_{y \in X} \dfrac{1}{2\tau} \di^2(x, y) + \varphi(y) \equiv -\infty.
\]
The \emph{lack of coercivity}  is a delicate issue, as it prevents us from directly applying the standard theory of minimizing movement schemes to establish the existence of curves of maximal slope or EDI gradient flows for these functions.

Our analysis of the properties of gradient flows of $(K,N)$-convex functions surpasses the expansion bound established in \cite[Thm 3.8]{ohta}.
A key tool in this effort is the introduction of suitable \emph{reparametrization maps}.
These maps are central to our study, as they enable us to associate to each gradient curve for a function $f$ a gradient curve for the corresponding function $f_N\colon x \mapsto \exp(-f(x)/N)$ and vice versa, provided that $f$ is $(K, N)$-convex with $N < 0$.
This relationship is crucial, as demonstrated in Proposition \ref{prop:ftofN}, where we show that in the metric setting the $(K, N)$-convexity property of~$f$ can be lifted to a $\lambda$-convexity property of~$f_N$, for a suitable~$\lambda \in \R$. This generalizes a result proven in~\cite{ohta} in the smooth case.
Consequently, we can apply the well-studied properties of $\evi_K$
 gradient curves from~\cite{MurSav} to derive analogous results for~$\evikn$ gradient curves with $N < 0$.

\paragraph{Examples and outlook}
We conclude by recalling that the range of values for the dimension $N$ has been extended to include negative values also in the theory of $\mathsf{CD}(K, N)$ spaces, as presented in \cite{ohta}. 
This extension was motivated in the setting of weighted Riemannian manifolds by the work \cite{OTI,OTII}, where the convexity of a certain generalization of the relative entropy (inspired by information geometry, see~\cite[\S3.1]{OTI}) was characterized combining the condition $\text{Ric}_N \ge 0$ with the convexity of another weight function, with $N$ possibly taking negative values depending on the choice of entropy.
Indeed, the entropy functionals in~\cite{OTI} belong to the class of \emph{Bregman divergences}, which are dissimilarity functionals encoding dually flat Riemannian structures on spaces of probability densities, in the sense of information geometry (see e.g.~\cite[\S1.3.2, and p.~28]{Ama16}).

In fact, most of the results on this topic are obtained in the case of weighted Riemannian manifolds.
For example, in the Euclidean setting, a direct application of the results in \cite{BraLieb} ensures that given any convex measure~$\mu$ with full dimensional convex support and $C^2$ density $\Psi$, the space $(\R^n, \di_{\text{Eucl}}, \mu)$ satisfies the $\mathsf{CD}(0, N)$ condition for $1/N \in (-\infty, 1/n]$ (i.e., $N \in (-\infty, 0) \cup [n, \infty]$), in the sense that $\mathrm{Ric}_N \ge 0$. This class of measures, introduced by Borell in \cite{Borell}, extends the set of the so-called log-concave measures and it has been largely studied for example in \cite{Bobkov07, BobLed, Kol13}. In particular, following the terminology adopted by Bobkov, the case $N \in (-\infty, 0)$ corresponds to the ``heavy-tailed measures'' (see also \cite{BCR05}), identified by the condition that $1/ \Psi^{1/(n-N)}$ is convex. An explicit example of these measures is given by the family of Cauchy probability measures on $(\R^n, \di_{\text{Eucl}})$
\begin{equation}\label{eq:intro1}
\mu^{n, \alpha} = \dfrac{c_{n, \alpha}}{\big(1+\abs{x}^2\big)^{\frac{n + \alpha}{2}}} \, \de x, \quad \alpha > 0,
\end{equation}
where $c_{n, \alpha} > 0$ is a normalization constant.
It then follows that $(\R^n, \di_{\text{Eucl}}, \mu^{n, \alpha})$ is a $\mathsf{CD}(0, -\alpha)$ space.
Moreover, in \cite{Milman-NegSph} it is proved that the $n$-dimensional unit sphere~$\mathbb{S}^n$ equipped with the harmonic measure, namely the {hitting distribution}  by the {Brownian motion} started at $x \in \mathbb{S}^n$, (which can be equivalently described as the probability measure whose density is proportional to $\mathbb{S}^n \ni y \to 1 / | y - x |^{n+1}$) is a $\mathsf{CD}\big(n-1-(n+1)/4, -1 \big)$ space. More generally, Milman provides an analogue to the family \eqref{eq:intro1} of Cauchy measures in $\R^{n+1}$, showing that the family of probability measures on~$\mathbb{S}^n$ having density proportional to
\[
\mathbb{S}^n \ni y \mapsto \dfrac{1}{\abs{y-x}^{n + \alpha}} \quad \text{ for all } \abs{x} < 1,\ \alpha \ge -n, \text{ and } n \ge 2, 
\]
satisfies the curvature-dimension condition $\mathsf{CD}(n-1-\frac{n+\alpha}{4}, -\alpha)$. 

Following \cite{ohta}, the study of non-smooth $\mathsf{CD}(K, N)$ spaces has continued in \cite{MRS23,MR23}, where several important properties have been established. In particular, these works identify the class of metric measure spaces equipped with a \emph{quasi-Radon} measure as the natural framework to define the $\mathsf{CD}(K,N)$ condition.
Drawing an analogy with the positive-dimensional case $N>0$, our study of $\evikn$ gradient flow may provide a fundamental tool for bridging the Eulerian and Lagrangian approaches on lower curvature bounds. This, in turn, could yield a consistent definition of $\mathsf{RCD}(K,N)$ spaces for negative values of $N$, allowing the development of an analytic approach to the study of metric measure spaces equipped with singular (quasi-Radon) measures.

\section*{Acknowledgments}
Part of this work was conducted at \emph{Erwin Schr\"odinger International Institute for Mathematics and Physics (ESI)}, where all authors participated in the workshop ``Synthetic Curvature Bounds for Non-Smooth Spaces: Beyond Finite Dimensions''.
We express our sincere gratitude to ESI for providing an excellent working environment and for supporting this collaborative research. The authors express their gratitude to Karl-Theodor Sturm for many stimulating discussions.
The first author acknowledges support from the Italian Ministry of Education through the PRIN Department of Excellence MatMod@TOV (CUP: E83C23000330006).
The second author acknowledges support from the Royal Society through the Newton International Fellowship (award number: NIF$\backslash$R1$\backslash$231659). This research
was funded in part by the Austrian Science Fund (FWF),
Grants: \href{https://www.fwf.ac.at/en/research-radar/10.55776/ESP208}{DOI: 10.55776/ESP208}, 
\href{https://www.fwf.ac.at/en/research-radar/10.55776/ESP224}{DOI:10.55776/ESP224}, and
\href{https://www.fwf.ac.at/en/research-radar/10.55776/EFP6}{DOI: 10.55776/EFP6}]. For open access purposes, the authors have applied
a CC BY public copyright license to any author accepted manuscript
version arising from this submission.

\section{Preliminaries}\label{sec:2}
Everywhere in the following~$X$ is a non-empty set,~$\di$ is a distance on~$X$, and $\X\eqdef (X,\di)$ the resulting metric space, not necessarily complete.
A curve $\gamma\in C([0, 1], \X)$ is called \textit{geodesic} if 
\begin{equation}
    \di(\gamma(s), \gamma(t)) = |t-s| \cdot \di(\gamma(0), \gamma(1)) \quad \text{for every }s,t\in[0,1]\,.
\end{equation}
A metric space~$(X,\di)$ is called:
\begin{itemize}
\item \emph{strictly intrinsic} if for every pair of points~$x_0$,~$x_1$ in~$X$ there exists a geodesic curve~$\gamma\colon [0,1]\to \X$ with~$\gamma_0=x_0$ and~$\gamma_1=x_1$, cf.~\cite[Dfn.~2.1.20]{BurBurIva07};
\item \emph{geodesic} if it is complete and strictly intrinsic.
\end{itemize}
The necessity of considering non-complete spaces arise from the potential application to functionals on metric measure spaces satisfying the curvature-dimension condition~$\mathsf{CD}(K,N)$ for~$N<0$.

We shall work under the following standing assumption:

\begin{assumption}\label{a:Main}
$(X,\di)$ is a metric space satisfying
\begin{enumerate}[$(i)$]
\item\label{i:a:Main:1} $(X,\di)$ is strictly intrinsic;
\item\label{i:a:Main:2} the metric completion $(\overline X,\overline\di)$ of~$(X,\di)$ is a geodesic metric space.
\end{enumerate}
\end{assumption}

We note, if~$(X,\di)$ is additionally locally compact,~\ref{i:a:Main:2} in Assumption~\ref{a:Main} follows from~\ref{i:a:Main:1} by the Hopf--Rinow--Cohn-Vossen Theorem~\cite[Thm.~2.5.28]{BurBurIva07}.
We stress that if~$(X,\di)$ is not locally compact, this is not the case, cf.~\cite[Ex.~2.5.24]{BurBurIva07}.

\medskip

In the following, we will consider gradient curves of  a lower semicontinuous functional $f\colon \X \to \bar \R = [-\infty,+\infty]$. For such functionals we will denote by $\mathcal{D}[f] \subset X$ the (\emph{finiteness}) \emph{domain} of $f$, i.e. $\mathcal{D}[f] \eqdef  \{x\in X : f(x)\neq \pm\infty\}$, and by $\mathcal{D}^\star[f] \subset X$ the \emph{extended domain}, ${\mathcal{D}}^\star[f] \eqdef  \{x\in X : f(x)< +\infty\}$. We assume that $\mathcal D [f] \neq \emptyset$.

\begin{definition}[$\lambda$-convexity]\label{def:lambdaconv}
Fix~$\lambda \in \R$.
A function $f\colon X\to \bar R$ is said to be \emph{$\lambda$-convex} if for every $x_0,x_1 \in \mathcal{D}[f]$, there exists a geodesic $\gamma\colon [0,1]\to X$ connecting $x_0$ and $x_1$, such that
\begin{equation}\label{eq:lambdaconvinequality}
    f(\gamma_t) \leq (1-t) f(x_0) + t f(x_1) - \frac{\lambda}{2} t (1-t)\, \di(x_0,x_1)^2 \qquad  t\in [0,1].
\end{equation}
\end{definition}

On a smooth, connected and complete Riemannian manifold $(M,g)$, a $C^2$ function $f\colon M\to \R$ is $\lambda$-convex if and only if
\begin{equation}
    \hess f (v,v) \geq \lambda|v|^2 \qquad \text{for all }v\in TM.
\end{equation}
This condition can be reinforced by introducing a dimensional parameter. In particular, given any $K\in \R$ and $N>0$, a $C^2$ function $f\colon M\to \R$ is said to be \emph{$(K,N)$-convex} if
\begin{equation}\label{eq:KNdiff}
    \hess f (v,v) - \frac{\scal{\nabla f}{v}^2}{N}\geq K|v|^2 \qquad \text{for all }v\in TM.
\end{equation}
Observe that the classical $\lambda$-convexity can be seen as a degenerate case of the $(K,N)$-convexity when $K=\lambda$ and the parameter $N$ takes the value $+\infty$. Moreover, as it happens in the classical case with Definition \ref{def:lambdaconv}, the differential condition \eqref{eq:KNdiff} has a metric counterpart, which turns out to be equivalent in the smooth setting. We refer the reader to \cite{ErbarKuwadaSturm} for a comprehensive introduction to the theory of $(K,N)$-convex functions in metric spaces for~$N>0$.

\subsection{$(K,N)$-convex functions for $N < 0$}

As the differential inequality \eqref{eq:KNdiff} makes sense also when $N$ is negative, it is meaningful to extend the definition of $(K,N)$-convex function to the case $N<0$.
Given $K\in \R$ and $N<0$, a $C^2$~function $f\colon M\to \R$ on a smooth, connected and complete Riemannian manifold $(M,g)$ is $(K,N)$-convex if \eqref{eq:KNdiff} holds.
In particular, $(K,N)$-convexity with negative $N$ is weaker than any $(K,N)$-convexity for positive $N$ and it is also weaker than $K$-convexity (in the sense of Definition~\ref{def:lambdaconv}), cf.~Proposition~\ref{prop:properties} below.
The differential definition~\eqref{eq:KNdiff} can be equivalently rephrased in terms of the function
\begin{equation}\label{eq:fN}
    f_N(x)\coloneqq e^{-f(x)/N}.
\end{equation}
Indeed, $f$ is $(K,N)$-convex if and only if 
\begin{equation}\label{eq:KNdiff2}
    \hess f_N (v,v) \geq -\frac K N f_N(x)|v|^2 \qquad \text{for all }v\in T_x M,\, x\in M. 
\end{equation}

As in the case of positive $N$, it is possible to give a metric definition of $(K,N)$-convexity, which is equivalent to \eqref{eq:KNdiff2} in the smooth setting, and thus properly extends the notion of $(K,N)$-convexity to nonsmooth spaces also in the case~$N<0$. The definition for any $K\in \R$ and $N<0$ is given in terms of the following functions
\begin{equation*}
    \skn,\ckn\colon [0,+\infty) \to \R, 
\end{equation*}
\begin{equation*}
    \skn (\theta)\eqdef  \begin{cases}
        \frac{1}{\sqrt{K/N}} \sin(\theta\sqrt{K/N}) &\text{if }K<0,\\
        \theta &\text{if }K=0,\\
         \frac{1}{\sqrt{-K/N}} \sinh(\theta\sqrt{-K/N}) &\text{if }K>0,\\
    \end{cases} \qquad \ckn (\theta)\eqdef  \begin{cases}
        \cos(\theta\sqrt{K/N}) &\text{if }K<0,\\
        1 &\text{if }K=0,\\
          \cosh(\theta\sqrt{-K/N}) &\text{if }K>0,\\
    \end{cases}
\end{equation*}
which are used to define the distortion coefficients
\begin{equation}\label{E:sigma}
\sigma_{K,N}^{(t)}(\theta)\eqdef  
\begin{cases}
\displaystyle  \frac{\skn (t\theta)}{\skn (\theta)} & \textrm{if}\  N\pi^{2} < K\theta^{2} <  0 \text{ or }K\theta^{2} > 0, \\
t & \textrm{if}\  
K \theta^{2}=0,  \crcr
\infty, & \textrm{if}\ K\theta^{2} \leq N\pi^{2}, \crcr
\end{cases}
\qquad \theta \in [0,\infty), \qquad t\in[0,1] ,
\end{equation}
and compute their derivatives.

These distortion coefficients have the following monotonicity properties (see \cite{Sturm06II}): for every fixed~$t\in [0,1]$ it holds
\begin{equation}\label{eq:monotonicity}
\begin{aligned}
    K&\longmapsto \sigma_{K,N}^{(t)}(\theta) \text{ is non-increasing}
    \\
    N&\longmapsto \sigma_{K,N}^{(t)}(\theta) \text{ is non-decreasing},
\end{aligned}
\qquad {\text{for each } \theta \in \begin{cases} 
[0,\pi\sqrt{N/K}) & \text{ if $K<0$,}
\\
[0,\infty) &\text{ if $K\geq 0$}.
\end{cases}
}
\end{equation}

\begin{definition}[$(K,N)$-convexity]\label{def:KNconv}
Fix~$K \in \R$ and $N <0$.
A function $f\colon X\to  \bar{\R}$ is said to be \emph{$(K,N)$-convex} if for every $x_0,x_1 \in {\mathcal{D}}^\star[f]$, with $\di\eqdef \di(x_0,x_1)<\pi\sqrt{N/K}$ when $K<0$, there exists a geodesic $\gamma\colon [0,1]\to X$ connecting $x_0$ and $x_1$, such that
\begin{equation}\label{eq:convinequality}
    f_N(\gamma_t) \leq \sigma_{K,N}^{(1-t)}(\di) f_N(x_0) + \sigma_{K,N}^{(t)}(\di) f_N(x_1), \qquad t\in [0,1],
\end{equation}
where we recall that $f_N(x)=e^{-f(x)/N}$, and, conventionally, we set~$0\cdot \infty = 0$.
\end{definition}

\begin{remark}
    Compared to the first (metric) definition of the $(K,N)$-convexity for $N < 0$, given by Ohta in \cite{ohta}, in Definition \ref{def:KNconv} we allow $(K,N)$-convex functions to attain the value $-\infty$. This has already been done in \cite{MRS23} and it was motivated by some examples of $(K,N)$-convex functions on bounded segments in $\R$, converging to $-\infty$ at the endpoints, cf.\ Example 2.4 in~\cite{ohta}.
\end{remark}

\begin{remark}
Let~$f\colon X\to\bar\R$ be $(K,N)$-convex for some~$K\in\R$ and~$N<0$, and denote by $\overline{f}\colon \overline{X}\to\bar\R$ the extension of~$f$ taking value~$+\infty$ on~$\overline{X}\setminus X$.
We may extend~\eqref{eq:convinequality} to~$\overline{f}$ and to every~$x_0,x_1\in\overline{X}$.
Indeed, since~$\overline{\X}$ is a geodesic space by Assumption~\ref{a:Main}, for every~$x_0,x_1\in\overline{X}$ there exists a geodesic~$\gamma\colon [0,1]\to\overline{X}$ connecting them.
If~$x_i\in\overline{X}\setminus \mathcal{D}^\star[\overline{f}]= \overline{X}\setminus\mathcal{D}^\star[f]$ for either~$i=0$ or~$i=1$, then~\eqref{eq:convinequality} holds trivially with~$\overline{f}$ in place of~$f$ since~$\overline{f}_N(x_i)=+\infty$.
If otherwise both~$x_0,x_1\in \mathcal{D}^\star[\overline{f}]=\mathcal{D}^\star[f]$, then~\eqref{eq:convinequality} for~$\overline{f}$ holds by~\eqref{eq:convinequality} for~$f$ since~$\overline{f}=f$ on~$\mathcal{D}^\star[\overline{f}]$.
In particular, this shows that if~$f\colon X\to\bar\R$ is $(K,N)$-convex, then so is its extension~$\overline{f}\colon \overline{X}\to\bar\R$.
\end{remark}

The following proposition lists some properties of $(K,N)$-convex functions, which were proved by Otha in \cite[Lem.s~2.6, 2.9]{ohta}.

\begin{prop}\label{prop:properties}
    Let $f\colon X\to  \bar{\R}$.
    If~$f$ is $K$-convex for some $K \in \R$, then it is $(K,N)$-convex for every $N<0$.
    If~$f$ is $(K,N)$-convex for some $K \in \R$ and $N <0$, then
    \begin{enumerate}[$(i)$]
        \item for every $a\in \R$, the function $f+a$ is $(K,N)$-convex;
        \item for every $c>0$, the function $cf$ is $(cK,cN)$-convex;
        \item $f$ is $(K',N')$-convex for every $K'\leq K$ and $N'\in [N,0)$;
    \end{enumerate}
\end{prop}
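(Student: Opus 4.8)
The plan is to reduce every assertion to the one-dimensional chord inequality \eqref{eq:convinequality} read along a single witnessing geodesic, and to control it by a comparison principle for the linear ODE whose solutions are the interpolations on the right-hand side of \eqref{eq:convinequality}. Concretely, for $a,b\in[0,\infty)$ and an \emph{admissible} distance $\di$ (meaning $\di<\pi\sqrt{N/K}$ when $K<0$, and $\di$ arbitrary when $K\geq 0$), set
\[
R_{K,N}^{a,b}(t)\eqdef \sigma_{K,N}^{(1-t)}(\di)\,a+\sigma_{K,N}^{(t)}(\di)\,b,\qquad t\in[0,1].
\]
By the explicit form of $\skn$, this is exactly the solution on $[0,1]$ of $w''=-\tfrac{K}{N}\di^2\,w$ with $w(0)=a$, $w(1)=b$. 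The workhorse is the following comparison: if $h\colon[0,1]\to[0,\infty)$ is continuous and satisfies $h''\geq -\tfrac{K}{N}\di^2\,h$ (distributionally; in all applications $h$ is smooth where positive), with $\di$ admissible, then $h\leq R_{K,N}^{h(0),h(1)}$ on $[0,1]$. Indeed, $w\eqdef h-R_{K,N}^{h(0),h(1)}$ vanishes at the endpoints and satisfies $w''+\tfrac{K}{N}\di^2 w\geq 0$; admissibility forces $\tfrac{K}{N}\di^2<\pi^2$, the first Dirichlet eigenvalue of $-\tfrac{d^2}{dt^2}$ on $[0,1]$, so the operator $\tfrac{d^2}{dt^2}+\tfrac{K}{N}\di^2$ has nonnegative Green's function and $w\leq 0$ follows.

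Granting this, the opening statement is quick. Assume $f$ is $K$-convex and let $x_0,x_1\in\mathcal{D}[f]$ with admissible $\di$, with $\gamma$ realizing \eqref{eq:lambdaconvinequality}. Applying the increasing map $r\mapsto e^{-r/N}$ (recall $N<0$) to \eqref{eq:lambdaconvinequality}, the smooth function $h(t)\eqdef \exp\!\big(-\tfrac1N[(1-t)f(x_0)+tf(x_1)-\tfrac K2 t(1-t)\di^2]\big)$ dominates $f_N\circ\gamma$, has endpoint values $f_N(x_0),f_N(x_1)$, and a logarithmic differentiation gives $h''/h=-\tfrac KN\di^2+(h'/h)^2\geq -\tfrac KN\di^2$. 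The comparison then yields $f_N(\gamma_t)\leq h(t)\leq R_{K,N}^{f_N(x_0),f_N(x_1)}(t)$, which is \eqref{eq:convinequality}. Items $(i)$ and $(ii)$ are purely algebraic and require no comparison: for $(i)$ one has $(f+a)_N=e^{-a/N}f_N$ and $\mathcal{D}^\star[f+a]=\mathcal{D}^\star[f]$, so \eqref{eq:convinequality} for $f+a$ is \eqref{eq:convinequality} for $f$ multiplied by the positive constant $e^{-a/N}$ along the same geodesic; for $(ii)$ a direct check gives $\mathfrak s_{cK,cN}=\skn$ (the arguments $cK/cN=K/N$ and $-cK/cN=-K/N$ are unchanged), hence $\sigma_{cK,cN}^{(t)}=\sigma_{K,N}^{(t)}$ and the admissible range $\pi\sqrt{cN/cK}=\pi\sqrt{N/K}$ is unchanged, while $(cf)_{cN}=e^{-cf/cN}=f_N$ and $\mathcal{D}^\star[cf]=\mathcal{D}^\star[f]$, so the $(cK,cN)$-inequality for $cf$ coincides with the $(K,N)$-inequality for $f$.

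For $(iii)$ I would vary the two parameters separately and compose. Fix $N$ and take $K'\leq K$: then $-\tfrac KN\geq -\tfrac{K'}N$ (since $K'\leq K$ and $N<0$), so $R\eqdef R_{K,N}^{f_N(x_0),f_N(x_1)}\geq 0$ satisfies $R''=-\tfrac KN\di^2 R\geq -\tfrac{K'}N\di^2 R$, i.e.\ it is a supersolution of the $(K',N)$-equation; as its endpoint values agree with those of $R_{K',N}^{f_N(x_0),f_N(x_1)}$ and $\di<\pi\sqrt{N/K'}\leq\pi\sqrt{N/K}$ on the pairs where the $(K',N)$-inequality is required, the comparison gives $f_N\circ\gamma\leq R\leq R_{K',N}^{f_N(x_0),f_N(x_1)}$ (equivalently, this is the $K$-monotonicity in \eqref{eq:monotonicity} together with $f_N\geq 0$). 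Fix $K$ and take $N\leq N'<0$: set $p\eqdef N/N'\geq 1$, so $f_{N'}=(f_N)^{p}$. Applying $r\mapsto r^{p}$ to \eqref{eq:convinequality} gives $f_{N'}\circ\gamma\leq R^{\,p}$, and using $R''=-\tfrac KN\di^2 R$ together with $-\tfrac{K}{N'}=-\tfrac KN\,p$,
\[
(R^{\,p})''=p(p-1)R^{\,p-2}(R')^2-\tfrac{K}{N'}\di^2 R^{\,p}\geq -\tfrac{K}{N'}\di^2 R^{\,p},
\]
since $p\geq 1$ and $R\geq 0$; as $R^{\,p}$ carries the endpoint values $f_{N'}(x_0),f_{N'}(x_1)$ and $\di<\pi\sqrt{N'/K}\leq\pi\sqrt{N/K}$, the comparison yields $f_{N'}\circ\gamma\leq R^{\,p}\leq R_{K,N'}^{f_{N'}(x_0),f_{N'}(x_1)}$, which is $(K,N')$-convexity.

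The main obstacle is the bookkeeping of the admissibility thresholds, rather than any single computation: one must check that whenever the target inequality is required (for $\di<\pi\sqrt{N'/K'}$ when $K'<0$) the distance lies simultaneously in the range where the source inequality is available and where the maximum principle applies, and that these ranges are nested correctly as $(K,N)$ vary — which is exactly what the inequalities $\pi\sqrt{N/K'}\leq\pi\sqrt{N/K}$ and $\pi\sqrt{N'/K}\leq\pi\sqrt{N/K}$ provide. The genuinely new point compared with the smooth treatment of \cite{ohta} is to allow $f_N$ to vanish, i.e.\ $f=-\infty$, at the endpoints: then $R$ and $R^{\,p}$ may vanish at $t\in\{0,1\}$ and $R^{\,p-2}$ is singular there for $p<2$, so the supersolution inequality and the comparison must be justified on the open interval $(0,1)$ and extended by continuity, using that in the admissible range $R>0$ on $(0,1)$ unless both endpoint values vanish — in which case all inequalities are trivial under the convention $0\cdot\infty=0$. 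For the opening statement one must additionally pass from $x_0,x_1\in\mathcal{D}[f]$ (where $K$-convexity is formulated) to $\mathcal{D}^\star[f]$; since a $K$-convex function is bounded below along geodesics between points of $\mathcal{D}[f]$, the extension to $\mathcal{D}^\star[f]\setminus\mathcal{D}[f]$ is obtained by a limiting argument combining lower semicontinuity of $f$ with the continuity of $R_{K,N}^{a,b}$ in $(a,b)$.
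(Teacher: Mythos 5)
Your ODE-comparison mechanism is sound, and it correctly disposes of items $(i)$--$(iii)$: the identities $(f+a)_N=e^{-a/N}f_N$ and $(cf)_{cN}=f_N$ with $\sigma_{cK,cN}^{(t)}=\sigma_{K,N}^{(t)}$ are exactly right, and the two supersolution computations for $(iii)$ — including the check $p=N/N'\geq 1$, the nesting of the admissibility thresholds $\pi\sqrt{N/K'}\leq\pi\sqrt{N/K}$ and $\pi\sqrt{N'/K}\leq\pi\sqrt{N/K}$, and the degeneracy of $R^{p-2}$ when an endpoint value vanishes — are all handled correctly. The paper offers no argument of its own here (it cites Ohta's Lemmas 2.6 and 2.9, whose proofs manipulate the coefficients $\sigma_{K,N}^{(t)}$ and the convexity of $r\mapsto r^{p}$ directly), so your maximum-principle packaging is a legitimate and essentially equivalent alternative; it also recovers the monotonicity \eqref{eq:monotonicity} as a byproduct.

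The genuine gap is in the opening implication, precisely at the step you defer to ``a limiting argument''. $K$-convexity (Definition~\ref{def:lambdaconv}) constrains $f$ only on pairs in $\mathcal{D}[f]$, whereas $(K,N)$-convexity (Definition~\ref{def:KNconv}) demands \eqref{eq:convinequality} for all pairs in $\mathcal{D}^\star[f]$, which in this paper's framework may contain points where $f=-\infty$, i.e.\ $f_N=0$. No limiting argument can bridge this: lower semicontinuity gives $f_N(\gamma_t)\leq\liminf_n f_N(\gamma^n_t)$, which is the wrong direction to pass the bounds $f_N(\gamma^n_t)\leq R_{K,N}^{f_N(x_0^n),f_N(x_1)}(t)$ to the limit unless one also knows $f_N(x_0^n)\to 0$, and approximating points in $\mathcal{D}[f]$ need not even exist near $x_0$. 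In fact the implication fails as literally stated: on $X=[0,1]$ let $f(0)=-\infty$ and $f\equiv 0$ on $(0,1]$; this $f$ is lower semicontinuous and $0$-convex (the defining inequality is only imposed for endpoints in $(0,1]$, along segments on which $f$ vanishes), yet \eqref{eq:convinequality} with $K=0$, $x_0=0$, $x_1=1$ would require $1=f_N(\gamma_t)\leq t$ for all $t\in(0,1)$. So your argument is complete only for pairs in $\mathcal{D}[f]\times\mathcal{D}[f]$ — which is all that can be true, and is presumably what is intended, since in the cited source \cite{ohta} functions are $(-\infty,+\infty]$-valued and $\mathcal{D}[f]=\mathcal{D}^\star[f]$. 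You should state the first claim under that restriction rather than promise an extension that cannot be carried out.
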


\begin{example}[{\cite[Ex.~2.4]{ohta}}]\label{ex:KNconvex}
Some examples of~$(K,N)$-convex functions with~$N<0$, and their domains, are as follows:
\begin{enumerate}[$(a)$]
\item\label{i:ex:KNconvex:1} for~$K>0$, the function $x\mapsto -N\log \tparen{\cosh(x\sqrt{-K/N})}$ on~$\R$;
\item\label{i:ex:KNconvex:2} for~$K>0$, the function $x\mapsto -N\log \tparen{\sinh(x\sqrt{-K/N})}$ on~$(0, +\infty)$;
\item\label{i:ex:KNconvex:3} for~$K=0$, the function~$x\mapsto -N\log x$ on~$(0, +\infty)$;
\item\label{i:ex:KNconvex:4} for $K<0$, the function~$x\mapsto -N\log\tparen{\cos(x\sqrt{K/N})}$ on~$\tparen{-\frac{\pi}{2}\sqrt{N/K},\frac{\pi}{2}\sqrt{N/K}}$.
\end{enumerate}
\end{example}

\begin{remark}[Lack of coercivity]\label{r:NoCoercive}
For~$N<0$, a~$(K,N)$-convex function need \emph{not} be coercive in any reasonable sense, irrespectively of the sign of~$K$.
For example, the functions in Example~\ref{ex:KNconvex}\ref{i:ex:KNconvex:2}--\ref{i:ex:KNconvex:4} are not coercive in the sense of~\cite[Eqn.~(2.1b), p.~43]{AmbGigSav08}, and their (proper) sublevel sets~$\{f\leq k\}\cap \mathcal D [f]$ are not even complete for any~$k\in\R$.

This fact has important implications concerning the possible (non-)existence of curves of maximal slope or \eqref{eq:edi} gradient flows for~$(K,N)$-convex functions, see Dfn.~\ref{d:EDI} below.
Indeed, since coercivity fails, one cannot apply the standard theory of minimizing movement schemes.
\end{remark}

In the smooth setting, $(K,N)$-convexity is a local property, since it is characterized by a pointwise differential inequality. The following proposition shows that the same holds in the nonsmooth setting.

\begin{prop}\label{prop:gluing}
    Fix $K\in \R$, $N<0$ and let $a,b,c,d \in \R$ such that $a<b<c<d$. Assume that a function $f\colon [a,d]\to \bar \R$ is $(K,N)$-convex when restricted to $[a,c]$ and to $[b,d]$, then it is $(K,N)$-convex on the whole interval $[a,d]$.
\end{prop}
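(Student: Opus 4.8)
The plan is to translate the statement entirely into the language of the reparametrisation $g\eqdef f_N=e^{-f/N}$. On an interval the geodesic between $x_0<x_1$ is the segment, and since $\skn$ and $\ckn$ are the two fundamental solutions of $h''+\tfrac{K}{N}h=0$ and $\sigma_{K,N}^{(t)}(\di)=\skn(t\di)/\skn(\di)$, the inequality \eqref{eq:convinequality} says exactly that $g$ lies below the unique solution $h_{[x_0,x_1]}$ of $h''+\tfrac{K}{N}h=0$ agreeing with $g$ at the two endpoints (this is the metric reading of \eqref{eq:KNdiff2}). Thus $f$ is $(K,N)$-convex on an interval $J$ if and only if, for all $x_0<x_1$ in $\mathcal{D}^\star[f]\cap J$ with $x_1-x_0<\pi\sqrt{N/K}$ when $K<0$, one has $g\le h_{[x_0,x_1]}$ on $[x_0,x_1]$. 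A first consequence, used below, is that this forces $\mathcal{D}^\star[f]\cap J$ to be order-convex: between two points where $g$ is finite, $g$ is dominated by the finite continuous function $h_{[x_0,x_1]}$, hence finite.

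I would then fix a pair $x_0<x_1$ in $\mathcal{D}^\star[f]\cap[a,d]$, subject to the length restriction when $K<0$, and verify $g\le h_{[x_0,x_1]}$. If both endpoints lie in $[a,c]$, or both in $[b,d]$, the inequality is immediate from the hypothesis. The only genuine case is the \emph{crossing} one, $x_0\in[a,b)$ and $x_1\in(c,d]$, in which $[x_0,x_1]\supseteq[b,c]$ and neither subinterval contains the whole segment. Here the two subintervals $[x_0,c]\subseteq[a,c]$ and $[b,x_1]\subseteq[b,d]$ cover $I\eqdef[x_0,x_1]$ and overlap on $[b,c]\ne\emptyset$.

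On $I$ I would fix a solution $\psi>0$ of $\psi''+\tfrac{K}{N}\psi=0$: for $K\ge0$ the equation has the positive solutions $1$ and $\ckn$, while for $K<0$ the length bound $x_1-x_0<\pi\sqrt{N/K}$ places $I$ strictly below the first conjugate distance of the oscillatory equation, so that a positive solution is available on $I$. Performing the classical Sturm change of variables $\tau(x)\eqdef\int_{x_0}^{x}\psi^{-2}$ and $v\eqdef g/\psi$ turns $h''+\tfrac{K}{N}h=0$ into $\ddot v=0$; consequently the comparison inequality $g\le h_{[y_0,y_1]}$ for a pair $y_0<y_1$ in $I$ is equivalent to $v$ lying below its affine interpolant in the variable $\tau$. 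Reading the hypothesis through this transform, $v$ becomes convex, in the ordinary sense and in the variable $\tau$, on $\tau([x_0,c])$ and on $\tau([b,x_1])$, two intervals overlapping on $\tau([b,c])$. Gluing ordinary convexity across an overlap is then routine (monotonicity of one-sided difference quotients: a function convex on two overlapping intervals is convex on their union), and transforming back yields $g\le h_{[x_0,x_1]}$ on $I$, i.e.\ \eqref{eq:convinequality} for the pair $(x_0,x_1)$.

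The step I expect to be the main obstacle is the bookkeeping of the values $\pm\infty$ in the crossing case. Where $f=-\infty$ one has $g=v=0$, which is harmless for convexity; the delicate point is that the points where $f=+\infty$ (outside $\mathcal{D}^\star[f]$) must not create a barrier strictly inside the segment, since there $h_{[x_0,x_1]}$ is finite while $g=+\infty$. This is exactly where the order-convexity of $\mathcal{D}^\star[f]\cap[a,c]$ and of $\mathcal{D}^\star[f]\cap[b,d]$ must be combined across the overlap $[b,c]$ to conclude that $\mathcal{D}^\star[f]$ is order-convex along $[x_0,x_1]$; only then is $v$ finite on a genuine interval and the convexity-gluing legitimate. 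Handling this overlap carefully (in particular ruling out a finite–infinite–finite configuration straddling $[b,c]$) is the crux on which the whole argument rests, the remaining reductions being the transparent Sturm transform and the elementary gluing of convex functions.
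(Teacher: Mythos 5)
Your route is genuinely different from the paper's. The paper argues by brute force on the distortion coefficients: it writes \eqref{eq:convinequality} for the triple $(a,b,c)$ and for $(b,c,d)$, eliminates $f_N(b)$, and uses the product-to-sum identity $\skn(c-a)\skn(d-b)-\skn(b-a)\skn(d-c)=\skn(d-a)\skn(c-b)$ to recognise the surviving coefficients as $\sigma_{K,N}^{(\frac{d-c}{d-a})}(d-a)$ and $\sigma_{K,N}^{(\frac{c-a}{d-a})}(d-a)$, then delegates the passage from this four-point inequality to full $(K,N)$-convexity to \cite[Lem.~3.10]{MRS23}. Your Sturm transform ($\tau=\int\psi^{-2}$, $v=f_N/\psi$ for a positive solution $\psi$ of $h''+\tfrac KN h=0$, available on intervals shorter than $\pi\sqrt{N/K}$ when $K<0$) is a cleaner and more conceptual substitute: the reading of \eqref{eq:convinequality} as domination by the two-point solution of the ODE, the reduction to the crossing case, and the reduction to gluing ordinary convexity on overlapping intervals are all correct, and they make the trigonometric identity and the external lemma unnecessary. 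For finite-valued $f$ this is a complete and arguably preferable proof.

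However, the step you yourself label ``the crux on which the whole argument rests'' --- ruling out a finite--infinite--finite configuration straddling $[b,c]$ --- is not carried out, and it cannot be: it is a genuine obstruction, not a bookkeeping chore. Take $K=0$, pick $p\in(a,b)$ and $q\in(c,d)$, and set $f\equiv 0$ on $[a,p]\cup[q,d]$ and $f\equiv+\infty$ on $(p,q)$. Then $\mathcal D^\star[f]\cap[a,c]=[a,p]$ and $\mathcal D^\star[f]\cap[b,d]=[q,d]$, so Definition~\ref{def:KNconv} only tests pairs along which $f_N\equiv 1$, where $1\le\sigma_{0,N}^{(1-t)}+\sigma_{0,N}^{(t)}=1$ holds; yet for the crossing pair $(p,q)$ the interpolant is finite while $f_N=+\infty$ in between, so $f$ is not $(0,N)$-convex on $[a,d]$. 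In other words, order-convexity of $\mathcal D^\star[f]$ along $[x_0,x_1]$ does not follow from the two local hypotheses, because the two order-convex pieces need not meet across $[b,c]$. (The paper's own computation tacitly assumes the same thing: the algebraic elimination of $f_N(b)$ and $f_N(c)$ in \eqref{eq:sigmaaprofusione} is only legitimate when these values are finite; in the intended application, Proposition~\ref{prop:ftofN}\ref{i:p:ftofN:2}, $f$ is bounded above, so the issue does not arise.) To close your argument you must either add a hypothesis guaranteeing that $\mathcal D^\star[f]$ meets $[b,c]$ (e.g.\ $f<+\infty$ on $[a,d]$) or restrict the claim accordingly; as written, the crux remains open and is in fact false in the stated generality.
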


\begin{proof}
    By assumption, we have that 
    \begin{equation}
         f_N(b)\leq \sigma_{K,N}^{(\frac{c-b}{c-a})}(c-a) f_N(a) + \sigma_{K,N}^{(\frac{b-a}{c-a})}(c-a) f_N(c),
    \end{equation}
    \begin{equation}
         f_N(c)\leq \sigma_{K,N}^{(\frac{d-c}{d-b})}(d-b) f_N(b) + \sigma_{K,N}^{(\frac{c-b}{d-b})}(d-b) f_N(d).
    \end{equation}
    Combining these two inequalities, we deduce that 
    \begin{equation}\label{eq:sigmaaprofusione}
        f_N(c)\leq \frac{\sigma_{K,N}^{(\frac{c-b}{c-a})}(c-a) \, \sigma_{K,N}^{(\frac{d-c}{d-b})}(d-b)}{1-\sigma_{K,N}^{(\frac{b-a}{c-a})}(c-a) \, \sigma_{K,N}^{(\frac{d-c}{d-b})}(d-b)} f_N(a) + \frac{ \sigma_{K,N}^{(\frac{c-b}{d-b})}(d-b)}{1-\sigma_{K,N}^{(\frac{b-a}{c-a})}(c-a) \, \sigma_{K,N}^{(\frac{d-c}{d-b})}(d-b)} f_N(b).
    \end{equation}
    On the other hand, for $K\neq 0$ we can use the product-to-sum identities for trigonometric and hyperbolic functions to obtain
    \begin{equation}
        \begin{split}
            \skn(c-a)\,\skn(d-b) - &\skn(b-a)\,\skn(d-c)
            \\
            &=\frac{\text{sgn}(K)}{2\sqrt{|K/N|}} \braket{ \ckn(b+d-a-c) - \ckn(c+d-a-b) }
            \\
            &= \skn(d-a)\,\skn(c-b).
        \end{split}
    \end{equation}
    The same identity holds by explicit computation also for $K=0$, indeed 
    \begin{equation}
        (c-a)(d-b) - (b-a)(d-c) =(d-a)(c-b).
    \end{equation}
    As a consequence, we deduce that 
    \begin{equation}
        \begin{split}
            1-\sigma_{K,N}^{(\frac{b-a}{c-a})}(c-a) \, \sigma_{K,N}^{(\frac{d-c}{d-b})}(d-b) &= \frac{\skn(c-a)\,\skn(d-b) - \skn(b-a)\,\skn(d-c)}{\skn(c-a)\,\skn(d-b)}\\
            &=\frac{\skn(d-a)\,\skn(c-b)}{\skn(c-a)\,\skn(d-b)}.
        \end{split}
    \end{equation}
    Since, by definition,
    \begin{equation}
        \sigma_{K,N}^{(\frac{c-b}{c-a})}(c-a) \, \sigma_{K,N}^{(\frac{d-c}{d-b})}(d-b)= \frac{\skn(c-b)\,\skn(d-c)}{\skn(c-a)\,\skn(d-b)} \quad \text{ and } \quad\sigma_{K,N}^{(\frac{c-b}{d-b})}(d-b)=\frac{\skn(c-b)}{\skn(d-b)},
    \end{equation}
    we deduce from \eqref{eq:sigmaaprofusione} that 
    \begin{equation}
        f_N(c)\leq \sigma_{K,N}^{(\frac{d-c}{d-a})}(d-a) f_N(a) +\sigma_{K,N}^{(\frac{c-a}{d-a})}(d-a) f_N(d).
    \end{equation}
    Finally, the thesis follows from Lemma 3.10 in \cite{MRS23}.
\end{proof}

Guided by the results of S.~Ohta in the smooth case, in the next proposition we investigate the convexity of~$f_N$ when~$f$
 is $(K,N)$-convex.
\begin{prop}\label{prop:ftofN}
    Let $(X, \di)$ be a metric space and $f\colon X \to \bar \R$ be a function on it.
    \begin{enumerate}[$(i)$]
        \item\label{i:p:ftofN:1} If $f$ is $(K,N)$-convex for some $K\geq 0$ and $N<0$, then $f_N=e^{-f/N}$ is convex.
        \item\label{i:p:ftofN:2} If $f$ is bounded above by $M$ and $(K,N)$-convex for some $K< 0$ and $N<0$, then $f_N=e^{-f/N}$ is $-\frac{K}{N} e^{-M/N}$-convex.
    \end{enumerate}
\end{prop}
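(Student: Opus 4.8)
The plan is to reduce both statements to one-dimensional comparisons between the distortion coefficients $\sigma_{K,N}^{(t)}$ and the affine weights $t,1-t$, exploiting that $f_N=e^{-f/N}$ is always nonnegative. First I would record the two soft facts that make Definition~\ref{def:KNconv} directly usable for testing convexity of $f_N$: since $f_N\geq0$ and $f_N(x)<+\infty$ iff $f(x)<+\infty$, the finiteness domain $\mathcal{D}[f_N]$ coincides with $\mathcal{D}^\star[f]$, so the pairs relevant for Definition~\ref{def:lambdaconv} applied to $f_N$ are exactly those to which \eqref{eq:convinequality} applies. Fixing $x_0,x_1\in\mathcal{D}^\star[f]$ (with $\ell\eqdef\di(x_0,x_1)<\pi\sqrt{N/K}$ when $K<0$) and the geodesic $\gamma$ realizing \eqref{eq:convinequality}, everything then comes down to bounding the coefficients on the right-hand side.

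For part~\ref{i:p:ftofN:1} I would show $\sigma_{K,N}^{(s)}(\ell)\leq s$ for all $s\in[0,1]$. When $K=0$ this is an equality and \eqref{eq:convinequality} is literally the convexity inequality. When $K>0$, writing $\alpha\eqdef\sqrt{-K/N}>0$, one has $\sigma_{K,N}^{(s)}(\ell)=\sinh(s\alpha\ell)/\sinh(\alpha\ell)$; since $\sinh$ is convex on $[0,\infty)$ and vanishes at $0$, $\sinh(s\alpha\ell)\leq s\sinh(\alpha\ell)$, giving the claim. Inserting $\sigma_{K,N}^{(1-t)}(\ell)\leq 1-t$ and $\sigma_{K,N}^{(t)}(\ell)\leq t$ into \eqref{eq:convinequality} and using $f_N(x_0),f_N(x_1)\geq0$ yields $f_N(\gamma_t)\leq(1-t)f_N(x_0)+tf_N(x_1)$, i.e.\ convexity of $f_N$. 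Note that for $K\geq0$ Definition~\ref{def:KNconv} carries no distance restriction, so this argument is valid for all pairs and there is no globalization issue.

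For part~\ref{i:p:ftofN:2}, with $K<0$, set $\beta\eqdef\sqrt{K/N}>0$ and $u\eqdef\beta\ell\in[0,\pi)$, so that $\sigma_{K,N}^{(s)}(\ell)=\sin(su)/\sin u$. By concavity of $\sin$ on $[0,\pi]$ one has $\sigma_{K,N}^{(s)}(\ell)\geq s$, so the excess coefficients $\sigma_{K,N}^{(s)}(\ell)-s$ are nonnegative; since $0\leq f_N\leq e^{-M/N}\eqdef C$, inequality \eqref{eq:convinequality} gives
\[
f_N(\gamma_t)-\tparen{(1-t)f_N(x_0)+t\,f_N(x_1)}\leq C\,\tbraket{\phi(t)-1}, \qquad \phi(t)\eqdef\sigma_{K,N}^{(1-t)}(\ell)+\sigma_{K,N}^{(t)}(\ell).
\]
The heart of the argument is the scalar bound $\phi(t)-1\leq\tfrac12 t(1-t)u^2$. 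Viewing $s\mapsto\sin(su)/\sin u$ as the solution of $h''=-u^2h$, one obtains $\phi''=-u^2\phi$ with $\phi(0)=\phi(1)=1$ and $\phi>0$ on $[0,1]$; hence $\phi$ is concave, and therefore $\phi\geq1$ on $[0,1]$. Setting $\psi(t)\eqdef1+\tfrac12 t(1-t)u^2$, the difference $w\eqdef\psi-\phi$ satisfies $w''=u^2(\phi-1)\geq0$ and $w(0)=w(1)=0$, so $w\leq0$, i.e.\ $\phi\leq\psi$. Recalling that $-\lambda/2=\beta^2 C/2$ with $\lambda=-\tfrac{K}{N}e^{-M/N}$, and that $u^2=\beta^2\ell^2$, this is exactly the $\lambda$-convexity inequality along $\gamma$ for points at distance $<\pi\sqrt{N/K}$.

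The step I expect to be the main obstacle is the globalization in the case $K<0$: Definition~\ref{def:KNconv} only constrains pairs with $\ell<\pi\sqrt{N/K}$ (the coefficient being $+\infty$ otherwise), whereas Definition~\ref{def:lambdaconv} demands the inequality for \emph{every} pair in $\mathcal{D}[f_N]$. I would close this gap by localization: the computation above shows that $f_N$ is $\lambda$-convex along every short geodesic, and $\lambda$-convexity of a one-dimensional profile is a local property, so the short-range estimate can be upgraded to the whole geodesic through the gluing mechanism of Proposition~\ref{prop:gluing} together with \cite[Lem.~3.10]{MRS23}, exactly as in the reduction to intervals employed there. The delicate point is that the geodesic provided by $(K,N)$-convexity on a subarc need not be a subarc of a prescribed geodesic; reconciling this ``there exists a geodesic'' quantifier across the two definitions is where the argument requires care.
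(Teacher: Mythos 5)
Part~\ref{i:p:ftofN:1} of your argument is correct and essentially the paper's proof: the paper invokes the monotonicity \eqref{eq:monotonicity} of $K\mapsto\sigma_{K,N}^{(t)}$ together with $\sigma_{0,N}^{(t)}=t$, while you verify $\sigma_{K,N}^{(s)}(\ell)\leq s$ directly from convexity of $\sinh$; either way the step is sound and, as you note, no globalization is needed for $K\geq0$.

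Part~\ref{i:p:ftofN:2}, however, contains a fatal sign error at its core. Your comparison function $w\eqdef\psi-\phi$ satisfies $w''=u^2(\phi-1)\geq0$ and $w(0)=w(1)=0$; a \emph{convex} function vanishing at the endpoints is $\leq0$, so $\psi\leq\phi$, i.e.\ $\phi\geq\psi$ --- the \emph{opposite} of the bound you need. This is not a repairable slip: the claimed inequality $\phi(t)-1\leq\tfrac12 t(1-t)u^2$ is genuinely false on $(0,\pi)$. For instance, at $t=\tfrac12$ one has $\phi(\tfrac12)=2\sin(u/2)/\sin u=1/\cos(u/2)\to+\infty$ as $u\uparrow\pi$, while $\psi(\tfrac12)=1+u^2/8$ stays bounded; a Taylor expansion shows $\phi(\tfrac12)-\psi(\tfrac12)=c\,u^4+O(u^6)$ with $c>0$, so the inequality fails even for arbitrarily small $u$. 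In other words, the exact constant $-\tfrac{K}{N}e^{-M/N}$ can never be extracted from a pointwise comparison of the distortion coefficients with the affine-plus-quadratic weights, not even along short geodesics. The paper circumvents precisely this obstruction: it proves only the \emph{approximate} bound $\sigma_{K,N}^{(t)}(\theta)-t\leq\tfrac16\tparen{\tfrac{K}{N}+\varepsilon}(t-t^3)\theta^2$ for $\theta\leq\bar\theta(\varepsilon)$, deduces $-\tbraket{\tfrac{K}{N}+\varepsilon}e^{-M/N}$-convexity along sufficiently short geodesics, globalizes for each fixed $\varepsilon$ via Proposition~\ref{prop:gluing}, and only then lets $\varepsilon\downarrow0$. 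Your closing paragraph correctly identifies the local-to-global step (and the quantifier issue over ``there exists a geodesic'') as delicate, but the $\varepsilon$-slack is not an optional refinement --- it is what makes the local estimate true at all, and the limit $\varepsilon\to0$ must be taken \emph{after} globalization, not built into the short-geodesic estimate.
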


\begin{proof}
    \ref{i:p:ftofN:1} Given any $x_0,x_1 \in X$, let $\gamma\colon [0,1]\to X$ be the constant speed geodesic connecting $x_0$ and $x_1$ and satisfying \eqref{eq:convinequality}, provided by Definition \ref{def:KNconv}. Then, for every $t\in [0,1]$ we have that 
    \begin{equation}
    f_N(\gamma_t) \leq \sigma_{K,N}^{(1-t)}(\di) f_N(x_0) + \sigma_{K,N}^{(t)}(\di) f_N(x_1)\leq (1-t) f_N(x_0) + t f_N(x_1),
\end{equation}
where the second inequality follows from \eqref{eq:monotonicity}, recalling that $\sigma_{0,N}^{(t)}(\di)=t$ and $\sigma_{0,N}^{(1-t)}(\di)=1-t$.

\medskip

\noindent \ref{i:p:ftofN:2} First of all, we observe that for every $t\in[0,1]$ it holds
\begin{equation}
    \sigma_{K,N}^{(t)}(\theta) = \frac{t \theta - \frac{1}{6} \frac{K}{N} t^3 \theta^3+ o (\theta^4) }{ \theta - \frac{1}{6} \frac{K}{N}  \theta^3+ o (\theta^4)}
\end{equation}
and therefore
\begin{equation}
    \sigma_{K,N}^{(t)}(\theta) -t = \frac{\frac{1}{6} \frac{K}{N} (t-t^3) \theta^3+ o (\theta^4) }{ \theta - \frac{1}{6} \frac{K}{N}  \theta^3+ o (\theta^4)}=  \frac{1}{6} \frac{K}{N} (t-t^3)  \theta^2 + o (\theta^3).
\end{equation}
Consequently, given any $\varepsilon>0$, there exists $\bar \theta$ such that for every $\theta\leq \bar \theta$ it holds 
\begin{equation}
    \sigma_{K,N}^{(t)}(\theta) -t\leq \frac{1}{6}\bigg[ \frac{K}{N} + \varepsilon\bigg] (t-t^3)  \theta^2 \quad \text{and} \quad \sigma_{K,N}^{(1-t)}(\theta) -t\leq \frac{1}{6}\bigg[ \frac{K}{N} + \varepsilon\bigg] \big(1-t-(1-t)^3\big)  \theta^2.
\end{equation}
At this point, consider any $x_0,x_1 \in X$ with $\di=\di(x,y)\leq \bar \theta$ and let $\gamma\colon [0,1]\to X$ be the constant speed geodesic connecting $x_0$ and $x_1$ and satisfying \eqref{eq:convinequality}, provided by Definition \ref{def:KNconv}. Then a direct computation gives
\begin{equation}
    \begin{split}
    f_N(\gamma_t) &\leq \sigma_{K,N}^{(1-t)}(\di) f_N(x_0) + \sigma_{K,N}^{(t)}(\di) f_N(x_1)\\
    &= (1-t) f_N(x_0) + t f_N(x_1)+ (\sigma_{K,N}^{(1-t)}(\di) - (1-t))f_N(x_0) + (\sigma_{K,N}^{(t)}(\di) -t) f_N(x_1),
    \end{split}
\end{equation}
and, taking into account the bound on $f$, we obtain
\begin{equation}
      f_N(\gamma_t) \leq (1-t) f_N(x_0) + t f_N(x_1)+ \big[(\sigma_{K,N}^{(1-t)}(\di) - (1-t)) + (\sigma_{K,N}^{(t)}(\di) -t) \big] e^{-M/N}.
      \end{equation}
Now, using the bounds on  $\sigma_{K,N}^{(t)}(\di) -t$ provided above, we get
\begin{equation}
    f_N(\gamma_t) \leq (1-t) f_N(x_0) + t f_N(x_1) + \frac{1}{2}\bigg[ \frac{K}{N} + \varepsilon\bigg] e^{-M/N} t(1-t) \di^2.
\end{equation}
This shows that $f_N$ is $-\big[ \frac{K}{N} + \varepsilon\big] e^{-M/N}$-convex along every sufficiently short geodesic, but according to Proposition \ref{prop:gluing} this is sufficient to conclude that $f_N$ is $-\big[ \frac{K}{N} + \varepsilon\big] e^{-M/N}$-convex. As $\varepsilon$ can be taken arbitrarily small, we conclude that $f_N$ is $-\frac{K}{N} e^{-M/N}$-convex.
\end{proof}

\begin{remark}
    We note that Proposition~\ref{prop:ftofN} gives the best uniform bounds we can expect, according to the smooth case, cf.~\eqref{eq:KNdiff}.
    For instance, this is the case in~\ref{i:p:ftofN:2} for the function $f\colon x\mapsto -N\log\tparen{\cos(x\sqrt{K/N})}$ in Example~\ref{ex:KNconvex}\ref{i:ex:KNconvex:4} with~$M\eqdef \max f =0$.
\end{remark}

\subsection{Gradient flows}\label{subsec2.2}

In this section we introduce the notion of $\evikn$ gradient curves and gradient flow for a lower semicontinuous functional on a metric space $(X,\di)$. As a starting point, we recall the definition of $\evi_\lambda$ gradient curve (and gradient flow), summarizing also the most relevant related facts.

In the following we will usually denote by $(y_t)_{t\in (0,T)}$ the curves in $X$ defined on $(0,T)$ and, given a function $f \colon I \to \R$ defined on some interval $I$, we denote by $\frac{\de^+}{\de t}f(t)$ the right derivative of $f$ at the point $t \in I$, namely
\[\frac{\de^+}{\de t}f(t) \eqdef \limsup_{h \downarrow 0} \dfrac{f(t+h) - f(t)}{h}.
\]
Moreover, we say that a curve $(y_t)_{t\in (0,T)}$ starts at $\bar y\in X$ if $\lim_{t\downarrow 0} y_t=\bar y$.

\begin{definition}[$\evi_\lambda$ gradient curve]
    Let $g\colon X \to \bar \R$ be a lower semicontinuous functional.
    Given $\lambda \in \R$, an \emph{$\evi_\lambda$ gradient curve for $g$} is a continuous curve $(z_t)_{t\in (0,T)} \subset \mathcal D[g]$, with $T\in (0,+\infty]$, such that 
    \begin{equation}\label{eq:EVIlambda}
        \frac{\de^+}{\de t} \frac{\di(z_t,z)^2}{2}   + \frac \lambda 2 \di(z_t,z)^2 \leq g(z)-g(z_t)
    \end{equation}
    for every $z\in \mathcal{D}[g]$ and every $t\in(0,T)$.
\end{definition}

The following result highlights an important property of $\evi_\lambda$ gradient curves, namely the so-called \emph{$\lambda$-contractivity}. As an immediate consequence, this implies the uniqueness of $\evi_\lambda$ gradient curves for a given initial condition.

\begin{prop}[$\lambda$-contractivity  and uniqueness of $\evi_\lambda$]\label{prop:EVI}
Let $g\colon X \to \bar \R$ be a proper lower semicontinuous functional and $\lambda \in \R$. Let $(z^1_t)_{t \in (0, T)}, (z^2_t)_{t \in (0, T)} \subset \mathcal{D}[g]$ be two $\evi_\lambda$ gradient curves for $g$. Then for every $0 < r < s < T$ it holds
\begin{equation}
\di \big(z^1_s, z^2_s\big)\le e^{-\lambda (s - r)} \di \big(z^1_r, z^2_r\big).
\end{equation}
In particular, for every $z_0 \in \overline{\mathcal{D}[g]}$ there exists at most one $\evi_\lambda$ gradient curve~$(z_t)_{t\in(0,T)}$ for $g$ satisfying the initial condition $\lim_{t\downarrow 0} z_t  = z_0$.
\end{prop}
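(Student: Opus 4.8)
The plan is to run the classical \emph{doubling of variables} argument, reducing the contraction estimate to a scalar Grönwall inequality along the diagonal. Since the contraction factor $e^{-\lambda(s-r)}$ points to the squared distance, I would introduce the two-variable function
\[
\phi(t,s) \eqdef \tfrac12\di(z^1_t, z^2_s)^2, \qquad (t,s)\in(0,T)^2,
\]
which is continuous since both curves are. The first step is to freeze one variable and exploit \eqref{eq:EVIlambda} for each curve separately: testing the $\evi_\lambda$ inequality for $(z^1_t)$ against the point $z=z^2_s\in\mathcal{D}[g]$ gives
\[
\frac{\de^+}{\de t}\phi(t,s) + \lambda\,\phi(t,s) \le g(z^2_s) - g(z^1_t),
\]
and, symmetrically, testing the inequality for $(z^2_s)$ against $z=z^1_t\in\mathcal{D}[g]$ gives
\[
\frac{\de^+}{\de s}\phi(t,s) + \lambda\,\phi(t,s) \le g(z^1_t) - g(z^2_s).
\]
Adding these two bounds makes the energy terms (finite, since $z^i_t\in\mathcal{D}[g]$) cancel exactly, leaving
\[
\frac{\de^+}{\de t}\phi(t,s) + \frac{\de^+}{\de s}\phi(t,s) + 2\lambda\,\phi(t,s) \le 0, \qquad (t,s)\in(0,T)^2.
\]

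The second step, which I expect to be the crux, is to transfer this bound from the two separate one-sided partial derivatives to the total derivative of the diagonal map $E(t)\eqdef\phi(t,t)=\tfrac12\di(z^1_t,z^2_t)^2$, so as to obtain
\[
\frac{\de^+}{\de t}E(t) + 2\lambda\,E(t) \le 0 \qquad \text{for every } t\in(0,T).
\]
This passage is delicate: splitting the increment $E(t+h)-E(t)$ through the intermediate value $\phi(t,t+h)$ produces, besides the two partial difference quotients, a mixed term in which the base point of the first curve's comparison moves with $h$, and estimating it naively via the triangle inequality costs the metric speed of $z^2$ and does not vanish. To control it I would first invoke the regularizing effect of \eqref{eq:EVIlambda}, namely that $\evi_\lambda$ gradient curves are locally Lipschitz on $(0,T)$, so that $\phi$ is locally Lipschitz on $(0,T)^2$ and $E$ is locally Lipschitz; the diagonal inequality then follows from a comparison lemma for one-sided derivatives of continuous functions, as developed in \cite{MurSav}, which precisely upgrades the summed partial bound to the diagonal one.

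With this scalar differential inequality in hand, the third step is a direct Grönwall argument: since $E$ is continuous and satisfies $\frac{\de^+}{\de t}E \le -2\lambda E$, the auxiliary function $t\mapsto e^{2\lambda t}E(t)$ has nonpositive upper right derivative, hence is non-increasing, yielding $E(s)\le e^{-2\lambda(s-r)}E(r)$ for $0<r<s<T$. Taking square roots gives the claimed contraction $\di(z^1_s,z^2_s)\le e^{-\lambda(s-r)}\di(z^1_r,z^2_r)$.

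Finally, uniqueness is an immediate corollary. If $(z^1_t)$ and $(z^2_t)$ are both $\evi_\lambda$ gradient curves starting at the same $z_0\in\overline{\mathcal{D}[g]}$, I fix $s\in(0,T)$ and let $r\downarrow0$ in the contraction estimate: by continuity of $\di$ together with $z^i_r\to z_0$ we have $\di(z^1_r,z^2_r)\to0$, while $e^{-\lambda(s-r)}\to e^{-\lambda s}<\infty$, forcing $\di(z^1_s,z^2_s)=0$. Hence $z^1_s=z^2_s$ for every $s\in(0,T)$, which proves the asserted uniqueness.
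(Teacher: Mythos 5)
Your argument is correct and is essentially the paper's own: the paper proves this proposition by a one-line citation to \cite[Thm.~3.5]{MurSav}, whose proof is precisely the doubling-of-variables scheme you describe (test each curve's $\evi_\lambda$ inequality against the other, cancel the energies, pass to the diagonal via a comparison lemma for one-sided derivatives, and conclude by Gr\"onwall). You also correctly flag the genuinely delicate step — upgrading the summed partial bounds to the diagonal derivative — and point to the right tool for it, so there is nothing to add.
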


\begin{proof}
See~\cite[Thm.~3.5]{MurSav}. 
\end{proof}

The next result shows that when $g\colon X\to\bar\R$ is $\lambda$-convex the inequality \eqref{eq:EVIlambda} defining $\evi_\lambda$ is a local condition in space, uniform in time. This result is likely, as $\evi_\lambda$ gradient curves play a central role in the characterization of gradient flows in general metric spaces. 

\begin{prop}\label{prop:globalization}
    Let $g\colon X \to \bar \R$ be a lower semicontinuous, $\lambda$-convex functional and let $(z_t)_{t\in (0,T)} \subset \mathcal{D}[g]$, with $T\in (0,+\infty]$, be a continuous curve in $(X,\di)$.
    Assume that for every $t\in (0,T)$ there exists a neighborhood $Z_t$ of $z_t$ such that \eqref{eq:EVIlambda} holds for every $z\in Z_t \cap \mathcal{D}[g]$.
    Then, $(z_t)_{t\in (0,T)}$ is an $\evi_\lambda$ gradient curve for $g$.
\end{prop}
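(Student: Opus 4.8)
The plan is to promote the local-in-space EVI inequality to a global one by a "geodesic interpolation" argument: given an arbitrary target point $z\in\mathcal{D}[g]$ and a fixed time $t\in(0,T)$, connect $z_t$ to $z$ by a $\lambda$-convex geodesic $\gamma\colon[0,1]\to X$ (with $\gamma_0=z_t$, $\gamma_1=z$), and estimate the one-sided derivative $\frac{\de^+}{\de t}\frac{\di(z_t,z)^2}{2}$ by comparing it with the derivatives against the nearby points $\gamma_s$ for small $s>0$, which lie in the neighborhood $Z_t$ where \eqref{eq:EVIlambda} is already known to hold. The key mechanism is that the assumed $\lambda$-convexity of $g$ controls $g(\gamma_s)$ linearly in terms of $g(z_t)$ and $g(z)$, while the metric terms $\di(z_t,\gamma_s)^2$ behave quadratically in $s$, so dividing by $s$ and letting $s\downarrow 0$ will recover exactly the global inequality against the endpoint $z$.

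First I would fix $t\in(0,T)$ and $z\in\mathcal{D}[g]$, and let $\gamma$ be the geodesic from $z_t$ to $z$ along which \eqref{eq:lambdaconvinequality} holds. For $s\in(0,1]$ small enough that $\gamma_s\in Z_t$, continuity of the curve and of $\gamma$ lets me apply the hypothesis with $z$ replaced by $\gamma_s$:
\begin{equation}
\frac{\de^+}{\de t}\frac{\di(z_t,\gamma_s)^2}{2}+\frac{\lambda}{2}\di(z_t,\gamma_s)^2\leq g(\gamma_s)-g(z_t).
\end{equation}
Using $\di(z_t,\gamma_s)=s\,\di(z_t,z)$ (so that $\di(z_t,\gamma_s)^2=s^2\di(z_t,z)^2$) and the $\lambda$-convexity bound
\begin{equation}
g(\gamma_s)\leq (1-s)g(z_t)+s\,g(z)-\frac{\lambda}{2}s(1-s)\di(z_t,z)^2,
\end{equation}
the right-hand side becomes $s\tbraket{g(z)-g(z_t)}-\frac{\lambda}{2}s(1-s)\di(z_t,z)^2$. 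The plan is then to divide the whole inequality by $s$ and pass to the limit $s\downarrow 0$.

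The main technical obstacle is the limit of $\frac{1}{s}\frac{\de^+}{\de t}\frac{\di(z_t,\gamma_s)^2}{2}$: one must show it converges to (or is bounded below by) $\frac{\de^+}{\de t}\frac{\di(z_t,z)^2}{2}$ as $s\downarrow 0$. Since $\frac{\de^+}{\de t}$ is a $\limsup$, interchanging it with the limit in $s$ is not automatic; the cleanest route is to bound $\di(z_{t+h},\gamma_s)$ via the triangle inequality $\di(z_{t+h},z)\leq \di(z_{t+h},\gamma_s)+(1-s)\di(z_t,z)$ combined with $\di(z_t,\gamma_s)=s\,\di(z_t,z)$, expand the squared distances, and track the error terms in $h$ and $s$ carefully. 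After dividing by $s$ one obtains, for each fixed small $s$,
\begin{equation}
\frac{\de^+}{\de t}\frac{\di(z_t,z)^2}{2}+\frac{\lambda}{2}(1-s)\di(z_t,z)^2\leq g(z)-g(z_t)+R(s),
\end{equation}
with an error $R(s)\to 0$; letting $s\downarrow 0$ then yields \eqref{eq:EVIlambda} for the original $z$. I would carry out the metric estimate (the triangle-inequality expansion controlling the difference quotient uniformly in $h$) as the heart of the argument, and since $z\in\mathcal{D}[g]$ and $t\in(0,T)$ were arbitrary, this establishes that $(z_t)$ is a global $\evi_\lambda$ gradient curve.
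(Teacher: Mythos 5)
Your proposal is correct and follows essentially the same route as the paper: interpolate along the $\lambda$-convex geodesic, apply the local EVI at $\gamma_s\in Z_t$, and transfer the derivative estimate back to the endpoint via the triangle inequality $\di(z_{t+h},z)\le\di(z_{t+h},\gamma_s)+\di(\gamma_s,z)$ combined with the equality $\di(z_t,z)=\di(z_t,\gamma_s)+\di(\gamma_s,z)$. The only refinement you miss is that, writing $\frac{\de^+}{\de t}\frac{\di(z_t,\cdot)^2}{2}=\di(z_t,\cdot)\,\frac{\de^+}{\de t}\di(z_t,\cdot)$, the resulting inequality is exact for each fixed $s$ (the $\lambda$-terms recombine to give precisely $-\frac{\lambda}{2}\di(z_t,z)^2$), so no limit $s\downarrow 0$ and no error term $R(s)$ are actually needed.
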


\begin{proof}
    We fix any $z\in \mathcal{D}[g]$ and $t\in (0,T)$. We consider the geodesic $(x_t)_{t\in [0,1]}$ connecting $z_t=x_0$ and $z=x_1$, along which the convexity inequality \eqref{eq:lambdaconvinequality} holds. Hence let us take $r\in(0,1]$ such that the point $z'\eqdef x_r$ is in $Z_t$. We then have  $\di(z_t,z')= r \di(z_t,z)$ and 
    \begin{equation}\label{eq:}
        g(z') \leq (1-r) g(z_t) + r g(z) - \frac{\lambda}{2} r (1-r) \di(z_t,z)^2,
    \end{equation}
    ensuring in particular that $z' \in \mathcal{D}[g]$. Observe that, since $z'$ lies on a geodesic connecting $z_t$ and $z$, we have that $\di(z_t,z)= \di(z_t,z') + \di(z',z)$. Moreover, the triangular inequality guarantees that  $\di(z_s,z)\leq \di(z_s,z') + \di(z',z)$ for every $s\in (0,T)$, therefore we deduce that
    \begin{equation}
    \begin{split}
        \frac{\de^+}{\de t} \frac{\di(z_t,z)^2}{2} = \di(z_t,z) \, \frac{\de^+}{\de t} \di(z_t,z) &\leq \di(z_t,z)\,\frac{\de^+}{\de t} \di(z_t,z') =   \dfrac{\di(z_t,z')}{r}\,\frac{\de^+}{\de t} \di(z_t,z') =  \frac{1}{r} \frac{\de^+}{\de t} \frac{\di(z_t,z')^2}{2}.
    \end{split}
    \end{equation}
    Then, using the assumption on the validity of \eqref{eq:EVIlambda} for any point in $Z_t \cap \mathcal{D}[g]$ and the fact that $z'\in Z_t \cap \mathcal{D}[g]$, we obtain 
    \begin{equation}
    \begin{split}
        \frac{\de^+}{\de t} \frac{\di(z_t,z)^2}{2} &\leq \frac 1r \bigg[ g(z')-g(z_t) - \frac\lambda 2 \di(z_t,z')^2 \bigg] \\
        &\leq \frac 1r \bigg[ (1-r) g(z_t) + r g(z) - \frac{\lambda}{2} r (1-r) \di(
        z_t,z)^2 -g(z_t) - \frac\lambda 2 \di(z_t,z')^2 \bigg] \\
        &= g(z)-g(z_t) - \frac\lambda 2 \di(z_t,z)^2,
    \end{split}
    \end{equation}
    concluding the proof.
\end{proof}

\begin{definition}[$\evikn$ gradient curve]
    Fix $K \in \R$ and $N<0$, and let $f\colon X \to \bar \R$ be a lower semicontinuous functional. 
    An \emph{$\evikn$ gradient curve} for~$f$ is a continuous curve $(y_t)_{t\in (0,T)} \subset \mathcal D[f]$, with $T\in (0,+\infty]$, such that 
    \begin{equation}\label{eq:EVIKN}
        \frac{\de^+}{\de t} \bigg[ \skn \bigg( \frac{\di(y_t,z)}{2}\bigg)^2\bigg] + K \skn \bigg( \frac{\di(y_t,z)}{2}\bigg)^2 \leq \frac{N}{2} \bigg( 1 - \frac{f_N(z)}{f_N(y_t)}\bigg),
    \end{equation}
     for every $z\in \bar{\mathcal D}[f]$ and every $t\in(0,T)$, with $\di (y_t,z)< \pi \sqrt{ K/N}$ if $K<0$.
\end{definition}

Through standard computations, equation \eqref{eq:EVIKN} can be rewritten to obtain the following equivalent formulations for $\evikn$ gradient curves, cf. equations 3.5 and 3.6 in \cite{ohta}.

\begin{prop}[Equivalent formulations for $\evikn$]\label{prop:equivalentEVIKN}
    The inequality \eqref{eq:EVIKN} defining an $\evikn$ gradient curve is equivalent to each of the following:
    \begin{enumerate}[$(i)$]
        \item\label{i:p:equivalentEVIKN:1} $\displaystyle \frac{\de^+}{\de t} \frac{\di(y_t,z)^2}{2} \leq \frac{N \, \di(y_t,z)}{ \skn \big(\di(y_t,z)\big)} \braket{ 1 - \frac{f_N(z)}{f_N(y_t)} } - 2 K \di(y_t,z) \frac{\skn \big(\di(y_t,z)/2\big)^2}{\skn \big(\di(y_t,z)\big)},$ 
        \item\label{i:p:equivalentEVIKN:2} $\displaystyle \frac{\de^+}{\de t} \frac{\di(y_t,z)^2}{2}  \leq \frac{N \, \di(y_t,z)}{ \skn \big(\di(y_t,z)\big)} \braket{ \ckn\big(\di(y_t,z)
        \big) - \frac{f_N(z)}{f_N(y_t)} }.$ 
    \end{enumerate}
\end{prop}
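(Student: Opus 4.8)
The plan is to derive both reformulations from a single chain-rule identity, performing every passage by multiplication or division by a strictly positive factor, so that each step is reversible and the stated equivalences hold in both directions. Throughout I abbreviate $D(t)\eqdef\di(y_t,z)$ and work on the admissible range, where $D(t)>0$ and $\skn(D)>0$ (the degenerate case $D(t)=0$ being trivial, as all three inequalities then reduce to $0\le 0$).

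First I would record the elementary identities for the generalized trigonometric functions, each verified case-by-case on the three branches $K<0$, $K=0$, $K>0$ in the definitions of $\skn$ and $\ckn$:
\[
\skn'=\ckn,\qquad \skn(2\theta)=2\,\skn(\theta)\,\ckn(\theta),\qquad K\,\skn(\theta/2)^2=\tfrac N2\big(1-\ckn(\theta)\big).
\]
The first is the derivative formula, the second the duplication formula, and the third the half-angle identity $1-\ckn(\theta)=2\tfrac KN\skn(\theta/2)^2$; this last one is the bridge between formulations \ref{i:p:equivalentEVIKN:1} and \ref{i:p:equivalentEVIKN:2}.

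Next I would establish the chain-rule step. The maps $\theta\mapsto\skn(\theta/2)^2$ and $\theta\mapsto\theta^2/2$ are $C^1$ and strictly increasing on the admissible range, with strictly positive derivatives $\tfrac12\skn(\theta)$ (using the duplication formula) and $\theta$; since $(y_t)$ is continuous, $D$ is continuous, and a mean-value reduction gives
\[
\frac{\de^+}{\de t}\Big[\skn\big(D/2\big)^2\Big]=\frac{\skn(D)}{2}\,\frac{\de^+}{\de t}D=\frac{\skn(D)}{2D}\,\frac{\de^+}{\de t}\frac{D^2}{2}.
\]
Substituting this into \eqref{eq:EVIKN} and multiplying by the positive factor $2D/\skn(D)$ produces formulation \ref{i:p:equivalentEVIKN:1} after moving the curvature term to the right-hand side; reading these operations backwards recovers \eqref{eq:EVIKN}. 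Finally, to pass to \ref{i:p:equivalentEVIKN:2}, I would rewrite the term $2KD\,\skn(D/2)^2/\skn(D)$ appearing in \ref{i:p:equivalentEVIKN:1} by the half-angle identity as $ND\big(1-\ckn(D)\big)/\skn(D)$; combining the two resulting constants then yields exactly the factor $\ckn(D)-f_N(z)/f_N(y_t)$, and since this replacement is an identity it is again reversible.

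The one genuinely delicate point will be the justification of the chain rule for the upper right Dini derivative $\frac{\de^+}{\de t}$, since $(y_t)$ is merely continuous and $D$ need not be differentiable nor have a finite right derivative. The remedy is to write the incremental ratio as $\psi'(\xi_h)\,\tfrac{D(t+h)-D(t)}{h}$ with $\xi_h$ between $D(t)$ and $D(t+h)$, so that $\xi_h\to D(t)$ by continuity and $\psi'(\xi_h)\to\psi'(D(t))$; because this limit is finite and strictly positive, it factors out of $\limsup_{h\downarrow 0}$ even when $\frac{\de^+}{\de t}D$ is infinite. All remaining manipulations are algebraic rearrangements by strictly positive quantities, so no equivalence is lost.
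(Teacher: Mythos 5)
The paper gives no proof of this proposition, deferring to ``standard computations'' and to Ohta's equations 3.5--3.6; your derivation is a correct and complete version of exactly those computations, with the right three ingredients ($\skn'=\ckn$, the duplication formula giving $\frac{\dee}{\dee\theta}\skn(\theta/2)^2=\tfrac12\skn(\theta)$, and the half-angle identity $K\skn(\theta/2)^2=\tfrac N2(1-\ckn(\theta))$ linking \ref{i:p:equivalentEVIKN:1} to \ref{i:p:equivalentEVIKN:2}), and with the Dini-derivative chain rule properly justified via the mean-value factorization. The only imprecision is the claim that the case $\di(y_t,z)=0$ ``reduces to $0\le 0$'': there the left-hand sides are nonnegative limsups that the inequalities force to vanish, so they are not vacuous, but the equivalence still holds by the same positive-factor comparison, so nothing is lost.
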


The following proposition shows the monotonicity in $K$ and $N$ of the notion of $\evikn$ gradient curves and was proved in \cite{ohta}, see Lemma 3.3 therein.

\begin{prop}[Monotonicity of $\evi$]\label{prop:monotonicity}
    If $(y_t)_{t\in (0,T)}$ is an $\evikn$ gradient curve for $f$, then it is an $\evi_{K',N'}$ gradient curve for $f$, for every $K'\leq K$ and $N'\in [N,0)$.
    Moreover, if $(y_t)_{t\in (0,T)}$ is an $\evi_K$ gradient curve for $f$, then it is a $\evi_{K,N}$ gradient curve for $f$, for every $N<0$.
\end{prop}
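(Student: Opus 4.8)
The plan is to reduce both assertions to a single pointwise monotonicity statement for the right-hand side of the equivalent formulation in Proposition~\ref{prop:equivalentEVIKN}\ref{i:p:equivalentEVIKN:2}. Setting
\[
G_{K,N}(\theta,s)\eqdef \frac{N\theta}{\skn(\theta)}\left(\ckn(\theta)-e^{s/N}\right),
\]
and recalling $f_N(z)/f_N(y_t)=e^{(f(y_t)-f(z))/N}$, the curve $(y_t)$ is an $\evikn$ gradient curve for $f$ if and only if $\frac{\de^+}{\de t}\frac{\di(y_t,z)^2}{2}\le G_{K,N}\!\left(\di(y_t,z),\,f(y_t)-f(z)\right)$ for every admissible $z,t$. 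The decisive point is that the left-hand side does \emph{not} depend on $(K,N)$; hence it suffices to prove, for fixed $(\theta,s)$ in the relevant range (with the convention $0\cdot\infty=0$ when $s=\pm\infty$), that $K\mapsto G_{K,N}(\theta,s)$ is non-increasing at fixed $N<0$, that $N\mapsto G_{K,N}(\theta,s)$ is non-decreasing at fixed $K$, and that $\lim_{N\to-\infty}G_{K,N}(\theta,s)=-s-\tfrac{K}{2}\theta^2$, the last quantity being exactly the right-hand side of the $\evi_K$ inequality $\frac{\de^+}{\de t}\frac{\di(y_t,z)^2}{2}\le f(z)-f(y_t)-\tfrac K2\di(y_t,z)^2$. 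Granting these, the first statement follows from the chain $G_{K,N}\le G_{K',N}\le G_{K',N'}$ for $K'\le K$ and $N'\in[N,0)$ (using that shrinking the parameters also shrinks the admissible range of $\di(y_t,z)$ on which $\skn>0$, so the admissible $z$ for $(K',N')$ form a subset of those for $(K,N)$); the second follows since $\evi_K$ gives $\frac{\de^+}{\de t}\frac{\di(y_t,z)^2}{2}\le\lim_{N\to-\infty}G_{K,N}\le G_{K,N}$.

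For the monotonicity in $K$ I would argue through the distortion coefficients. Writing $\sigma_{K,N}^{(t)}(\theta)=\skn(t\theta)/\skn(\theta)$, with $\sigma_{K,N}^{(0)}=0$ and $\sigma_{K,N}^{(1)}=1$, the one-sided $t$-derivatives at the endpoints are $\partial_t\sigma_{K,N}^{(t)}(\theta)\big|_{t=0^+}=\theta/\skn(\theta)$ and $\partial_t\sigma_{K,N}^{(t)}(\theta)\big|_{t=1^-}=\theta\,\ckn(\theta)/\skn(\theta)$, so that
\[
G_{K,N}(\theta,s)=N\,\partial_t\sigma_{K,N}^{(t)}(\theta)\big|_{t=1^-}-e^{s/N}\,N\,\partial_t\sigma_{K,N}^{(t)}(\theta)\big|_{t=0^+}.
\]
Since the difference quotients $\sigma_{K,N}^{(t)}(\theta)/t$ and $(1-\sigma_{K,N}^{(t)}(\theta))/(1-t)$ inherit from \eqref{eq:monotonicity} the monotonicity in $K$, passing to the limits $t\to0^+$ and $t\to1^-$ (non-strict inequalities survive) shows that $\theta/\skn(\theta)$ is non-increasing in $K$ while $\theta\,\ckn(\theta)/\skn(\theta)$ is non-decreasing in $K$; together with $N<0$ and $e^{s/N}>0$ this yields $\partial_K G_{K,N}\le0$.

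The monotonicity in $N$ is the main obstacle, since $N$ also enters $G_{K,N}$ through the prefactor and through $e^{s/N}$, so the previous argument does not close. Here I would substitute $\phi\eqdef \theta\sqrt{-K/N}\in(0,\infty)$ for $K>0$ (resp.\ $\psi\eqdef \theta\sqrt{K/N}\in(0,\pi)$ for $K<0$, while $K=0$ is direct, where $G_{K,N}=N(1-e^{s/N})$). With $c\eqdef s/(K\theta^2)$ one has $s/N=-c\phi^2$, and a computation rewrites
\[
G_{K,N}(\theta,s)=K\theta^2\,\frac{e^{-c\phi^2}-\cosh\phi}{\phi\sinh\phi}.
\]
As $\phi$ is increasing in $N$ and $K\theta^2>0$, the claim becomes that $\Psi_c(\phi)\eqdef \big(e^{-c\phi^2}-\cosh\phi\big)/(\phi\sinh\phi)$ is non-decreasing on $(0,\infty)$ for every $c\in\R$ (with the analogous trigonometric statement on $(0,\pi)$ for $K<0$); the limit $\phi\to0^+$ gives $\Psi_c(0^+)=-(c+\tfrac12)$, reproducing the $\evi_K$ endpoint. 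Computing the sign of $\Psi_c'$ and optimizing over the $c$-dependence reduces this to the scalar inequality $2\sinh\phi\,\exp\!\big(\tfrac12(\phi\coth\phi-1)\big)\le\phi+\sinh\phi\cosh\phi$ on $(0,\infty)$ (and its trigonometric counterpart on $(0,\pi)$), which is elementary but requires a careful estimate. This scalar inequality is the technical crux of the whole proof.

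One final care is needed for the second statement: $\evi_K$ is assumed only for finite-valued $z\in\mathcal{D}[f]$, whereas $\evikn$ is required for all $z\in\bar{\mathcal D}[f]$. For such $z$ with $f(z)=+\infty$ the inequality is trivial (the right-hand side is $+\infty$), while the case $f(z)=-\infty$ is recovered by a limiting argument along points of $\mathcal{D}[f]$ exploiting the lower semicontinuity of $f$; this is a technical approximation step rather than the mathematical heart, and it is the only place where lower semicontinuity intervenes.
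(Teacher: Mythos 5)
The paper does not actually prove this proposition: it is quoted from Ohta's work (Lemma~3.3 of \cite{ohta}), so there is no internal proof to compare against. Your overall strategy --- reducing both assertions to pointwise monotonicity in $(K,N)$ of the right-hand side $G_{K,N}(\theta,s)$ of the equivalent formulation in Proposition~\ref{prop:equivalentEVIKN}\ref{i:p:equivalentEVIKN:2}, together with the limit $\lim_{N\to-\infty}G_{K,N}(\theta,s)=-s-\tfrac{K}{2}\theta^2$ for the second assertion --- is the natural and correct one, and your treatment of the $K$-monotonicity (via the endpoint $t$-derivatives of $\sigma_{K,N}^{(t)}$) and of the $N\to-\infty$ limit is sound. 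However, the proof is incomplete exactly where the real work lies: the $N$-monotonicity of $G_{K,N}$ is reduced (correctly, as far as the substitution $\phi=\theta\sqrt{-K/N}$ and the optimization over $c$ go) to the scalar inequality $2\sinh\phi\,\exp\tparen{\tfrac12(\phi\coth\phi-1)}\le\phi+\sinh\phi\cosh\phi$ and its trigonometric counterpart, which you state but do not prove. This inequality is true but remarkably tight (at $\phi=0.5$ the two sides differ by less than $0.2\%$, and the Taylor expansions agree through order $\phi^3$), so it cannot be waved off as ``elementary''; without it the first assertion of the proposition --- the only part that is not a formal consequence of the $K$-monotonicity of $\sigma_{K,N}^{(t)}$ --- is not established. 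You must either prove this estimate or find a route that avoids it (e.g.\ differentiating $G_{K,N}$ in $N$ directly and exploiting a known two-variable monotonicity).

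A second, smaller issue concerns your closing paragraph on the second assertion. For a test point $z$ with $f(z)=-\infty$ one has $f_N(z)=0$ and the $\evikn$ inequality demands the strictly negative bound $\tfrac{N}{2}$ on the right-hand side, whereas the $\evi_K$ hypothesis is only tested on $\mathcal{D}[f]$, where $f$ is finite. Lower semicontinuity of $f$ implies lower semicontinuity of $f_N$ (since $x\mapsto e^{-x/N}$ is increasing for $N<0$), which gives $\liminf_n f_N(z_n)\geq f_N(z)=0$ along any approximating sequence $z_n\in\mathcal{D}[f]$ --- the \emph{wrong} direction for your limiting argument, which needs $f_N(z_n)\to 0$, i.e.\ $f(z_n)\to-\infty$. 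In general no such sequence exists (take $f=-\infty$ at a single point and $f\equiv 0$ elsewhere: this $f$ is lower semicontinuous, a constant curve is an $\evi_0$ gradient curve for it, yet the $\evi_{0,N}$ inequality fails at the singular point). So the approximation step as described does not close; this edge case is an artefact of the present paper's extension of Ohta's framework to functionals attaining $-\infty$, and deserves either an explicit additional hypothesis or a genuinely different argument.
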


We conclude this section by proving that $f$ is non-increasing along every $\evikn$ curve and in particular this implies that no $\evikn$ curve exists starting at $y_0 \notin \mathcal D^\star[f]$.
 We remark that this foundational result was neither proved nor even observed before and will allow us to improve several results in \cite{ohta}.

\begin{prop}\label{prop:fdecreases}
    Let~$f\colon X \to \bar \R$ be a lower semicontinuous functional. Let $(y_t)_{t\in (0,T)}$ be an $\evikn$ gradient curve for $f$, for some $K \in \R$ and $N<0$. Then, the function 
    \begin{equation}\label{eq:increasing}
        (0, T)\ni t \mapsto h(t)\eqdef f(y_t)
    \end{equation}
    is non-increasing.
\end{prop}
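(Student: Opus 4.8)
The plan is to reduce the statement to a monotonicity property of a single scalar function and then exploit the defining inequality \eqref{eq:EVIKN} with a carefully chosen test point. Since $N<0$, the map $r\mapsto e^{r/N}$ is decreasing, so $h(t)=f(y_t)$ is non-increasing if and only if the function $w(t)\eqdef e^{f(y_t)/N}=1/f_N(y_t)$ is non-decreasing on $(0,T)$. Note that $w$ takes values in $(0,+\infty)$ because $y_t\in\mathcal D[f]$, and that $w$ is upper semicontinuous: indeed $f\circ y$ is lower semicontinuous (composition of the lower semicontinuous $f$ with the continuous curve $y$), hence $\tfrac1N(f\circ y)$ is upper semicontinuous (as $1/N<0$), and so is $w=\exp\!\big(\tfrac1N f\circ y\big)$.

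I would argue by contradiction. If $w$ is not non-decreasing, there are $a<b$ in $(0,T)$ with $w(a)>w(b)$; fix $c\in(w(b),w(a))$ and set $m\eqdef\sup\{t\in[a,b]\colon w(t)\ge c\}$. By upper semicontinuity the superlevel set $\{w\ge c\}$ is relatively closed, so the supremum is attained, $w(m)\ge c$, and necessarily $m<b$ with $w(\tau)<c$ for every $\tau\in(m,b]$. The key idea is that at this ``last exit point'' $m$ the energy must already be decreasing, which I make precise by testing \eqref{eq:EVIKN} against the \emph{past} point $z=y_m\in\mathcal D[f]\subset\bar{\mathcal D}[f]$. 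Writing $\beta(\tau)\eqdef\skn\!\big(\di(y_\tau,y_m)/2\big)^2$, a continuous nonnegative function with $\beta(m)=0$, the inequality \eqref{eq:EVIKN} at base time $\tau$ reads $\frac{\de^+}{\de\tau}\beta(\tau)+K\beta(\tau)\le\frac N2\big(1-f_N(y_m)\,w(\tau)\big)$. Integrating this upper-Dini-derivative inequality over $[m,t]$ (licit since $\beta$ is continuous and the right-hand side is integrable, $w$ being bounded above on the compact $[m,t]$) and using $\beta(m)=0$ gives
\[
\beta(t)+K\int_m^t\beta \;\le\; \frac N2\Big[(t-m)-f_N(y_m)\int_m^t w\Big].
\]

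On $(m,t]$ one has $w<c$, so $\int_m^t w\le c(t-m)$, while $w(m)\ge c$ yields $f_N(y_m)=1/w(m)\le 1/c$; hence $f_N(y_m)\int_m^t w\le(t-m)$, the bracket is nonnegative, and since $N<0$ the whole right-hand side is $\le 0$. Thus $\beta(t)+K\int_m^t\beta\le0$ for every admissible $t>m$. Setting $B(t)\eqdef\int_m^t\beta\ge0$, this is $B'(t)+KB(t)\le0$ with $B(m)=0$, so $\big(e^{K\tau}B(\tau)\big)'\le0$, forcing $B\equiv0$ and therefore $\beta\equiv0$, i.e.\ $y_\tau=y_m$. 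But then $w(\tau)=w(m)\ge c$ on $(m,b]$, contradicting $w(\tau)<c$. (For $K<0$ the test point $y_m$ is admissible only while $\di(y_\tau,y_m)<\pi\sqrt{K/N}$; running the argument on the maximal subinterval where this holds yields $y_\tau=y_m$, hence $\di(y_\tau,y_m)=0$, so the constraint never becomes active and the conclusion propagates all the way to $b$.)

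The main obstacle, and the reason a naive pointwise computation fails, is the sign of the two ``unfavourable'' terms: the right derivative of $\tau\mapsto\di(y_\tau,y_m)$ need not be nonnegative at a single time, and for $K<0$ the curvature term $K\beta$ has the wrong sign. Both are neutralized by passing from the pointwise inequality to its integrated form and then invoking the Grönwall step above, which turns the defect into the rigidity statement $y_\tau\equiv y_m$. The secondary technical points—justifying the fundamental-theorem-of-calculus estimate for the upper right Dini derivative of the continuous function $\beta$, and exploiting upper semicontinuity of $w$ to make the last-exit point $m$ well defined—are standard and I would treat them briefly.
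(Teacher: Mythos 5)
Your proof is correct and is essentially the paper's own argument in different packaging: the paper uses lower semicontinuity of $t\mapsto f(y_t)$ to pick $\bar t\in\argmin_{[t_0,t_1]}f\circ y$, tests \eqref{eq:EVIKN} against the past point $z=y_{\bar t}$ so that the entropy term $\tfrac N2\big(1-f_N(y_{\bar t})/f_N(y_t)\big)$ is nonpositive, and applies Gr\"onwall to $\skn\big(\di(y_t,y_{\bar t})/2\big)^2$ to force $y_t\equiv y_{\bar t}$ --- precisely your last-exit point $m$, test point $y_m$, and rigidity step. The only substantive remark is that your bounds $w(\tau)<c\le 1/f_N(y_m)$ already yield the \emph{pointwise} inequality $\frac{\de^+}{\de\tau}\beta(\tau)\le -K\beta(\tau)$ with continuous right-hand side, so you can (and should) apply the Dini--Gr\"onwall argument directly, as the paper does, rather than first integrating the Dini-derivative inequality against the merely upper semicontinuous $w$, a step that is not automatic for non-continuous majorants.
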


\begin{proof}

Since the function $f$ is lower semicontinuous and $(y_t)_{t\in (0,T)}$ is continuous, the function~$h\colon (0,T)\to\bar\R$ too is lower semicontinuous.
Therefore, for every~$t_0,t_1\in(0,T)$ with $t_0<t_1$, we find that ~$\argmin_{[t_0,t_1]} h\neq \varnothing$, allowing us to select~$\bar t \in [t_0, t_1]$ where the minimum is attained.
    Now, for every $t\in[\bar t,t_1]$, letting~$z=y_{\bar t}$ in the $\evikn$ inequality, we get
     \begin{equation}
         \frac{\de^+}{\de t} \bigg[ \skn \bigg( \frac{\di(y_t,y_{\bar t})}{2}\bigg)^2\bigg] \leq - K \skn \bigg( \frac{\di(y_t,y_{\bar t})}{2}\bigg)^2 + \frac{N}{2} \braket{ 1 - \frac{f_N(y_{\bar t})}{f_N(y_t)} } \leq - K \skn \bigg( \frac{\di(y_t,y_{\bar t})}{2}\bigg)^2.
     \end{equation}
     As a consequence, Gr\"onwall's lemma on~$[\bar t,t_1]$ ensures that 
     \begin{equation}
         \skn \bigg( \frac{\di(y_t,y_{\bar t})}{2}\bigg)^2 \leq 0, \qquad  t\in[\bar t,t_1].
     \end{equation}
     From this we deduce that $y_t=y_{\bar t}$ for every $t\in [\bar t,t_1]$.
     Thus, ~$t_1\in \argmin_{[t_0,t_1]} h$, and the conclusion follows by arbitrariness of $t_0$ and $t_1$.
\end{proof}

\section{Reparametrization maps and $\evi$}\label{sec:3}
Everywhere in this section we fix a metric space $(X, \di)$, a \emph{densely defined} (i.e.~$\overline{\mathcal{D}[f]}=X$) \emph{lower semicontinuous} functional $f \colon X \to \bar \R$ and two constants, $K\in \R$ and $N<0$. 

\subsection{Definition of reparametrization maps}

Let us introduce the reparametrization maps, which will serve as a fundamental tool for studying the gradient flows of $(K,N)$-convex functionals. As already mentioned in the introduction, the aim of these reparametrization maps is to obtain a gradient curve for~$f_N$, starting from a gradient curve for~$f$, in order to exploit the improved convexity properties of $f_N$, when $f$ is $(K,N)$-convex. This idea will be formalized in Proposition \ref{prop:correspondence} and Proposition \ref{prop:correspondence2}. \medskip

We denote by $\mathcal{C}$ the set of all the continuous curves  parametrized on some interval $(0,T)$, with $T\in(0,+\infty]$, and with image in $\mathcal D[f]$. We introduce the following two subsets of $\mathcal{C}$:
\begin{equation}\label{eq:CPrimo}
    \mathcal{C}':= \Big\{(y_t)_{t\in (0,T)} \in \mathcal C\, :\, (0,T) \ni t \mapsto f (y_t) \text{ is non-increasing} \Big\},
\end{equation}
\begin{equation}
    \mathcal{C}''_N:= \Big\{(y_t)_{t\in (0,T)} \in \mathcal C '\, :\, \int_0^{T\wedge 1} f_N(y_t) \de t <\infty\Big\}.
\end{equation}

We now proceed to define the two reparametrization maps
\begin{equation}
    \rr_1\colon \mathcal{C}'  \longrightarrow \mathcal{C}''_N \qquad \text{and} \qquad \rr_2\colon \mathcal{C}''_N \longrightarrow \mathcal{C}'.
\end{equation}
The maps~$\mathcal R_1$ and~$\mathcal R_2$ depend on~$N$, though this dependence is omitted for simplicity of the notation.
In order to define $\rr_1$, take any curve $(y_t)_{t\in (0,T)}\in \mathcal{C}'$ and consider the function $\alpha\colon (0,T) \to (0,T')$ defined as 
    \begin{equation}\label{eq:AlphaReparametrization}
        \alpha(t)\eqdef  - N \int_0^t \frac{1}{f_N(y_r)}\de r \quad \text{for } t\in (0,T),
    \end{equation}
    where we set $T'\eqdef \lim_{t\to T^-}\alpha(t)$. Observe that $\alpha$ is  locally bi-Lipschitz and almost everywhere differentiable with derivative
    \begin{equation}
        \alpha'(t)= - \frac{N}{f_N(y_t)} >0.
    \end{equation}
    In particular, $\alpha$ is strictly increasing and thus invertible. Let $\varphi\colon (0,T') \to (0,T)$ be its inverse, for which we have $\lim_{s\downarrow 0}\varphi(s)=0$, $\lim_{s\to T'^-}\varphi(s)=T$ and  
    \begin{equation}\label{eq:phi}
        \varphi'(s)= \frac{1}{\alpha'(\varphi(s))} = - \frac 1N f_N(y_{\varphi(s)})>0, \qquad  \text{for a.e. } s\in [0,T'],
    \end{equation}
    so that also $\varphi$ is  locally bi-Lipschitz and strictly increasing. Finally, we set 
    \begin{equation}
        \rr_1\big((y_t)_{t\in (0,T)}\big) = (z_s)_{s\in (0,T')}\eqdef (y_{\varphi(s)})_{s\in (0,T')}.
    \end{equation}
    Observe that, given any $r\in (0, T' \wedge 1)$, the function $\varphi$ is Lipschitz on $[r, T '\wedge 1)$ and therefore $\int_r^{T' \wedge 1} \varphi'(s) \de s =\varphi(T' \wedge 1)- \varphi(r)$. Taking the limit as $r\to 0$, we deduce that 
    \begin{equation}
        \int_0^{T' \wedge 1}  f_N(y_{\varphi(s)}) \de s = - N \int_0^{T' \wedge 1} \varphi'(s) \de s = - N \lim_{s\to T' \wedge 1}\varphi(s) <\infty,
    \end{equation}
    showing in particular that $(z_s)_{s\in (0,T')} \in \mathcal C''_N$.
    
    In a similar way, we can define the reparametrization map $\rr_2$.
    We consider any $(z_s)_{s\in (0,T')}\in \mathcal{C}''_N$ and define the function $\beta\colon (0,T') \to (0,T)$ as 
    \begin{equation}
        \beta(s)\eqdef  -\frac 1N \int_0^s f_N(z_r)\de r,
    \end{equation}
    where we set $T=\lim_{s\to T'}\beta(s)$. Observe that, since $(z_s)_{s\in (0,T')}\in \mathcal{C}''_N$, $\beta$ is well defined. Moreover,
    it is  locally bi-Lipschitz and almost everywhere differentiable with derivative
    \begin{equation}
        \beta'(s)= -\frac{f_N(z_s)}{N} >0, \qquad \forall s \in (0,T'),
    \end{equation}
    which ensures that it is strictly increasing and thus invertible. Let $\psi\colon (0,T) \to (0,T')$ be its inverse, for which we have $\lim_{t \downarrow 0}\psi(t)=0$, $\lim_{t\to T^-}\psi(t)=T'$ and  
    \begin{equation}\label{eq:psi}
        \psi'(t)= \frac{1}{\beta'(\psi(t))} = - \frac N{f_N(z_{\psi(t)})} >0,\qquad  \text{for a.e. } t\in (0,T),
    \end{equation}
    in particular $\psi$ is locally bi-Lipschitz and strictly increasing. 
    Then, we set
    \begin{equation}
        \rr_2\big((z_s)_{s\in (0,T')}\big)=(y_t)_{t\in (0,T)}\eqdef (z_{\psi(t)})_{t\in (0,T)}.
    \end{equation}

\begin{prop}\label{prop:inverse}
    The maps $\rr_1$ and $\rr_2$ are inverse of each other, i.e. 
    \[\rr_1 \circ \rr_2=\id_{\mathcal{C}'} \, \text{ and } \, \rr_2 \circ \rr_1=\id_{\mathcal{C}''_N}.\]
\end{prop}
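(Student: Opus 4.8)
The plan is to show that the two constructions are driven by mutually inverse time-changes, so that the statement reduces to the fundamental theorem of calculus for locally Lipschitz monotone functions.

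First I would verify $\rr_2\circ\rr_1=\id_{\mathcal{C}'}$. I fix $(y_t)_{t\in(0,T)}\in\mathcal{C}'$ and denote by $\alpha$, $\varphi=\alpha^{-1}$, and $(z_s)_{s\in(0,T')}=(y_{\varphi(s)})$ the objects produced by $\rr_1$, recalling that $(z_s)\in\mathcal{C}''_N$ as already checked in the construction. Applying $\rr_2$ to $(z_s)$ produces $\beta(s)=-\frac1N\int_0^s f_N(z_r)\de r$; the decisive observation is that, since $z_r=y_{\varphi(r)}$ and $\varphi'(r)=-\frac1N f_N(y_{\varphi(r)})$ by \eqref{eq:phi}, the integrand of $\beta$ is exactly $\varphi'$, whence $\beta(s)=\int_0^s\varphi'(u)\de u$. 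I would then invoke that $\varphi$ is locally Lipschitz on $(0,T')$ with $\lim_{r\downarrow 0}\varphi(r)=0$, so that $\varphi(s)-\varphi(r)=\int_r^s\varphi'(u)\de u$ for $0<r<s$ and, letting $r\downarrow 0$, $\varphi(s)=\int_0^s\varphi'(u)\de u=\beta(s)$. Hence $\beta=\varphi$, and taking inverses $\psi=\beta^{-1}=\varphi^{-1}=\alpha$; in particular the endpoint $\lim_{s\to T'}\beta(s)$ recovered by $\rr_2$ equals $\lim_{s\to T'}\varphi(s)=T$. This gives $\rr_2(\rr_1((y_t)_t))=(z_{\psi(t)})_{t\in(0,T)}=(y_{\varphi(\alpha(t))})_{t\in(0,T)}=(y_t)_{t\in(0,T)}$.

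The identity $\rr_1\circ\rr_2=\id_{\mathcal{C}''_N}$ will follow by the symmetric argument. Starting from $(z_s)_{s\in(0,T')}\in\mathcal{C}''_N$, let $\beta$, $\psi=\beta^{-1}$, and $(y_t)=(z_{\psi(t)})$ be produced by $\rr_2$, and then feed $(y_t)$ into $\rr_1$ to obtain $\alpha(t)=-N\int_0^t f_N(y_r)^{-1}\de r$. Using $y_r=z_{\psi(r)}$ together with $\psi'(r)=-N/f_N(z_{\psi(r)})$ from \eqref{eq:psi}, the integrand of $\alpha$ equals $\psi'$, so the same fundamental-theorem-of-calculus argument, now with the boundary condition $\lim_{r\downarrow 0}\psi(r)=0$, yields $\alpha=\psi$, hence $\varphi=\alpha^{-1}=\psi^{-1}=\beta$ and $\rr_1(\rr_2((z_s)_s))=(z_{\psi(\varphi(s))})_{s\in(0,T')}=(z_s)_{s\in(0,T')}$.

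The only genuinely delicate point will be the passage to the limit $r\downarrow 0$ in the fundamental theorem of calculus, where $\varphi'$ (respectively $\psi'$) need not be bounded near $0$. I expect this to be harmless: the integrability of $\varphi'$ on $(0,s)$ is precisely the finiteness $\int_0^{T'\wedge 1}f_N(z_r)\de r<\infty$ built into the definition of $\mathcal{C}''_N$, so the improper integral converges, and the boundary values $\lim_{r\downarrow 0}\varphi(r)=0$ and $\lim_{r\downarrow 0}\psi(r)=0$ established during the constructions of $\rr_1$ and $\rr_2$ close the argument. Everything else is bookkeeping of the two mutually inverse time-changes.
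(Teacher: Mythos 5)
Your proposal is correct and follows essentially the same route as the paper: both arguments reduce the claim to the fact that the two time-changes are mutually inverse, justified by the fundamental theorem of calculus for locally Lipschitz increasing functions together with the boundary condition at $0$. The only cosmetic difference is that you identify $\beta$ with $\varphi$ directly, whereas the paper differentiates the composition $\varphi\circ\psi$ and shows its a.e.\ derivative equals $1$; these are equivalent, and you have also (correctly) read the two identities as $\rr_2\circ\rr_1=\id_{\mathcal{C}'}$ and $\rr_1\circ\rr_2=\id_{\mathcal{C}''_N}$, matching the domains of the maps.
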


\begin{proof}
    Consider any $(y_t)_{t\in (0,T)}\in \mathcal{C}'$ and take $(z_s)_{s\in (0,T')}=\rr_1\big((y_t)_{t\in (0,T)}\big)$. According to the discussion above, the maps $\varphi$ and $\psi$ associated to these curves (respectively), are locally bi-Lipschitz. Thus, their composition $\varphi\circ\psi:(0,T) \to (0,T)$ is itself locally bi-Lipschitz and, by \eqref{eq:phi} and \eqref{eq:psi}, it satisfies
    \begin{equation}
        (\varphi\circ\psi)'(t) = \varphi'(\psi(t)) \,\psi'(t)= f_N(y_{\varphi\circ\psi (t)}) \, f_N(z_{\psi (t)})^{-1} = 1,  \qquad\text{for a.e. } t\in (0,T).
    \end{equation}
    Recalling that $\lim_{s\downarrow 0}\varphi(s)=0$ and $\lim_{t \downarrow 0}\psi(t)=0$, we deduce that $\varphi\circ\psi (t)=t$ for every $t\in (0,T)$ and, as a consequence, we conclude that $\rr_2\big(\rr_1\big((y_t)_{t\in (0,T)}\big)\big)=(y_t)_{t\in (0,T)}$. Therefore, we have shown that $\rr_1 \circ \rr_2=\id_{\mathcal{C}'}$, analogously we can prove that $\rr_2 \circ \rr_1=\id_{\mathcal{C}''_N}$.
\end{proof}

In the following, we denote by~$\EVIkn(f)$ the set of all $\evikn$ gradient curves for~$f$, parametrized on some interval~$(0,T)$, with~$T\in(0,+\infty]$. Similarly, we denote by~$\EVI_\lambda(f_N)$ the set of all $\evi_\lambda$ gradient curves for $f_N$, contained in $\mathcal{D}[f]$ and parametrized on some interval $(0,T')$, with $T'\in(0,+\infty]$.
In particular, according to Proposition~\ref{prop:fdecreases}, we have that $\EVIkn(f), \EVI_\lambda(f_N) \subset \mathcal{C}'$. 

\begin{prop}\label{prop:correspondence}
    It holds that
    \[
    \rr_1\tparen{\EVI_{0,N}(f)}= \EVI_0(f_N)\cap \mathcal C''_N \quad \text{and} \quad \rr_2(\EVI_0(f_N)\cap \mathcal C''_N)= \EVI_{0,N}(f).
    \]
    In particular, $\rr_1|_{\EVI_{0,N}(f)}$ and $\rr_2|_{\EVI_0(f_N) \cap \mathcal C''_N}$ are bijections from $\EVI_{0,N}(f)$ and $\EVI_0(f_N)\cap \mathcal C''_N$ and vice versa.
\end{prop}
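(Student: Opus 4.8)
The plan is to push the two defining differential inequalities through the reparametrizations $\rr_1,\rr_2$ by a one-variable chain rule, exploiting that the case $K=0$ simplifies everything. Since $\skn(\theta)=\theta$ for $K=0$, the equivalent formulation in Proposition~\ref{prop:equivalentEVIKN}\ref{i:p:equivalentEVIKN:1} shows that a curve $(y_t)\in\EVI_{0,N}(f)$ is characterized by
\[
\frac{\de^+}{\de t}\frac{\di(y_t,z)^2}{2}\le N\Big(1-\frac{f_N(z)}{f_N(y_t)}\Big),
\]
whereas the $\evi_0$ condition for $f_N$ along a curve $(z_s)$ reads $\frac{\de^+}{\de s}\frac12\di(z_s,z)^2\le f_N(z)-f_N(z_s)$. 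A test point with $f_N(z)=+\infty$ makes the first inequality trivial (as $N<0$) and is excluded from the second, so in both cases only $z\in\mathcal{D}^\star[f]=\mathcal{D}[f_N]$ matters, and the constraint $\di(y_t,z)<\pi\sqrt{K/N}$ is vacuous. By Proposition~\ref{prop:fdecreases}, $\EVI_{0,N}(f)\subseteq\mathcal C'$, so $\rr_1$ is defined there and (by the computation following \eqref{eq:AlphaReparametrization}) maps into $\mathcal C''_N$; hence it will suffice to establish the two inclusions $\rr_1(\EVI_{0,N}(f))\subseteq\EVI_0(f_N)$ and $\rr_2(\EVI_0(f_N)\cap\mathcal C''_N)\subseteq\EVI_{0,N}(f)$, after which the claimed equalities and the bijectivity follow formally from the fact that $\rr_1,\rr_2$ are mutually inverse (Proposition~\ref{prop:inverse}).

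I will prove the first inclusion; the second is symmetric, with $\psi$ and \eqref{eq:psi} replacing $\varphi$ and \eqref{eq:phi}. Fix $(y_t)\in\EVI_{0,N}(f)$, put $(z_s)=\rr_1((y_t))=(y_{\varphi(s)})$, fix an admissible $z$, and abbreviate $D(t)\eqdef\frac12\di(y_t,z)^2$, so that $\frac12\di(z_s,z)^2=D(\varphi(s))$. The heart of the matter is the pointwise chain-rule bound
\[
\frac{\de^+}{\de s}D(\varphi(s))\le \varphi'_+(s)\,\frac{\de^+}{\de t}D(t)\Big|_{t=\varphi(s)},
\]
which holds at any $s$ where $\varphi$ has a strictly positive right-derivative $\varphi'_+(s)$: writing $k_h\eqdef\varphi(s+h)-\varphi(s)\downarrow0$, one factors the difference quotient as $\tfrac{D(\varphi(s)+k_h)-D(\varphi(s))}{k_h}\cdot\tfrac{k_h}{h}$ and uses that $k_h/h\to\varphi'_+(s)>0$ is a genuine limit, so that the $\limsup$ of the product equals $\varphi'_+(s)$ times the $\limsup$ of the first factor, itself bounded by $\frac{\de^+}{\de t}D|_{t=\varphi(s)}$. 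Inserting $\varphi'_+(s)=-\tfrac1N f_N(z_s)>0$ and the $\evion$ inequality at $t=\varphi(s)$ yields
\[
\frac{\de^+}{\de s}\frac{\di(z_s,z)^2}{2}\le -\frac1N f_N(z_s)\cdot N\Big(1-\frac{f_N(z)}{f_N(z_s)}\Big)=f_N(z)-f_N(z_s),
\]
which is exactly the $\evi_0$ inequality for $f_N$ at $s$. As this holds for every $s$ and every admissible $z$, we get $(z_s)\in\EVI_0(f_N)$.

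The delicate point, and the main obstacle, is that the construction only gives $\varphi'(s)=-\tfrac1N f_N(z_s)$ for \emph{almost every} $s$ (see \eqref{eq:phi}), while the chain rule above — and hence the EVI at \emph{every} $s$ — requires $\varphi$ to be right-differentiable everywhere with this value. This is exactly where $(y_t)\in\mathcal C'$ enters: since $t\mapsto f(y_t)$ is non-increasing and $\varphi$ is continuous and increasing, $s\mapsto f(z_s)$ is non-increasing, and combining this monotonicity with the lower semicontinuity of $f$ forces $s\mapsto f(z_s)$ — hence $s\mapsto f_N(z_s)$ — to be right-continuous (for $s\downarrow s_0$, monotonicity gives $\limsup f(z_s)\le f(z_{s_0})$ and lower semicontinuity the reverse bound). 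Since $\varphi$ is locally Lipschitz, it is the integral of its a.e.\ derivative $-\tfrac1N f_N(z_\cdot)$, and the right-continuity of this integrand upgrades the a.e.\ identity to a bona fide right-derivative $\varphi'_+(s_0)=-\tfrac1N f_N(z_{s_0})>0$ at \emph{every} $s_0$, closing the gap. The second inclusion then follows verbatim with the roles of $f$ and $f_N$, and of $\varphi$ and $\psi$, interchanged (again $\rr_2(\EVI_0(f_N)\cap\mathcal C''_N)\subseteq\mathcal C'$ by Proposition~\ref{prop:fdecreases}), completing the plan.
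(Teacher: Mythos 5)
Your proof is correct and follows essentially the same route as the paper: both establish the two inclusions by pushing the $\evi$ inequalities through $\varphi$ and $\psi$ via a chain rule for the right upper derivative, and then conclude equality and bijectivity from Proposition~\ref{prop:inverse}. The only difference is that you carefully justify the chain-rule step — in particular upgrading the a.e.\ identity $\varphi'(s)=-\tfrac1N f_N(z_s)$ to a genuine right-derivative at \emph{every} $s$ via the right-continuity of $s\mapsto f_N(z_s)$ — a point the paper's computation in \eqref{eq:changecorr} takes for granted.
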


\begin{proof}
    Take any $(y_t)_{t\in (0,T)}\in \EVI_{0,N}(f) (\subset \mathcal C')$  and consider 
    $(z_s)_{s\in (0,T')}=\rr_1\big((y_t)_{t\in (0,T)}\big)=(y_{\varphi(s)})_{s\in (0,T')}$. Then, given any $z\in \mathcal D[f_N]= \bar{\mathcal D}[f]$ and $s\in(0,T')$, we compute that 
    \begin{equation}\label{eq:changecorr}
    \begin{split}
         \frac{\de^+}{\de s} \frac{\di(z_s,z)^2}{2} &= 2 \frac{\de^+}{\de t}\bigg|_{t=\varphi(s)} \bigg[ \bigg( \frac{\di(y_t,z)}{2}\bigg)^2\bigg] \cdot \varphi'(t)
         \\
         &\leq N \braket{ 1 - \frac{f_N(z)}{f_N(z_s)}} \braket{ - \frac 1N f_N(z_s) }= f_N(z)-f_N(z_s),
    \end{split}
    \end{equation}
    where the inequality follows from the assumption that $(y_t)_{t\in (0,T)}\in \EVI_{0,N}(f)$.
    We have then shown that $\rr_1\tparen{\EVI_{0,N}(f)}\subset \EVI_0(f_N)\cap \mathcal C''_N$.
    
    On the other hand, take any $(z_s)_{s\in (0,T')}\in \EVI_0(f_N)\cap\mathcal C''_N$ and consider $(y_t)_{t\in (0,T)}=\rr_2\big((z_s)_{s\in (0,T')}\big)=(z_{\psi(t)})_{t\in (0,T)}$.
    Then, given any $z\in \bar{\mathcal D}[f]= \mathcal D[f_N]$ and $t\in(0,T)$, we compute that
    \begin{equation}\label{eq:changecorr2}
    \begin{split}
        \frac{\de^+}{\de t} \bigg[\bigg( \frac{\di(y_t,z)}{2}\bigg)^2\bigg] &= \frac 12 \frac{\de^+}{\de t}\bigg|_{s=\psi(t)} \bigg( \frac{\di(z_s,z)}{2}\bigg) \cdot  \psi'(t) \\
        &\leq [f_N(z) - f_N(z_s)] \frac{-N}{f_N(z_s)} = \frac{N}{2} \braket{ 1 - \frac{f_N(z)}{f_N(y_t)} },
    \end{split}
    \end{equation}
    where the inequality follows from the assumption that $(z_s)_{s\in (0,T')}\in \EVI_0(f_N)$. We have thus showed that $\rr_2\tparen{\EVI_0(f_N)\cap \mathcal C''_N}\subset \EVI_{0,N}(f)$.

    Finally, the inclusions $\rr_1\tparen{\EVI_{0,N}(f)}\subset \EVI_0(f_N)\cap \mathcal C''_N$ and $\rr_2(\EVI_0(f_N)\cap \mathcal C''_N)\subset\EVI_{0,N}(f) $, combined with Proposition \ref{prop:inverse}, are sufficient to conclude the proof.
\end{proof}

\subsection{Local characterization and self-improvement of $\evikn$ gradient curves}

In this section we prove a local characterization of $\evikn$ gradient curves, which is the analogous of Proposition 3.11 in \cite{MurSav}. This result is fundamental to prove the self-improvement property for $\evikn$ gradient curves, see Corollary \ref{cor:selfimpr}.

The local characterization relies on the following definitions.
    
\begin{definition}
    Let $g\colon X \to \bar \R$ be a functional on $X$ and  $(x_t)_{t\in [0,T]}$ be a geodesic starting at $x_0\in \mathcal{D}[g]$. We define the directional derivative as
    \begin{equation}
        g'(x_0;x)\eqdef  \liminf_{t\downarrow 0} \frac{g(x_t)-g(x_0)}{t}.
    \end{equation}
\end{definition}

\begin{definition}
    Let $(y_t)_{t\in (0,T)}$ be a continuous curve and $(z_t)_{t\in (0,1)}$ be the geodesic emanating from $z_0=y_{t_0}$, for some fixed $t_0\in (0,T)$. We define the quantity
    \begin{equation}\label{eq:def[]}
    \begin{split}
         [\dot y,z]_{t_0} \eqdef  \lim_{s \downarrow 0} \frac{1}{2s} \frac{\de^+}{\de t} \bigg|_{t=t_0} \di^2 (y_t,z_s) &= \sup_{0< s\leq 1} \frac{1}{2s} \frac{\de^+}{\de t}  \bigg|_{t=t_0}\di^2 (y_t,z_s)\\
         &= \sup_{0< s\leq 1} \limsup_{t\downarrow t_0} \frac{\di^2(y_t,z_s)-\di^2(y_{t_0},z_s)}{2s (t-t_0)},
    \end{split}
    \end{equation}
    where the last equality is a consequence of Lemma 3.13 in \cite{MurSav}.
\end{definition}

\begin{lemma}\label{lem:chain}
    Fix~$x_0\in \mathcal{D}[f]$. Then, 
    \begin{equation}
        f_N'(x_0;x) = - \frac{1}{N} f_N(x_0) f'(x_0;x), \qquad x\in X. \qedhere
    \end{equation}
\end{lemma}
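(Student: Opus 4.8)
The plan is to establish the chain-rule identity by unwinding the definitions of both directional derivatives and applying the standard differentiability of the exponential. Fix $x_0\in\mathcal{D}[f]$, which by definition means $f(x_0)\in\R$ and hence $f_N(x_0)=e^{-f(x_0)/N}$ is a finite, strictly positive real number. Fix also an arbitrary $x\in X$ and let $(x_t)_{t\in[0,1]}$ be a geodesic starting at $x_0$ with endpoint $x$, along which the two directional derivatives $f'(x_0;x)$ and $f_N'(x_0;x)$ are computed.

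First I would write out the incremental quotient defining $f_N'(x_0;x)$ and factor it through the quotient defining $f'(x_0;x)$. Writing $a\eqdef f(x_0)$ and $a_t\eqdef f(x_t)$, and using $f_N(x_t)=e^{-a_t/N}$, the idea is the elementary algebraic identity
\begin{equation}
\frac{f_N(x_t)-f_N(x_0)}{t} = e^{-a/N}\cdot\frac{e^{-(a_t-a)/N}-1}{t} = e^{-a/N}\cdot\frac{e^{-(a_t-a)/N}-1}{-(a_t-a)/N}\cdot\Bigl(-\frac{1}{N}\Bigr)\cdot\frac{a_t-a}{t}.
\end{equation}
As $t\downarrow 0$ the middle factor $\frac{e^{u}-1}{u}$ (with $u=-(a_t-a)/N$) tends to $1$, provided $a_t\to a$. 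The key observation is therefore that the $\liminf$ defining $f_N'(x_0;x)$ transports correctly through this factorization onto the $\liminf$ defining $f'(x_0;x)$, multiplied by the positive constant $-\tfrac1N f_N(x_0)$.

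The step requiring care is the interchange of $\liminf_{t\downarrow 0}$ with this factorization, since $f'(x_0;x)$ is defined as a $\liminf$ rather than a genuine limit. Two points must be handled. First, I must argue that $a_t=f(x_t)\to a=f(x_0)$ as $t\downarrow 0$, so that the correcting factor $\frac{e^{u}-1}{u}$ converges to $1$; this does not follow from mere lower semicontinuity of $f$, so I expect this to be the main obstacle. One can address it along the minimizing sequence: taking $t_k\downarrow 0$ realizing the $\liminf$ for $f'(x_0;x)$, lower semicontinuity gives $\liminf a_{t_k}\ge a$, while if some subsequence had $a_{t_k}\to+\infty$ the quotient $(a_{t_k}-a)/t_k$ would diverge to $+\infty$ and thus not realize a finite $\liminf$; a short argument then pins down $a_{t_k}\to a$ on the relevant sequence (and the case $f'(x_0;x)=\pm\infty$ is treated separately, consistently with the convention $-\tfrac1N f_N(x_0)>0$). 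Second, because $-\tfrac1N f_N(x_0)$ is a strictly positive constant, multiplication by it commutes with $\liminf$, so once the correcting factor is shown to converge to $1$ the identity
\begin{equation}
f_N'(x_0;x)=\liminf_{t\downarrow 0}\frac{f_N(x_t)-f_N(x_0)}{t} = -\frac1N f_N(x_0)\,\liminf_{t\downarrow 0}\frac{f(x_t)-f(x_0)}{t} = -\frac1N f_N(x_0)\,f'(x_0;x)
\end{equation}
follows, completing the proof.
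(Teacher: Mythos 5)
Your proof is correct and follows essentially the same route as the paper's: both rest on the first-order expansion $f_N(x_t)-f_N(x_0)=-\tfrac{1}{N}f_N(x_0)\big(f(x_t)-f(x_0)\big)+o\big(f(x_t)-f(x_0)\big)$ together with the observation that, along any sequence realizing a finite $\liminf$, lower semicontinuity and boundedness of the difference quotients force $f(x_{t_k})\to f(x_0)$, so the correcting factor tends to $1$ and the positive constant $-\tfrac{1}{N}f_N(x_0)$ factors out. If anything, your subsequence-by-subsequence treatment of where the two $\liminf$s are attained is more careful than the paper's cruder dichotomy on whether $\limsup_{t\to 0}f(x_t)$ exceeds $f(x_0)$.
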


\begin{proof}
    Recall that $f$ is lower semicontinuous. If $\limsup_{t\to 0} f(x_t) > f(x_0)$, then both the right-hand side and the left-hand side are equal to $+\infty$. Otherwise, we have that $\limsup_{t\to 0} f(x_t) = \liminf_{t\to 0} f(x_t) = f(x_0)$ and to conclude the proof it is sufficient to observe that 
    \[
        f_N(x_t)-f_N(x_0) = - \frac{1}{N} f_N(x_0) \tbraket{ f(x_t)-f(x_0) } + o \tparen{f(x_t)-f(x_0)}. \qedhere
    \]
\end{proof}

\begin{lemma}\label{lem:KNconv}
    Let $f$ be $(K,N)$-convex and $(x_s)_{s\in [0,1]}$ be a geodesic such that $x_0\in \mathcal{D}[f]$ and $\di\eqdef  \di(x_0,x_1)<\pi \sqrt{ K/N}$ when $K<0$. Then, it holds that
    \begin{equation}
        f'_N(x_0;x) \leq \frac{\di}{\skn (\di)} \tbraket{f_N(x_1)- \ckn (\di) f_N(x_0)}.
    \end{equation}
\end{lemma}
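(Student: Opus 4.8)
The plan is to extract the claimed differential inequality directly from the definition of $(K,N)$-convexity applied to the geodesic $(x_s)_{s\in[0,1]}$, by taking the one-sided derivative at $s=0$ of the convexity inequality \eqref{eq:convinequality} and then converting the resulting estimate on $f_N$ into the directional derivative $f'_N(x_0;x)$. The point is that the right-hand side of the desired inequality is precisely what one gets by differentiating the distortion coefficients in $s$ at $s=0$.

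First I would write down the convexity inequality along $(x_s)$: by Definition~\ref{def:KNconv} (with the roles set so that $x_0$ is the left endpoint and $x_1$ the right endpoint), we have, using the abbreviation $\di = \di(x_0,x_1)$,
\begin{equation}
    f_N(x_s) \leq \sigma_{K,N}^{(1-s)}(\di)\, f_N(x_0) + \sigma_{K,N}^{(s)}(\di)\, f_N(x_1), \qquad s\in[0,1].
\end{equation}
Subtracting $f_N(x_0)$, dividing by $s$, and noting that $\sigma_{K,N}^{(0)}(\di)=0$ and $\sigma_{K,N}^{(1)}(\di)=1$, I would then take $\liminf_{s\downarrow 0}$. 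The key computation is the value of the $s$-derivatives of the distortion coefficients at $s=0$: since $\sigma_{K,N}^{(s)}(\di)=\skn(s\di)/\skn(\di)$, one finds
\begin{equation}
    \frac{\de}{\de s}\Big|_{s=0^+}\sigma_{K,N}^{(s)}(\di) = \frac{\di\,\skn'(0)}{\skn(\di)} = \frac{\di}{\skn(\di)},
\end{equation}
using $\skn'(0)=1$, and similarly, since $\sigma_{K,N}^{(1-s)}(\di)=\skn((1-s)\di)/\skn(\di)$,
\begin{equation}
    \frac{\de}{\de s}\Big|_{s=0^+}\sigma_{K,N}^{(1-s)}(\di) = -\frac{\di\,\skn'(\di)}{\skn(\di)} = -\frac{\di\,\ckn(\di)}{\skn(\di)},
\end{equation}
where I use the elementary identity $\skn'(\theta)=\ckn(\theta)$, which holds case-by-case for $K<0$, $K=0$, $K>0$ directly from the definitions of $\skn,\ckn$. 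Combining these, the $\liminf_{s\downarrow 0}$ of the difference quotient for $f_N$ is bounded above by
\begin{equation}
    -\frac{\di\,\ckn(\di)}{\skn(\di)}\,f_N(x_0) + \frac{\di}{\skn(\di)}\,f_N(x_1) = \frac{\di}{\skn(\di)}\tbraket{f_N(x_1)-\ckn(\di)f_N(x_0)},
\end{equation}
which is exactly the right-hand side of the claim. Since $f_N'(x_0;x)=\liminf_{s\downarrow 0}(f_N(x_s)-f_N(x_0))/s$ by the definition of the directional derivative, this yields the assertion.

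The main obstacle I anticipate is purely a matter of care with the limit manipulation rather than a conceptual difficulty: one must ensure that the $\liminf$ of a sum (from the convexity inequality, with two $s$-dependent coefficient factors multiplying fixed finite values $f_N(x_0),f_N(x_1)$) can be controlled by the sum of the derivatives of the coefficients. Because $f_N(x_0)$ and $f_N(x_1)$ are fixed constants and the coefficient functions $s\mapsto\sigma_{K,N}^{(s)}(\di)$, $s\mapsto\sigma_{K,N}^{(1-s)}(\di)$ are differentiable at $s=0^+$, the right-hand side of the convexity inequality is itself differentiable in $s$ at $0^+$, so the $\liminf$ of the difference quotient of $f_N(x_s)$ is bounded above by the genuine derivative of this affine-in-$(f_N(x_0),f_N(x_1))$ upper bound; no indeterminacy arises. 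A secondary technical point is the case $K<0$, where the condition $\di<\pi\sqrt{N/K}$ guarantees that $\skn(\di)>0$ so that all the quotients $\di/\skn(\di)$ are well defined and positive, and the coefficients $\sigma_{K,N}^{(t)}(\di)$ lie in the non-degenerate branch of \eqref{E:sigma}; this is ensured by the hypothesis and requires only a remark. One should also note that if $f_N(x_1)=+\infty$ (i.e.\ $x_1\notin\mathcal D^\star[f]$) the inequality is trivial, and if $f_N(x_0)=0$ (i.e.\ $f(x_0)=+\infty$) the hypothesis $x_0\in\mathcal D[f]$ excludes it, so all terms are genuinely finite.
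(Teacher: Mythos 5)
Your proof is correct and follows essentially the same route as the paper's: apply the $(K,N)$-convexity inequality along the geodesic, form the difference quotient at $s=0$, and use the limits $\lim_{s\downarrow 0}\sigma_{K,N}^{(s)}(\di)/s=\di/\skn(\di)$ and $\lim_{s\downarrow 0}(\sigma_{K,N}^{(1-s)}(\di)-1)/s=-\di\,\ckn(\di)/\skn(\di)$. (Only a cosmetic slip in your final aside: since $N<0$, $f_N(x_0)=0$ corresponds to $f(x_0)=-\infty$, not $+\infty$; either way it is excluded by $x_0\in\mathcal{D}[f]$.)
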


\begin{proof}
Directly from the definition of directional derivative of $f_N$ and the $(K, N)$-convexity of $f$, we obtain the following inequality
\[
f'_N(x_0; x) \le \liminf_{t \downarrow 0} \dfrac{\sigma_{K, N}^{(t)}(\di) f_N(x_1) + \big(\sigma_{K, N}^{(1-t)}(\di) -1 \big) f_N(x_0)}{t}.
\]
Hence, the validity of the following identities
\[
\lim_{t \downarrow 0} \dfrac{\sigma_{K, N}^{(t)}(\di)}{t} = \dfrac{\di}{\skn(\di)} \quad \text{ and } \quad \lim_{t \downarrow 0} \dfrac{\sigma_{K, N}^{(1-t)}(\di) - 1}{t} = - \dfrac{\ckn(\di)}{\skn(\di)} \di
\]
leads to the desired conclusion.
\end{proof}

\begin{prop}
     If $(y_t)_{t\in (0,T)}$ is a $\evikn$ gradient curve for $f$, then, for every $t\in (0,T)$ and every geodesic $(z_s)_{s\in [0,1]}$ emanating from $z_0=y_t$, with $\di(z_0,z_1)<\pi \sqrt{ K/N}$ when $K<0$, it holds that
     \begin{equation}\label{eq:diffchar}
         [\dot y, z]_t\leq f'(y_t;z).
     \end{equation}
     Conversely, if $(y_t)_{t\in (0,T)}\in \mathcal C$ satisfies \eqref{eq:diffchar} and $f$ is $(K,N)$-convex, then $(y_t)_{t\in (0,T)}$ is an $\evikn$ gradient curve for $f$.
\end{prop}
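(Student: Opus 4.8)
The plan is to prove the two implications separately, in both cases passing through the equivalent formulation~\ref{i:p:equivalentEVIKN:2} of Proposition~\ref{prop:equivalentEVIKN} and invoking the chain rule of Lemma~\ref{lem:chain} together with the convexity estimate of Lemma~\ref{lem:KNconv}. For the \emph{direct implication}, assume $(y_t)_{t\in(0,T)}$ is an $\evikn$ gradient curve, fix $t\in(0,T)$ and a geodesic $(z_s)_{s\in[0,1]}$ with $z_0=y_t$, and write $\di\eqdef\di(z_0,z_1)$. Since $\di(y_t,z_s)=s\di\le\di$, each $z_s$ is an admissible test point (in particular $s\di<\pi\sqrt{K/N}$ when $K<0$) whenever $z_s\in\bar{\mathcal D}[f]$, and formulation~\ref{i:p:equivalentEVIKN:2} applied with test point $z_s$, divided by $s$, gives
\begin{equation}
\frac1s\,\frac{\de^+}{\de t}\frac{\di(y_t,z_s)^2}{2}\le \frac{N\di}{\skn(s\di)}\braket{\ckn(s\di)-\frac{f_N(z_s)}{f_N(y_t)}}.
\end{equation}
By definition of the bracket, the left-hand side tends to $[\dot y,z]_t$ as $s\downarrow0$, so it suffices to bound the $\liminf$ of the right-hand side by $f'(y_t;z)$. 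I would split the bracket as $[\ckn(s\di)-1]+[1-f_N(z_s)/f_N(y_t)]$: the first part contributes $\tfrac{N\di}{\skn(s\di)}[\ckn(s\di)-1]\to0$, since $\ckn(\theta)-1=O(\theta^2)$ while $\skn(\theta)\sim\theta$, whereas the second, written as $-\tfrac{N\di}{f_N(y_t)}\cdot\tfrac{s}{\skn(s\di)}\cdot\tfrac{f_N(z_s)-f_N(y_t)}{s}$, has its first two factors converging to the positive constant $-N/f_N(y_t)$ (as $\tfrac{s}{\skn(s\di)}\to\di^{-1}$) and therefore $\liminf$ equal to $-\tfrac{N}{f_N(y_t)}f_N'(y_t;z)$. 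By Lemma~\ref{lem:chain} this equals $f'(y_t;z)$, giving $[\dot y,z]_t\le f'(y_t;z)$.

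For the \emph{converse}, suppose $f$ is $(K,N)$-convex and \eqref{eq:diffchar} holds, and fix $t\in(0,T)$ and $z\in\bar{\mathcal D}[f]$ with $\di\eqdef\di(y_t,z)<\pi\sqrt{K/N}$ when $K<0$. Let $(z_s)_{s\in[0,1]}$ be a geodesic from $z_0=y_t$ to $z_1=z$ realizing the $(K,N)$-convexity inequality~\eqref{eq:convinequality}. Taking $s=1$ in the supremum characterization of $[\dot y,z]_t$ gives $\frac{\de^+}{\de t}\tfrac{\di(y_t,z)^2}{2}\le[\dot y,z]_t$, which \eqref{eq:diffchar} bounds by $f'(y_t;z)$. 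Applying Lemma~\ref{lem:KNconv} along $(z_s)$, then the chain rule $f_N'(y_t;z)=-\tfrac1N f_N(y_t)f'(y_t;z)$ of Lemma~\ref{lem:chain}, and dividing by the positive factor $-\tfrac1N f_N(y_t)$, yields $f'(y_t;z)\le\tfrac{N\di}{\skn(\di)}[\ckn(\di)-f_N(z)/f_N(y_t)]$. Chaining the three inequalities reproduces formulation~\ref{i:p:equivalentEVIKN:2}, so $(y_t)_{t\in(0,T)}$ is an $\evikn$ gradient curve by Proposition~\ref{prop:equivalentEVIKN}.

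The delicate step, and the main obstacle, is the passage to the limit in the direct implication, where the right-hand side is an indeterminate $0\cdot\infty$ product (the factor $N\di/\skn(s\di)$ diverges while the bracket vanishes), so the cancellation must be tracked precisely and the lower semicontinuity of $f$ accommodated. Concretely, if $z_s\in\bar{\mathcal D}[f]$ fails for arbitrarily small $s$, then $f_N(z_s)=+\infty$ there; since such parameters cannot lower the relevant $\liminf$, one may restrict to those $s$ with $z_s\in\bar{\mathcal D}[f]$, and if none of these accumulate at $0$ then $f'(y_t;z)=+\infty$ and the asserted inequality is trivial.
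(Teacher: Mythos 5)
Your proof is correct and follows essentially the same route as the paper's: both directions pass through the equivalent formulations of Proposition~\ref{prop:equivalentEVIKN} combined with Lemma~\ref{lem:chain} and Lemma~\ref{lem:KNconv}, with the limit $s\downarrow 0$ in the direct implication handled by the same expansion of $\skn$ and $\ckn$. The only differences are cosmetic — you use formulation~\ref{i:p:equivalentEVIKN:2} where the paper uses~\ref{i:p:equivalentEVIKN:1} in the direct implication (the discarded $K$-terms vanish in the limit either way), and your closing remark about test points $z_s\notin\bar{\mathcal D}[f]$ addresses a detail the paper's proof passes over silently.
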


\begin{proof}
    To prove the first part of the statement, we fix $t\in (0,T)$ and a geodesic $(z_s)_{s\in [0,1]}$ emanating from $z_0=y_t$, with $\di(z_0,z_1)<\pi \sqrt{ K/N}$ if $K<0$. As $(y_t)_{t\in (0,T)}$ is an $\evikn$ gradient curve for $f$, from item $(i)$ Proposition \ref{prop:equivalentEVIKN}, applied with $z=z_s$, we know that
    \begin{equation}
    \begin{split}
        \frac{\de^+}{\de t} \frac{\di(y_t,z_s)^2}{2}  &\leq \frac{N \, \di(y_t,z_s)}{ \skn \big(\di(y_t,z_s)\big)} \braket{ 1 - \frac{f_N(z_s)}{f_N(y_t)} } - 2 K \di(y_t,z_s) \frac{\skn \big(\di(y_t,z_s)/2\big)^2}{\skn \big(\di(y_t,z_s)\big)}
        \\
         &=\frac{s N \di}{ \skn (s\, \di)} \braket{ 1 - \frac{f_N(z_s)}{f_N(y_t)} } - 2 s K \di \frac{\skn(s\di/2)^2}{\skn (s\, \di)},
    \end{split}
    \end{equation}
    where we denoted $\di\eqdef \di(y_t,z_1)$ and used that $(z_s)_{s\in [0,1]}$ has constant speed. Recalling \eqref{eq:def[]} and that 
    \begin{equation}
        \skn (\theta) = \theta + o (\theta) \qquad \text{as } \theta \to 0,
    \end{equation}
    we deduce that 
    \begin{equation}
        [\dot y, z]_t\leq \liminf_{s\downarrow 0} \frac{s N \di}{ \skn (s\di)} \braket{ 1 - \frac{f_N(z_s)}{f_N(y_t)} } = - N \frac{1}{f_N(y_t)} \liminf_{s\downarrow 0}  \frac{f_N(z_s)-f_N(y_t)}{s}= f'(y_t;z),
    \end{equation}
    where the last equality is a consequence of Lemma \ref{lem:chain}.

    To prove the second part of the statement, we fix a time $t\in(0,T)$ and any $z\in \mathcal{D}[f]$, with $\di(y_t,z)<\pi \sqrt{ K/N}$ if $K<0$. Let $(z_s)_{s\in [0,1]}$ be a geodesic connecting $z_0=y_t$ and $z_1=z$, then using \eqref{eq:def[]}, our assumption \eqref{eq:diffchar} and Lemmas \ref{lem:chain} and \ref{lem:KNconv} we deduce that
    \begin{equation}
    \begin{split}
        \frac{\de^+}{\de t} \frac{\di(y_t,z)^2}{2} \leq [\dot y,z]_t \leq f'(y_t;z) = -N &\frac{1}{f_N(y_t)} f'_N(y_t;z) \\
        &\leq \frac{N \, \di(y_t,z)}{ \skn \big(\di(y_t,z)\big)} \braket{ \ckn\big(\di(y_t,z)
        \big) - \frac{f_N(z)}{f_N(y_t)} }.
    \end{split}
    \end{equation}
    The thesis follows from Proposition~\ref{prop:equivalentEVIKN}\ref{i:p:equivalentEVIKN:2}.
\end{proof}

\begin{corollary}[Self-improvement of $\evi$]\label{cor:selfimpr}
    Let $K, K' \in \R$ and $N,N'\in (-\infty, 0)$ be such that $K\leq K'$ and $N\geq N'$. 
    If $(y_t)_{t\in (0,T)}$ is a $\evikn$ gradient curve for $f$ and $f$ is $(K',N')$-convex, then $(y_t)_{t\in (0,T)}$ is a $\evi_{K',N'}$ gradient curve for $f$.
\end{corollary}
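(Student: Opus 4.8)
The plan is to run the local characterization of the previous proposition in both directions, exploiting the key feature that the characterizing inequality~\eqref{eq:diffchar}, $[\dot y, z]_t \leq f'(y_t;z)$, carries no dependence on $K$ or $N$: these parameters enter only through the set of geodesics for which the inequality is required. First I would apply the forward implication to the given $\evikn$ gradient curve $(y_t)_{t\in(0,T)}$, which is in particular a continuous curve contained in $\mathcal{D}[f]$, to obtain that~\eqref{eq:diffchar} holds at every $t\in(0,T)$ for every geodesic $(z_s)_{s\in[0,1]}$ emanating from $y_t$ with $\di(z_0,z_1)<\pi\sqrt{K/N}$ (and for every such geodesic, with no length restriction, when $K\geq 0$). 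The goal is then to insert~\eqref{eq:diffchar} into the converse implication, now read with the parameters $(K',N')$ and using the hypothesis that $f$ is $(K',N')$-convex, so as to conclude that $(y_t)_{t\in(0,T)}$ is an $\evi_{K',N'}$ gradient curve.

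The one delicate point is matching the admissible ranges of geodesics: the forward step yields~\eqref{eq:diffchar} only for geodesics shorter than $\pi\sqrt{K/N}$, whereas the converse step for $(K',N')$ requires it up to length $\pi\sqrt{K'/N'}$ (or for all geodesics if $K'\geq 0$). I expect this mismatch to be the main obstacle, and I would dispose of it by a homogeneity observation rather than by a case analysis of the two thresholds. Namely, fix $t$ and a geodesic $(z_s)_{s\in[0,1]}$ emanating from $z_0=y_t$, and for $r\in(0,1]$ consider its reparametrized restriction $(\tilde z_s)_{s\in[0,1]}\eqdef (z_{rs})_{s\in[0,1]}$, which is the constant-speed geodesic from $y_t$ to $z_r$. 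A change of variables $u=rs$ in the limit definition~\eqref{eq:def[]} gives $[\dot y,\tilde z]_t = r\,[\dot y, z]_t$, and the same substitution in the $\liminf$ defining the directional derivative gives $f'(y_t;\tilde z)=r\,f'(y_t;z)$. Since both sides of~\eqref{eq:diffchar} are thus positively $1$-homogeneous under truncation, the inequality holds for $(z_s)$ if and only if it holds for every restriction $(\tilde z_s)$; in particular, its validity for arbitrarily short geodesics emanating from $y_t$ is equivalent to its validity for \emph{all} geodesics emanating from $y_t$, irrespective of length.

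With this in hand the argument closes immediately. By the forward implication together with the homogeneity above, \eqref{eq:diffchar} holds at every $t\in(0,T)$ for every geodesic emanating from $y_t$, with no restriction on $\di(z_0,z_1)$. In particular it holds for all geodesics with $\di(z_0,z_1)<\pi\sqrt{K'/N'}$ when $K'<0$, which is precisely the hypothesis needed to invoke the converse implication of the previous proposition with the parameters $(K',N')$. Since $f$ is $(K',N')$-convex and $(y_t)_{t\in(0,T)}\subset\mathcal{D}[f]$ is continuous, that converse implication yields that $(y_t)_{t\in(0,T)}$ is an $\evi_{K',N'}$ gradient curve for $f$, as desired. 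I note that the ordering assumptions $K\leq K'$ and $N\geq N'$ render the thresholds compatible already in the purely negative regime $K,K'<0$, where $\pi\sqrt{K'/N'}\leq\pi\sqrt{K/N}$, so the homogeneity step is strictly needed only to treat uniformly the mixed case $K<0\leq K'$.
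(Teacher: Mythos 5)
Your proof is correct and follows exactly the route the paper intends: the corollary is stated without proof as an immediate consequence of the preceding local-characterization proposition, applied first in the forward direction with $(K,N)$ and then in the converse direction with $(K',N')$ using the $(K',N')$-convexity of $f$. Your homogeneity observation ($[\dot y,\tilde z]_t=r[\dot y,z]_t$ and $f'(y_t;\tilde z)=rf'(y_t;z)$ under the truncation $\tilde z_s=z_{rs}$) is a correct and welcome extra step that rigorously disposes of the mismatch between the length thresholds $\pi\sqrt{K/N}$ and $\pi\sqrt{K'/N'}$, a point the paper passes over in silence.
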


\begin{remark}
    As an interesting consequence of the previous result, combined with Proposition \ref{prop:correspondence} we obtain that, if $f$ is $(K,N)$-convex for some $K\geq 0$ and $N\in (-\infty, 0)$, then 
    \begin{equation}
        \rr_1\tparen{\EVI_{K,N}(f)}= \EVI_0(f_N)\cap \mathcal C''_N\qquad \text{and} \qquad\rr_2\tparen{\EVI_0(f_N)\cap \mathcal C''_N}= \EVI_{K,N}(f).
    \end{equation}
    This is consistent with Proposition \ref{prop:ftofN}\ref{i:p:ftofN:1}.
\end{remark}

Taking advantage of Corollary \ref{cor:selfimpr}, we can also prove an analogue of Proposition \ref{prop:correspondence} for negative values of $K$, assuming the functional $f$ to be $(K,N)$-convex and bounded. 

\begin{prop}\label{prop:correspondence2}
     If $f$ is bounded above by $M$ and $(K,N)$-convex for some $K< 0$ and $N<0$, then $\rr_1\tparen{\EVI_{K,N}(f)}\subset \EVI_{\tilde M}(f_N) \cap \mathcal C''_N$ and $\rr_2(\EVI_{\tilde M}(f_N)\cap \mathcal C''_N)\subset \EVI_{K,N}(f)$, where $\tilde M \eqdef  -\frac{K}{N} e^{-M/N}$.
     In particular, $\rr_1|_{\EVI_{K,N}(f) }$ and $\rr_2|_{\EVI(f_N)_{\tilde M} \cap \mathcal C''_N}$ are bijections from $\EVI_{K,N}(f) $ and $\EVI_{\tilde M}(f_N)\cap \mathcal C''_N$ and vice versa.
\end{prop}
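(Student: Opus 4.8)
The strategy is to run both inclusions through the \emph{differential} characterisation of $\evikn$ gradient curves established above, whose decisive feature is that the condition \eqref{eq:diffchar}, i.e. $[\dot y, w]_t \le f'(y_t; w)$ along every geodesic $w$ emanating from $y_t$, does \emph{not} involve $K$ or $N$. This is exactly the mechanism underlying the self-improvement Corollary~\ref{cor:selfimpr}, and it is what lets us bypass the distortion coefficients $\skn,\ckn$. In Proposition~\ref{prop:correspondence} the choice $K=0$ makes $\skn(\theta)=\theta$, so that the $\evi$ inequality itself reparametrises exactly under $\rr_1,\rr_2$; for $K<0$ these coefficients no longer cancel, and one must instead transfer the infinitesimal inequality \eqref{eq:diffchar}, where all distortion factors linearise to $1$.

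First I would record how \eqref{eq:diffchar} transforms under reparametrisation. Writing $(z_s)=\rr_1\big((y_t)\big)=(y_{\varphi(s)})$, the chain rule for the one-sided derivative (as in the computation \eqref{eq:changecorr}) together with $\varphi'(s)=-\frac{1}{N}f_N(z_s)>0$ from \eqref{eq:phi} gives, for every geodesic $(w_\sigma)_{\sigma\in[0,1]}$ emanating from $z_s=y_{\varphi(s)}$,
\[
[\dot z, w]_s = \varphi'(s)\,[\dot y, w]_{\varphi(s)}, \qquad f_N'(z_s; w) = \varphi'(s)\, f'(z_s; w),
\]
the first identity using that $\varphi'(s)$ is independent of the geodesic parameter $\sigma$ and may thus be pulled through the supremum in \eqref{eq:def[]}, and the second being Lemma~\ref{lem:chain}. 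Since $\varphi'(s)>0$, dividing shows that $[\dot z, w]_s \le f_N'(z_s; w)$ holds for all $w$ if and only if \eqref{eq:diffchar} holds for $(y_t)$ at time $\varphi(s)$; the analogous equivalence holds along $\rr_2$, with \eqref{eq:psi} in place of \eqref{eq:phi}.

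With this correspondence the two inclusions are immediate. For the first, let $(y_t)\in\EVIkn(f)$; by Proposition~\ref{prop:fdecreases} it lies in $\mathcal C'$, so $(z_s)=\rr_1\big((y_t)\big)\in\mathcal C''_N$, and the forward implication of the local characterisation gives \eqref{eq:diffchar}, whence $[\dot z, w]_s\le f_N'(z_s; w)$ for all $w$. Since $f\le M$, Proposition~\ref{prop:ftofN}\ref{i:p:ftofN:2} ensures that $f_N$ is $\tilde M$-convex with $\tilde M=-\frac{K}{N}e^{-M/N}$; the $\evi_\lambda$ local characterisation \cite[Prop.~3.11]{MurSav} then yields $(z_s)\in\EVI_{\tilde M}(f_N)$, hence $(z_s)\in\EVI_{\tilde M}(f_N)\cap\mathcal C''_N$. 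The second inclusion is symmetric: for $(z_s)\in\EVI_{\tilde M}(f_N)\cap\mathcal C''_N$, \cite[Prop.~3.11]{MurSav} gives $[\dot z, w]_s\le f_N'(z_s; w)$ for all $w$, which transfers to \eqref{eq:diffchar} for $(y_t)=\rr_2\big((z_s)\big)$, and since $f$ is $(K,N)$-convex the converse implication of the local characterisation gives $(y_t)\in\EVIkn(f)$. The bijection statements then follow from these two inclusions together with $\rr_1\circ\rr_2=\id$ and $\rr_2\circ\rr_1=\id$ (Proposition~\ref{prop:inverse}).

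The point requiring care is a mismatch of admissible test directions in the first inclusion: the local characterisation only delivers \eqref{eq:diffchar} along geodesics of length below the threshold $\pi\sqrt{K/N}$ appearing in the $\evikn$ inequality (when $K<0$), whereas \cite[Prop.~3.11]{MurSav} and the definition of $\evi_{\tilde M}(f_N)$ require the infinitesimal inequality along geodesics of every length. This is resolved by observing that both $[\dot y, w]_t$ and $f'(y_t; w)$ scale linearly under constant-speed reparametrisation of $w$: replacing $(w_\sigma)$ by $\sigma\mapsto w_{\varepsilon\sigma}$ multiplies each side by the same factor $\varepsilon$. Hence \eqref{eq:diffchar} along arbitrarily short geodesics already forces it along every geodesic, so the length restriction is immaterial at the infinitesimal level. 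No such issue arises in the second inclusion, where the $f_N$-side inequality is available for all lengths to begin with.
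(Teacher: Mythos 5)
Your proof is correct, but it takes a genuinely different route from the paper's. The paper works directly at the level of the $\evi$ inequalities: for $\rr_1$ it reparametrises Proposition~\ref{prop:equivalentEVIKN}\ref{i:p:equivalentEVIKN:2}, bounds the distortion coefficients on a small ball with a case split according to the sign of $f_N(z)-f_N(z_s)$, and then invokes the locality statement Proposition~\ref{prop:globalization} together with the $\tilde M$-convexity of $f_N$ from Proposition~\ref{prop:ftofN}\ref{i:p:ftofN:2}, removing an $\varepsilon$-loss at the end; for $\rr_2$ it first derives an $\evi_{\tilde K,N}$ inequality with a worse constant $\tilde K<K$ on each compact time interval and then upgrades it via the self-improvement Corollary~\ref{cor:selfimpr}. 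You instead channel both inclusions through the infinitesimal characterisation \eqref{eq:diffchar} and its $\evi_\lambda$ counterpart in \cite[Prop.~3.11]{MurSav}, exploiting that both sides of \eqref{eq:diffchar} transform by the same positive factor $\varphi'(s)$ under the reparametrisations (Lemma~\ref{lem:chain} plus the chain rule for the upper right Dini derivative) and scale linearly under shortening of the test geodesic, which disposes of the length restriction when $K<0$. This is cleaner and more unified: it removes the case analysis, the $\varepsilon$-approximation, and the two-step self-improvement, and there is no circularity since the local characterisation is proved before Proposition~\ref{prop:correspondence2}. What it buys less of is the quantitative by-product: the paper's computation also yields, essentially for free, the local inequality \eqref{eq:EVIlambdasublevel} of Proposition~\ref{prop:correspondence3} on sublevel sets without any upper bound on $f$, which is reused later in Theorem~\ref{thm:uniqueness}\ref{i:t:uniqueness:3} and Proposition~\ref{prop:prop}; your argument does not produce that. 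Two points you should make explicit: the identity $[\dot z,w]_s=\varphi'(s)\,[\dot y,w]_{\varphi(s)}$ requires $\varphi$ to be right-differentiable at \emph{every} $s$ with the stated derivative, which holds because $t\mapsto f(y_t)$ is non-increasing and lower semicontinuous along a continuous curve, hence right-continuous; and the converse direction of \cite[Prop.~3.11]{MurSav} only needs the inequality $[\dot z,w]_s\le f_N'(z_s;w)$ along the geodesics realising the $\tilde M$-convexity of $f_N$, which your transfer does supply since the forward direction of the paper's characterisation holds along every geodesic emanating from $y_t$.
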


\begin{proof}
    To prove the first part of the statement, we take any $(y_t)_{t\in (0,T)}\in \EVI_{K,N}(f)$ and consider $(z_s)_{s\in (0,T')}=\rr_1\tparen{(y_t)_{t\in (0,T)}}=(y_{\varphi(s)})_{s\in (0,T')}$, we want to prove that $(z_s)_{s\in (0,T')}\in \EVI_{\tilde M}(f_N)$.
    Given any $\varepsilon>0$, we consider $\bar \theta$ sufficiently small such that 
    \begin{equation}\label{eq:stima}
        \theta\frac{\skn(\theta / 2)^2}{\skn(\theta)} \leq \frac{1}{4}+ \varepsilon, \qquad \text{for every }\theta\leq \bar \theta.
    \end{equation}
    Then, given any $s\in(0,T')$ and $z\in B_{\bar \theta}(z_s) \subset \mathcal D[f_N] = \bar{\mathcal D}[f]= X$, proceeding as in the proof of Proposition~\ref{prop:correspondence} (in particular, see \eqref{eq:changecorr}) and using Proposition~\ref{prop:equivalentEVIKN}\ref{i:p:equivalentEVIKN:2}, we deduce that 
    \begin{equation}\label{eq:boh}
    \begin{split}
         \frac{\de^+}{\de s} \frac{\di(z_s,z) ^2}{2} \leq \frac{\di }{\skn(\di )} \braket{f_N(z)-f_N(z_s)} + 2 \frac{K}{N} f_N(z_s)\, \di \frac{\skn(\di  / 2)^2}{\skn(\di )} ,
    \end{split}
    \end{equation}
    where $\di=\di(z_s,z)$.
    Now, if $f_N(z)\leq f(z_s)$, using \eqref{eq:stima} and that $\skn(\theta)\leq \theta$, we have  
    \begin{equation}\label{eq:mah}
        \frac{\de^+}{\de s} \frac{\di(z_s,z) ^2}{2} \leq f_N(z)-f_N(z_s) +  2 \braket{\frac{1}{4}+ \varepsilon} \frac{K}{N} e^{-M/N}.
    \end{equation}
    While, if $f_N(z)\geq f(z_s)$, we observe that the right-hand side of \eqref{eq:boh} is equal to 
    \begin{equation}
    \begin{split}
          \braket{f_N(z)-f_N(z_s)} &\braket{ \frac{\di }{\skn(\di )}  - 2 \frac{K}{N} \di \frac{\skn(\di  / 2)^2}{\skn(\di )}} + 2 \frac{K}{N} f_N(z) \,\di \frac{\skn(\di  / 2)^2}{\skn(\di )} \\
          &=\braket{f_N(z)-f_N(z_s)} \braket{ \di \frac{\ckn(\di)}{\skn(\di )} } + 2 \frac{K}{N} f_N(z) \,\di \frac{\skn(\di  / 2)^2}{\skn(\di )}\\
          &\leq f_N(z)-f_N(z_s) + 2 \braket{\frac{1}{4}+ \varepsilon} \frac{K}{N} e^{-M/N},
    \end{split}
    \end{equation}
    showing that \eqref{eq:mah} holds for every $z\in B_{\bar \theta}(z_s)$. According to Proposition \ref{prop:globalization} combined with Proposition~\ref{prop:ftofN}\ref{i:p:ftofN:2}, this is sufficient to show that $(z_s)_{s\in (0,T')}\in \EVI_{(1+4\varepsilon)\tilde M}(f_N)$. The conclusion follows since $\varepsilon$ was arbitrary. 

    For the second part of the statement, take any $(z_s)_{s\in (0,T')}\in \EVI_{\tilde M}(f_N)\cap \mathcal C''_N$ consider $(y_t)_{t\in (0,T)}=\rr_2\big((z_s)_{s\in (0,T')}\big)=(z_{\psi(t)})_{t\in (0,T)}$.
    In particular, $(y_t)_{t\in (0,T)} \in \mathcal C'$ and therefore, given any $\bar T <T$, we have that
    \begin{equation}
        L:= \inf_{t\in (0,\bar T)} f(y_t) = \liminf_{t\to \bar T^-} f(y_t) \geq f(y_{\bar T}) >-\infty .
    \end{equation}
     Then, given any $t\in(0,\bar T)$ and $z\in  \mathcal D[f_N] = \bar{\mathcal D}[f]= X$, proceeding as in the proof of Proposition~\ref{prop:correspondence} (see in particular \eqref{eq:changecorr2}), we obtain that
    \begin{equation}\label{eq:noia}
    \begin{split}
        \frac{\de^+}{\de t} \frac{\di(y_t,z)^2}{2} \leq N \braket{1- \frac{f_N(z)} {f_N(y_t)}} + \frac{\tilde M N}{2} \frac{\di^2}{f_N(y_t)}.
    \end{split}
    \end{equation}
    Moreover, we observe that, for every $\theta$,
    \begin{equation}\label{eq:mmm}
        \theta \leq 4 \frac{\skn(\theta/2)^2}{\skn(\theta)}.
    \end{equation}
    Now, if $f_N(z)\geq f(y_t)$, using \eqref{eq:mmm} and that $\skn(\theta)\leq \theta$, we deduce  
    \begin{equation}
        \frac{\de^+}{\de t} \frac{\di(y_t,z)^2}{2} \leq \frac{N \, \di}{ \skn (\di)} \braket{ 1 - \frac{f_N(z)}{f_N(y_t)} } - 2 K \frac{e^{-M/N}}{e^{-L/N}}\, \di \frac{\skn (\di/2)^2}{\skn (\di)}.
    \end{equation}
    While, if $f_N(z)\leq f(y_t)$, the right-hand side of \eqref{eq:noia} is less than or equal to 
    \begin{equation}
    \begin{split}
        N \braket{1- \frac{f_N(z)} {f_N(z_s)} }& \di \frac{\ckn(\di)}{\skn(\di )}  - 2 K \frac{e^{-M/N}}{e^{-L/N}}\,  \di \frac{\skn (\di/2)^2}{\skn (\di)}
        \\
        &= N \braket{1- \frac{f_N(z)} {f_N(z_s)}} \braket{\frac{\di }{\skn(\di )}  - 2 \frac{K}{N} \di \frac{\skn(\di  / 2)^2}{\skn(\di )} } - 2 K \frac{e^{-M/N}}{e^{-L/N}}\, \di \frac{\skn (\di/2)^2}{\skn (\di)} 
        \\
        &= \frac{N \di }{\skn(\di )} \braket{1- \frac{f_N(z)} {f_N(z_s)}} - 2 \braket{1- \frac{f_N(z)} {f_N(z_s)}+\frac{e^{-M/N}}{e^{-L/N}} }  K \di \frac{\skn (\di/2)^2}{\skn (\di)}
        \\
        &\leq \frac{N \di }{\skn(\di )} \braket{1- \frac{f_N(z)} {f_N(z_s)} } - 2 \braket{1+\frac{e^{-M/N}}{e^{-L/N}} }  K \di \frac{\skn (\di/2)^2}{\skn (\di)}.
    \end{split}
    \end{equation}
    Therefore, according to Proposition~\ref{prop:equivalentEVIKN}\ref{i:p:equivalentEVIKN:1}, we have proved that $(y_t)_{t\in (0,\bar T)}$ is an $\evi_{\tilde K,N}$ gradient curve for $f$, with $\tilde K = K \braket{1+\frac{e^{-M/N}}{e^{-L/N}} }$. Finally, Corollary \ref{cor:selfimpr} allows to conclude that $(y_t)_{t\in (0,\bar T)} \in \EVI_{K,N}(f)$ and, as this is true for any $\bar T<T$, we obtain the thesis.
\end{proof}

\section{Fundamental properties of $\evikn$ gradient curves}\label{sec4}
As in the previous section, we fix a metric space $(X, \di)$, a \emph{densely defined} (i.e.~$\overline{\mathcal{D}[f]}=X$) \emph{lower semicontinuous} functional $f \colon X \to \bar \R$ and two constants, $K\in \R$ and $N<0$.

 The main aim of this section is to prove the uniqueness (assuming existence) of $\evikn$ gradient curves for $f$ starting from a point $y_0\in X$. As it will be clear from the proof of Theorem~\ref{thm:uniqueness} below, Proposition \ref{prop:correspondence} plays a fundamental role in proving uniqueness when $K\geq 0$. Instead, if $K<0$ and $f$ is not bounded from above, there is no reason for $f_N$ to be $\lambda$-convex for any $\lambda \in \R$. (Recall the bound \eqref{eq:KNdiff2}.)
 This in particular means that also the reparametrization of any $\evikn$ gradient curve for $f$ will not be an $\evi_\lambda$ gradient curve for $f_N$, for any $\lambda \in \R$. However, following verbatim the proof of Proposition \ref{prop:correspondence2}, we can deduce the following result, which does not even require the $(K,N)$-convexity of $f$:

\begin{prop}\label{prop:correspondence3}
      Let $(y_t)_{t\in (0,T)}$ be an $\evi_{K,N}$ gradient curve starting from $y_0\in \mathcal D [f]$, for some $K< 0$ and $N<0$. Then there exists $R>0$ such that for the curve $(z_s)_{s\in (0,T')}=\rr_1\big((y_t)_{t\in (0,T)}\big)$ and for every $M\geq f(y_0)$ it holds that 
     \begin{equation}\label{eq:EVIlambdasublevel}
        \frac{\de^+}{\de s} \frac{\di(z_s,z)^2}{2}   +  \tilde M \di(z_s,z)^2 \leq f_N(z)-f_N(z_s) \qquad s \in (0,T'), \quad z\in \{f\leq M\}\cap B_R(z_s),
    \end{equation}
    where $\tilde M \eqdef  -\frac{K}{N} e^{-M/N}$.
\end{prop}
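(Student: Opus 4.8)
The plan is to transcribe almost verbatim the first half of the proof of Proposition~\ref{prop:correspondence2} — the inclusion established through $\rr_1$ — while keeping the resulting inequality \emph{local} in the competitor variable instead of globalizing it. In Proposition~\ref{prop:correspondence2} the $(K,N)$-convexity of $f$ entered only through Proposition~\ref{prop:ftofN}\ref{i:p:ftofN:2} (to turn $f_N$ into a $\lambda$-convex function) and through the globalization step Proposition~\ref{prop:globalization}; since here we neither globalize nor pass to a genuine $\evi_\lambda$ curve, neither input is needed, which is precisely why the statement dispenses with the convexity of $f$.

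First I would fix, depending only on $K$ and $N$, a radius $R>0$ so small that $R<\pi\sqrt{K/N}$ (so that the admissible range in \eqref{eq:EVIKN} is respected) and that the elementary bound $\theta\,\skn(\theta/2)^2/\skn(\theta)\le \theta^2/2$ holds for every $\theta\le R$. This is possible because $\theta\,\skn(\theta/2)^2/\skn(\theta)=\tfrac14\theta^2+o(\theta^2)$. The point is that the target coefficient in \eqref{eq:EVIlambdasublevel} is $\tilde M$, rather than the $\tilde M/2$ dictated by the $\evi_{\tilde M}$ condition of Proposition~\ref{prop:correspondence2}: there is therefore exactly a factor $2$ of slack, and the non-sharp constant $\theta^2/2$ (in place of the sharp $\theta^2/4$) suffices, so that no auxiliary parameter $\varepsilon$ needs to be introduced and sent to zero as in \eqref{eq:stima}.

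Next, given $(y_t)_{t\in(0,T)}\in\EVI_{K,N}(f)$ starting at $y_0$ and its reparametrization $(z_s)_{s\in(0,T')}=\rr_1\tparen{(y_t)_{t\in(0,T)}}=(y_{\varphi(s)})$, I would fix $M\ge f(y_0)$, $s\in(0,T')$ and a competitor $z\in\{f\le M\}\cap B_R(z_s)$, and reproduce the chain-rule computation \eqref{eq:changecorr} (using $\varphi'(s)=-\tfrac1N f_N(z_s)$) together with Proposition~\ref{prop:equivalentEVIKN}\ref{i:p:equivalentEVIKN:2} applied to $(y_t)$ at $t=\varphi(s)$. This produces exactly \eqref{eq:boh}, that is, with $\di=\di(z_s,z)$,
\[
\frac{\de^+}{\de s}\frac{\di(z_s,z)^2}{2}\le \frac{\di}{\skn(\di)}\braket{f_N(z)-f_N(z_s)}+2\frac{K}{N}f_N(z_s)\,\di\frac{\skn(\di/2)^2}{\skn(\di)}.
\]
From here the two-case analysis of Proposition~\ref{prop:correspondence2} applies unchanged: if $f_N(z)\le f_N(z_s)$ one uses $\di/\skn(\di)\ge1$ to dominate the first summand by $f_N(z)-f_N(z_s)$ and bounds the second through $f_N(z_s)\le e^{-M/N}$; if $f_N(z)\ge f_N(z_s)$ one rewrites the right-hand side via $\di/\skn(\di)-2\tfrac{K}{N}\di\,\skn(\di/2)^2/\skn(\di)=\di\,\ckn(\di)/\skn(\di)\le1$ and bounds the residual term through $f_N(z)\le e^{-M/N}$. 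In both cases the choice of $R$ gives $2\tfrac{K}{N}e^{-M/N}\di\,\skn(\di/2)^2/\skn(\di)\le \tfrac{K}{N}e^{-M/N}\di^2=-\tilde M\di^2$, which is precisely \eqref{eq:EVIlambdasublevel}.

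The single genuinely new point, and the step I expect to be the main obstacle, is the bound $f_N(z_s)\le e^{-M/N}$, equivalently $f(z_s)\le M$, used in the first case; in Proposition~\ref{prop:correspondence2} this was automatic since $f\le M$ everywhere, whereas here no global bound is available and the restriction $z\in\{f\le M\}$ only controls the competitor (which settles the second case via $f_N(z)\le e^{-M/N}$). The intended resolution is to control the energy from above by its initial value: since $s\mapsto f(z_s)=f(y_{\varphi(s)})$ is non-increasing by Proposition~\ref{prop:fdecreases} and $z_s\to y_0$ as $s\downarrow0$, one has $f(z_s)\le \lim_{s'\downarrow0}f(z_{s'})=\lim_{t\downarrow0}f(y_t)$, and the delicate ingredient is precisely the right-continuity of the energy at the initial time, $\lim_{t\downarrow0}f(y_t)=f(y_0)$ (the inequality $\ge$ being lower semicontinuity), which then yields $f(z_s)\le f(y_0)\le M$. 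Establishing this energy continuity — that $f$ does not exceed its initial value $f(y_0)$ along the curve, in the absence of coercivity or convexity — is where the real work lies; everything else is a routine transcription of Proposition~\ref{prop:correspondence2} with the globalization removed and the competitor confined to the sublevel ball.
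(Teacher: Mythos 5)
Your transcription of the first half of the proof of Proposition~\ref{prop:correspondence2} is accurate, and it is the argument the paper intends (the paper's own ``proof'' consists solely of the remark that the statement follows verbatim from that of Proposition~\ref{prop:correspondence2}). Your accounting of the constants is correct: the coefficient $\tilde M$ in \eqref{eq:EVIlambdasublevel} corresponds to an $\evi_{2\tilde M}$-type inequality, so the crude bound $\theta\,\skn(\theta/2)^2/\skn(\theta)\le\theta^2/2$ on $B_R$ replaces the $(\tfrac14+\varepsilon)\theta^2$ bound of \eqref{eq:stima} and removes the limiting procedure in $\varepsilon$; the two-case estimate starting from \eqref{eq:boh} then goes through once one knows $f_N(z)\le e^{-M/N}$ (which follows from $z\in\{f\le M\}$) in one case and $f_N(z_s)\le e^{-M/N}$ in the other. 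You are also right that this last bound is the only point at which ``verbatim'' breaks down, and that it reduces, via the monotonicity of Proposition~\ref{prop:fdecreases}, to the right-continuity $\lim_{t\downarrow 0}f(y_t)=f(y_0)$.

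However, you explicitly leave that right-continuity unproven, and as it stands this is a genuine gap: lower semicontinuity only gives $\lim_{t\downarrow 0}f(y_t)\ge f(y_0)$, and without the reverse inequality the claimed estimate could fail already for $M=f(y_0)$. The missing step is closed by the same Gr\"onwall argument used in the proof of Proposition~\ref{prop:fdecreases}, now run at the initial time: test \eqref{eq:EVIKN} with $z=y_0$ (admissible, since $f$ is densely defined and hence $\bar{\mathcal D}[f]=X$). If $L\eqdef\lim_{t\downarrow 0}f(y_t)>f(y_0)$, then by monotonicity $f(y_t)>\tfrac12\tparen{L+f(y_0)}$ on some interval $(0,t_1]$, whence $f_N(y_0)/f_N(y_t)\le q$ there for some $q<1$, and the right-hand side of \eqref{eq:EVIKN} is bounded above by $\tfrac N2(1-q)<0$. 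Writing $g(t)\eqdef\skn\tparen{\di(y_t,y_0)/2}^2\ge 0$, Gr\"onwall gives
\begin{equation}
g(t)\le e^{-K(t-\epsilon)}g(\epsilon)+\frac N2(1-q)\int_\epsilon^t e^{-K(t-r)}\de r,
\end{equation}
and letting $\epsilon\downarrow 0$ (so that $g(\epsilon)\to 0$ because $y_\epsilon\to y_0$) forces $g(t)<0$ for $t\in(0,t_1]$, a contradiction. With this lemma supplied, your argument is complete and coincides with the paper's intended one.
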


\begin{theorem}\label{thm:uniqueness}
    Let let $f\colon X \to \bar \R$ be a lower semicontinuous functional and consider two parameters $K\in \R$ and  $N<0$. Let $(y_t)_{t\in (0,T)}$ and $(\tilde y_t)_{t\in (0,\tilde T)}$ be two $\evikn$ gradient curves  for $f$, and denote by $(z_s)_{s\in (0,T')}=\rr_1\big((y_t)_{t\in (0,T)}\big)$ and $(\tilde z_s)_{s\in (0,\tilde T')}=\rr_1\big((\tilde y_t)_{t\in (0,\tilde T)}\big)$ their respective reparametrizations. The following statements hold:
    \begin{enumerate}[$(i)$]
        \item\label{i:t:uniqueness:1} if $K\geq 0$, then \begin{equation}\label{eq:contraction1}
            \di(z_s,\tilde z_s)\leq \di(z_r,\tilde z_r) \qquad \text{for every }0<r\leq s< \min \{T', \tilde T'\}.
        \end{equation}
        \item\label{i:t:uniqueness:2} If $K< 0$ and $f$ is $(K,N)$-convex and bounded from above by some $M > 0$,      then \begin{equation}\label{eq:contraction2}
            \di(z_s,\tilde z_s)\leq e^{- \tilde M (s-r)} \di(z_r,\tilde z_r) \qquad \text{for every }0<r\leq s< \min \{T', \tilde T'\},
        \end{equation}
        where $\tilde M \eqdef  -\frac{K}{N} e^{-M/N}$.
        \item\label{i:t:uniqueness:3} If $K< 0$ and it holds $\di(z_r,\tilde z_r)< R/3$ for some $r\in (0, \min \{T', \tilde T'\})$, where $R$ is the constant defined in Proposition \ref{prop:correspondence3},  then there exists $\varepsilon>0$ such that \begin{equation}\label{eq:contraction3}
            \di(z_s,\tilde z_s)\leq e^{- \tilde M (s-r)} \di(z_r,\tilde z_r) \qquad \text{for every }r\leq s< r+\varepsilon,
        \end{equation}
        where $\tilde M \eqdef  -\frac{K}{N} e^{-M/N}$ and $M:=\max\{ f(z_r),f(\tilde z_r)\}$.
    \end{enumerate}
    Moreover, for every $y_0\in \mathcal D[f]$ there exists at most one $\evikn$ gradient curve for $f$ starting from $y_0$, both for $K < 0$ and for $K \ge 0$. In the case $K \ge 0$ the initial point $y_0$ can be taken from the entire space $X$. This is also true for $K <0$, provided that the functional $f$ is $(K,N)$-convex and bounded from above.
\end{theorem}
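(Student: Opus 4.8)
\emph{Strategy and items \ref{i:t:uniqueness:1}, \ref{i:t:uniqueness:2}.}
The unifying idea is to push each estimate, via the reparametrization $\rr_1$, to a (possibly only local) $\evi_\lambda$ property of the reparametrized curves $(z_s),(\tilde z_s)$ for the functional $f_N$, and then to invoke—or, in the hardest case, to locally reprove—the $\lambda$-contractivity of $\evi_\lambda$ gradient curves from Proposition~\ref{prop:EVI}; note that both original curves lie in $\mathcal{C}'$ by Proposition~\ref{prop:fdecreases}, so the maps $\rr_1$ apply and all estimates are read on $(0,\min\{T',\tilde T'\})$. For \ref{i:t:uniqueness:1}, when $K\geq0$ I would first use Proposition~\ref{prop:monotonicity} to regard both curves as $\evi_{0,N}$ gradient curves, so that by Proposition~\ref{prop:correspondence} the curves $(z_s),(\tilde z_s)$ belong to $\EVI_0(f_N)\cap\mathcal{C}''_N$, i.e.\ they are $\evi_0$ gradient curves for the convex functional $f_N$ (cf.\ Proposition~\ref{prop:ftofN}\ref{i:p:ftofN:1}); Proposition~\ref{prop:EVI} with $\lambda=0$ then gives \eqref{eq:contraction1} at once. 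For \ref{i:t:uniqueness:2}, Proposition~\ref{prop:correspondence2} places $(z_s),(\tilde z_s)$ in $\EVI_{\tilde M}(f_N)\cap\mathcal{C}''_N$ with $\tilde M=-\tfrac{K}{N}e^{-M/N}$, and Proposition~\ref{prop:EVI} with $\lambda=\tilde M$ yields \eqref{eq:contraction2} directly. Here $\tilde M<0$, so \eqref{eq:contraction2} is in fact an expansion bound, which is nonetheless exactly the information needed for uniqueness.

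\emph{Item \ref{i:t:uniqueness:3}.}
When $K<0$ and $f$ is not bounded above, $f_N$ need not be globally $\lambda$-convex, so Proposition~\ref{prop:EVI} is unavailable and only the \emph{local} inequality of Proposition~\ref{prop:correspondence3} is at our disposal. I would apply that proposition to each curve on the time-interval $[r,\,\cdot\,)$, so that the relevant initial energies are $f(z_r)$ and $f(\tilde z_r)$, both $\leq M=\max\{f(z_r),f(\tilde z_r)\}$; this produces local $\evi_{\tilde M}(f_N)$ inequalities valid for test points in $\{f\leq M\}\cap B_R(\cdot)$. Two admissibility checks then let one test each curve's inequality against the other curve's point at the same time: the sublevel condition $z_s,\tilde z_s\in\{f\leq M\}$ holds for all $s\geq r$ since $f$ is non-increasing along both reparametrized curves (Proposition~\ref{prop:fdecreases}, as $\rr_1$ only rescales time); and the ball condition $\di(z_s,\tilde z_s)<R$ persists on some $[r,r+\varepsilon)$ by continuity together with the strict bound $\di(z_r,\tilde z_r)<R/3$. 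On that interval the two local inequalities feed into the computation underlying Proposition~\ref{prop:EVI} (i.e.\ \cite[Thm.~3.5]{MurSav}, which is local in nature, comparing two nearby curves) to produce $\frac{\de^+}{\de s}\tfrac12\di(z_s,\tilde z_s)^2\leq-\tilde M\,\di(z_s,\tilde z_s)^2$, whence Grönwall's lemma gives \eqref{eq:contraction3}.

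\emph{Uniqueness.}
Given two $\evikn$ curves issuing from the same $y_0$, their reparametrizations both start at $y_0$, so $\di(z_r,\tilde z_r)\to0$ as $r\downarrow0$. When $K\geq0$ (arbitrary $y_0\in X$), and when $K<0$ with $f$ $(K,N)$-convex and bounded above (again arbitrary $y_0\in X$), I would fix $s$ and let $r\downarrow0$ in \eqref{eq:contraction1}, resp.\ \eqref{eq:contraction2}: the prefactor stays bounded, so $z_s=\tilde z_s$ for every $s$, and since $\rr_2$ is a bijection (Proposition~\ref{prop:inverse}) the original curves coincide, with equal domains. For the remaining case ($K<0$, general, $y_0\in\mathcal D[f]$) I would run a continuation argument based on \ref{i:t:uniqueness:3}: for small $r$ one has $\di(z_r,\tilde z_r)<R/3$, and \eqref{eq:contraction3} together with $\di(z_r,\tilde z_r)\to0$ forces coincidence on an initial interval; from any later time where the curves agree one reapplies \ref{i:t:uniqueness:3} (now with $\di=0<R/3$) to extend the coincidence set, which is thus open, closed and nonempty in $(0,\min\{T',\tilde T'\})$, hence everything.

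\emph{Main obstacle.}
The crux is the general $K<0$ case. Beyond checking that the $\evi$-contraction computation of \cite{MurSav} survives when the inequalities hold only on $B_R(\cdot)\cap\{f\leq M\}$, the delicate point is to keep the expansion constant $\tilde M=-\tfrac{K}{N}e^{-M/N}$—equivalently $M=\max\{f(z_r),f(\tilde z_r)\}$—bounded uniformly as $r\downarrow0$, so that the interval length $\varepsilon$ in \ref{i:t:uniqueness:3} does not degenerate and the continuation can reach an initial interval. This reduces to the right-continuity $\lim_{s\downarrow0}f(z_s)=f(y_0)$, which I would extract by testing the local inequality of Proposition~\ref{prop:correspondence3} with $M=f(y_0)$ (admissible since $y_0\in\{f\leq f(y_0)\}\cap B_R(z_s)$ for small $s$) against the fixed point $y_0$, combined with the lower semicontinuity of $f$.
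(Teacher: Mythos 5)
Your proposal is correct and follows essentially the same route as the paper's proof: items \ref{i:t:uniqueness:1}--\ref{i:t:uniqueness:2} via Propositions \ref{prop:correspondence} and \ref{prop:correspondence2} combined with the contractivity of Proposition \ref{prop:EVI} (with the monotonicity step, Proposition \ref{prop:monotonicity}, which the paper uses implicitly), item \ref{i:t:uniqueness:3} via Proposition \ref{prop:correspondence3} and a localized run of the contraction argument of \cite[Thm.~3.5]{MurSav}, and uniqueness by letting $r\downarrow 0$ in the (expansion) bounds, resp.\ a continuation argument for general $K<0$. The one place you go beyond the paper is where it disposes of the general $K<0$ uniqueness in a single sentence: you correctly identify and resolve the genuine subtlety there, namely keeping $\tilde M(r)=-\tfrac{K}{N}e^{-M(r)/N}$ bounded as $r\downarrow 0$ via the right-continuity $\lim_{s\downarrow 0}f(z_s)=f(y_0)$, which you obtain by testing \eqref{eq:EVIlambdasublevel} at $z=y_0$ with $M=f(y_0)$ and integrating, together with lower semicontinuity — a detail the paper leaves implicit but which is needed for its argument to close.
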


\begin{proof}
    \ref{i:t:uniqueness:1}
    According to Proposition~\ref{prop:correspondence}, both $(z_s)_{s\in (0,T')}$ and $(\tilde z_s)_{s\in (0,\tilde T')}$ are $\evi_0$ gradient curves for $f_N$, starting from $y_0$. Then, Proposition \ref{prop:EVI} immediately gives \eqref{eq:contraction1}.
    
    \medskip
    
    \noindent \ref{i:t:uniqueness:2} Using Proposition \ref{prop:correspondence2} instead of Proposition \ref{prop:correspondence}, we deduce \eqref{eq:contraction2} analogously.
    
    \medskip 
    
    \noindent\ref{i:t:uniqueness:3} Take $\varepsilon>0$ such that for every $s\in(r,r+\varepsilon)$ we have $\di(z_r,z_s),\di(\tilde z_r,\tilde z_s) < R/3$, then $\di(z_s,\tilde z_t) < R$ for every $s, t \in (r, r+\varepsilon)$. Moreover, according to Proposition \ref{prop:monotonicity}, we know that $f(z_s), f(\tilde z_s)\leq M$ for every $s\in (r, r+\varepsilon)$. In particular, using Proposition \ref{prop:correspondence3} and following the classical proof of the contractivity for $\evi_\lambda$ gradient flows (see \cite[Theorem 3.5]{MurSav}), we deduce the contractivity estimate \eqref{eq:contraction3}.

    \medskip 

    \noindent When $K\geq 0$, the uniqueness of $\evikn$ gradient curves starting from a point $y_0 \in X$ is a straightforward consequence of point \ref{i:t:uniqueness:1}. Analogously, from \ref{i:t:uniqueness:2} we deduce it when $y_0\in X$ if $K<0$ and $f$ is $(K,N)$-convex and bounded above. Finally, in the case $K<0$, without any additional assumption on the potential $f$, we can use point \ref{i:t:uniqueness:3} to prove uniqueness of $\evikn$ gradient curves starting from a point $y_0 \in \mathcal D[f]$.
\end{proof}

\begin{prop}\label{prop:prop}
    Let $f\colon X \to \bar \R$ be a lower semicontinuous functional. Given any $K\in \R$ and $N<0$, let $(y_t)_{t\in (0,T)}$ be an $\evikn$ gradient curve for $f$. Then the curve $(y_t)_{t\in (0,T)}$ is locally Lipschitz on~$(0,T)$.
\end{prop}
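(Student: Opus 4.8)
The plan is to transport the problem, via the reparametrization maps of Section~\ref{sec:3}, to the regularity theory for $\evi_\lambda$ gradient curves, for which local Lipschitz continuity is available. Write $(z_s)_{s\in(0,T')}=\rr_1\big((y_t)_{t\in(0,T)}\big)$, so that $y_t=z_{\alpha(t)}$ with $\alpha$ the time change of \eqref{eq:AlphaReparametrization}. First I would observe that on every compact subinterval $[a,b]\subset(0,T)$ the function $f$ is bounded along the curve: by Proposition~\ref{prop:fdecreases} the map $t\mapsto f(y_t)$ is non-increasing, hence $f(y_b)\le f(y_t)\le f(y_a)$ with both endpoints finite since $y_t\in\mathcal D[f]$. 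Consequently $f_N(y_t)=e^{-f(y_t)/N}$ is bounded between two positive constants on $[a,b]$, so by \eqref{eq:phi} the derivative $\alpha'(t)=-N/f_N(y_t)$ is bounded and bounded away from $0$ there; thus $\alpha$ (and its inverse $\varphi$) is bi-Lipschitz on compact subintervals. It therefore suffices to prove that $(z_s)_{s\in(0,T')}$ is locally Lipschitz, since this property transfers back to $(y_t)$ through the bi-Lipschitz change of variable.

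When $K\ge 0$, Proposition~\ref{prop:correspondence} shows that $(z_s)$ is a genuine $\evi_0$ gradient curve for $f_N$, and the a priori regularization estimates for $\evi$ gradient curves (see~\cite{MurSav}) immediately give that $(z_s)$ is locally Lipschitz on $(0,T')$; combined with the first step this proves the claim. When $K<0$ the curve $(z_s)$ need not be a global $\evi_\lambda$ gradient curve, because $f_N$ need not be $\lambda$-convex, so here I would argue locally. Fix $[a,b]\subset(0,T')$ and set $M\eqdef f(z_a)$, so that by monotonicity $f(z_s)\le M$ for all $s\in[a,b]$. By Proposition~\ref{prop:correspondence3} the curve then satisfies the local evolution variational inequality \eqref{eq:EVIlambdasublevel} for all $z\in\{f\le M\}\cap B_R(z_s)$, with $\tilde M=-\tfrac{K}{N}e^{-M/N}$.

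The core of the argument is then the classical regularizing estimate for $\evi$ curves, which I would reproduce in localized form. On a sufficiently short subinterval $[a',b']\subset(a,b)$ the test points needed in the estimate, namely points $z_{s'}$ of the curve with $s'$ near the base time, lie in $B_R(z_s)$ by continuity and in $\{f\le M\}$ by monotonicity, so \eqref{eq:EVIlambdasublevel} may be tested along the curve itself. Testing at base time $s$ with $z=z_{s_0}$ for $s_0<s$, and using that $s\mapsto f_N(z_s)$ is non-increasing, one recovers the monotonicity (up to the factor $e^{\tilde M s}$) of the metric speed together with the energy-dissipation bound, yielding an estimate of the form $|\dot z|(s)\le C\,(s-s_0)^{-1/2}\big(f_N(z_{s_0})-f_N(z_s)\big)^{1/2}$ on $[a',b']$; hence the metric speed is bounded there and $(z_s)$ is Lipschitz on $[a',b']$. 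The main obstacle is precisely this step: in the unbounded case only the \emph{local-in-space} inequality \eqref{eq:EVIlambdasublevel} is available, so one cannot invoke the regularity theorem of~\cite{MurSav} as a black box and must instead verify that every evaluation in the standard a priori estimate uses only test points remaining in $\{f\le M\}\cap B_R(z_s)$ over the short time interval considered, which is guaranteed by the continuity of the curve and the monotonicity of $f$ along it.
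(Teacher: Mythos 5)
Your proposal is correct and follows essentially the same route as the paper: reparametrize via $\rr_1$, use Proposition~\ref{prop:correspondence} and the $\evi_0$ regularization theory of~\cite{MurSav} when $K\geq 0$, use Proposition~\ref{prop:correspondence3} together with a localized version of the regularizing estimate when $K<0$, and transfer the Lipschitz property back through the (locally bi-Lipschitz) time change. Your extra care in checking that the test points in the localized estimate remain in $\{f\le M\}\cap B_R(z_s)$ is exactly the point the paper leaves to the reader when it says to follow the proof of Theorem~3.5 in~\cite{MurSav} verbatim.
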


\begin{proof}
     When $K\geq 0$, this is a simple consequence of Proposition \ref{prop:correspondence}.
    In fact, it guarantees that $(z_s)_{s\in (0,T')}=\rr_1\big((y_t)_{t\in (0,T)}\big)$ is a $\evion$ gradient flow for $f_N$ and it is therefore a locally Lipschitz curve. Moreover, as $f_N$ is non-increasing along $(z_s)_{s\in (0,T')}$, from \eqref{eq:psi} we observe that the reparametrization $\rr_2$ on $(z_s)_{s\in (0,T')}$ is locally Lipschitz.
    Finally, we deduce that $(y_t)_{t\in (0,T)}=\rr_2\big((z_s)_{s\in (0,T')}\big)$ (cf. Proposition \ref{prop:inverse}) is locally Lipschitz.
    When instead $K<0$, Proposition \ref{prop:correspondence3} ensures that $(z_s)_{s\in (0,T')}$ satisfies \eqref{eq:EVIlambdasublevel}.
    Following verbatim the proof of Theorem 3.5 in \cite{MurSav} (see in particular ‘Regularizing effects'), it is easy to note that \eqref{eq:EVIlambdasublevel} is sufficient to deduce that $(z_s)_{s\in (0,T')}$ is locally Lipschitz.
    We can then conclude the thesis as before. 
\end{proof}

The next proposition shows that the existence of a $\evikn$ gradient flow forces the potential to be $(K,N)$-convex. The analogous result has been proved for $\evi_\lambda$ gradient flows in \cite[Thm. 3.2]{DanSav} and for $\evikn$ gradient flows for positive values of the dimensional parameter $N$ in \cite[Thm. 2.23]{ErbarKuwadaSturm}. Our proof will follow the strategy developed in \cite[Thm. 2.23]{ErbarKuwadaSturm}, adapted to the negative dimensional case. 

\begin{prop}\label{eviconv}
    Let $f\colon X \to \bar \R$ be a densely defined lower semicontinuous functional. If for every $y_0\in \{f>-\infty\}$ there exists an $\evikn$ gradient curve $(y_t)_{t\in (0,T)}$ for $f$ starting from $y_0$, then $f$ is $(K,N)$-convex.
\end{prop}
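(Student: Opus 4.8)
The plan is to adapt the strategy of \cite[Thm.~2.23]{ErbarKuwadaSturm} to the negative-dimensional setting, but to produce the defining inequality \eqref{eq:convinequality} \emph{directly} at each interior point of a fixed geodesic, rather than passing through the reparametrized functional $f_N$ (which, by Proposition~\ref{prop:correspondence} and Proposition~\ref{prop:monotonicity}, would only recover the $K=0$ information and lose the dimensional term). The first move is an a priori reduction exploiting the hypothesis: since no $\evikn$ gradient curve can start at a point of $X\setminus\mathcal D^\star[f]$ (as observed right before Proposition~\ref{prop:fdecreases}), the assumed existence of a gradient curve from \emph{every} $y_0\in\{f>-\infty\}$ forces $\{f=+\infty\}=\varnothing$. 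Hence $f\colon X\to[-\infty,+\infty)$, so $\mathcal D^\star[f]=X$, the function $f_N=e^{-f/N}$ is finite everywhere, and a gradient curve starts from every point of $\mathcal D[f]$. This is precisely what guarantees that no interior point of the geodesic along which we work can take the forbidden value $+\infty$.

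Next I would fix $x_0,x_1\in\mathcal D^\star[f]=X$ with $\theta\eqdef\di(x_0,x_1)<\pi\sqrt{N/K}$ when $K<0$, together with a geodesic $\gamma\colon[0,1]\to X$ joining them (Assumption~\ref{a:Main}), and claim that \eqref{eq:convinequality} holds along \emph{this} $\gamma$, verified pointwise for each $a\in(0,1)$. If $f(\gamma_a)=-\infty$ then $f_N(\gamma_a)=0$ and the inequality is trivial by the convention $0\cdot\infty=0$. Otherwise $f(\gamma_a)\in\R$, so $\gamma_a\in\mathcal D[f]$ and there is an $\evikn$ gradient curve $(y_t)$ starting at $\gamma_a$. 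Writing $d_i(t)\eqdef\di(y_t,x_i)$, I record that $d_0(0)=a\theta$, $d_1(0)=(1-a)\theta$, and that, $f$ being non-increasing along $(y_t)$ (Proposition~\ref{prop:fdecreases}) and lower semicontinuous, $f(y_t)\to f(\gamma_a)$ and hence $f_N(y_t)\to f_N(\gamma_a)\in(0,\infty)$ as $t\downarrow0$.

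The core computation combines two independent inputs in the limit $t\downarrow0$. The \emph{first variation of arc length}: by the triangle inequality $d_0(t)+d_1(t)\ge\theta=d_0(0)+d_1(0)$ for every $t$, whence $\limsup_{t\downarrow0}t^{-1}\big([d_0(t)+d_1(t)]-\theta\big)\ge0$. The \emph{evolution variational inequality}: testing Proposition~\ref{prop:equivalentEVIKN}\ref{i:p:equivalentEVIKN:2} with $z=x_i$ and integrating — using that $(y_t)$ is locally Lipschitz by Proposition~\ref{prop:prop} and that the right-hand side is continuous up to $t=0$, and that $d_i$ stays in the range where \eqref{eq:EVIKN} is applicable for $t$ small — yields, for $i=0,1$,
\[
\limsup_{t\downarrow0}\frac{d_i(t)-d_i(0)}{t}\le \frac{N}{\skn(d_i(0))}\Big[\ckn(d_i(0))-\frac{f_N(x_i)}{f_N(\gamma_a)}\Big].
\]
Summing the two estimates, invoking subadditivity of $\limsup$ against the first-variation bound, and dividing by $N<0$ gives
\[
\frac{1}{\skn(a\theta)}\Big[\ckn(a\theta)-\frac{f_N(x_0)}{f_N(\gamma_a)}\Big]+\frac{1}{\skn((1-a)\theta)}\Big[\ckn((1-a)\theta)-\frac{f_N(x_1)}{f_N(\gamma_a)}\Big]\le0.
\]
Finally, rearranging this and using the addition identity $\skn(a\theta)\ckn((1-a)\theta)+\ckn(a\theta)\skn((1-a)\theta)=\skn(\theta)$ — the same product-to-sum computation as in the proof of Proposition~\ref{prop:gluing} — converts the bound into $f_N(\gamma_a)\le\sigma_{K,N}^{(1-a)}(\theta)\,f_N(x_0)+\sigma_{K,N}^{(a)}(\theta)\,f_N(x_1)$, which is exactly \eqref{eq:convinequality}. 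Since $a\in(0,1)$ was arbitrary, this establishes $(K,N)$-convexity along $\gamma$, and no separate gluing is needed.

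The step I expect to be most delicate is the passage to the limit $t\downarrow0$ in the EVI: one must convert the differential inequality for the upper Dini derivative of $t\mapsto d_i(t)^2$, valid only on $(0,T)$, into an infinitesimal estimate at the starting point $\gamma_a$. This requires the local Lipschitz regularity of the flow near $0$, the continuity $f_N(y_t)\to f_N(\gamma_a)$, and careful bookkeeping of $\limsup/\liminf$ when the two test-point estimates are added to the first-variation inequality (so that the favourable cancellation of the velocity terms, forced by $\gamma_a$ lying on the geodesic, is preserved). The only other genuinely new points with respect to the smooth treatment of \cite{ohta} are the a priori reduction $\{f=+\infty\}=\varnothing$ and the interior points with $f(\gamma_a)=-\infty$; both are absorbed uniformly into the argument above, thanks to the convention $0\cdot\infty=0$ and to the fact that $x_0,x_1\in\overline{\mathcal D[f]}=X$ remain admissible test points in \eqref{eq:EVIKN}.
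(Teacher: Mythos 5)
Your argument is correct and follows the same overall strategy as the paper (both adapt \cite[Thm.~2.23]{ErbarKuwadaSturm}: run the gradient flow from an interior point $\gamma_a$ of a fixed geodesic, test the $\evikn$ inequality against the two endpoints, and combine with the triangle inequality), but the execution of the key estimate is genuinely different. The paper works at \emph{positive} times: it invokes the integrated form of the EVI from \cite[Prop.~3.6]{ohta}, forms the weighted sum with coefficients $\sigma_{K,N}^{(1-s)}(\di)$ and $\sigma_{K,N}^{(s)}(\di)$, and then needs the $\sin$/$\sinh$ \emph{inequalities} of \cite{ErbarKuwadaSturm} (whose direction depends on the sign of $K$) to compare $\ckn(\di(\gamma_s^t,x_i))$ with $\ckn$ at the unperturbed distances, before letting $t\to 0$; the case $K=0$ is then handled separately via monotonicity in $K$. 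You instead differentiate at $t=0^+$: dividing the form \ref{prop:equivalentEVIKN}\ref{i:p:equivalentEVIKN:2} by $d_i(t)>0$, integrating (using the local Lipschitz regularity from Proposition~\ref{prop:prop} and the lower bound $f_N(y_s)\geq f_N(y_{t_0})>0$ near $s=0$ coming from Proposition~\ref{prop:fdecreases}), and combining with the first-variation bound $\limsup_{t\downarrow 0}t^{-1}(d_0(t)+d_1(t)-\theta)\geq 0$, after which only the addition \emph{identity} $\skn(a\theta)\ckn((1-a)\theta)+\ckn(a\theta)\skn((1-a)\theta)=\skn(\theta)$ is needed and all signs of $K$ (including $K=0$) are treated uniformly. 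What your route buys is the elimination of the external integrated EVI, of the trigonometric inequalities, and of the separate $K=0$ case; what it costs is the more delicate limit interchange at $t=0$ (which the paper's integrated formulation sidesteps, needing only lower semicontinuity of $f_N$ at the very last step) and the reliance on Proposition~\ref{prop:prop}.

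One imprecision worth fixing: the claim that $f(y_t)\to f(\gamma_a)$, hence $f_N(y_t)\to f_N(\gamma_a)$, as $t\downarrow 0$ does not follow from monotonicity plus lower semicontinuity alone --- these only give $\liminf_{t\downarrow 0}f_N(y_t)\geq f_N(\gamma_a)$. Fortunately that is the only direction your estimate uses: since $N<0$ and $f_N(x_i)\geq 0$, the term $-N f_N(x_i)/\bigl(\skn(d_i(s))f_N(y_s)\bigr)$ is nonnegative and decreasing in $f_N(y_s)$, so $\limsup_{s\downarrow 0}$ of it is bounded above by its value at $f_N(\gamma_a)$. You should state the limit as a one-sided bound rather than a convergence. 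With that adjustment the proof is complete, including the boundary conventions ($f_N(\gamma_a)=0$ and $f_N(x_i)=0$ cases) and the a priori reduction $\{f=+\infty\}=\varnothing$, which the paper leaves implicit.
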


\begin{proof}
We start by dealing with the case $K\neq 0$. Take any two points $x_0,x_1\in \mathcal{D}^\star[f]$, with $\di\eqdef \di(x_0,x_1)<\pi\sqrt{N/K}$ when $K<0$, and consider any geodesic $\gamma\colon [0,1]\to X$ connecting $x_0$ and $x_1$. Given any $s\in(0,1)$, we want to prove the convexity inequality \eqref{eq:convinequality} for $\gamma$ at time $t=s$. Observe that if $f(\gamma_s)=-\infty$ the inequality is trivially verified, otherwise we consider the $\evikn$ gradient curve $(\gamma_s^t)_{t\in (0,T)}$ starting from $\gamma_s$. According to \cite[Prop. 3.6]{ohta}, we know that 
\begin{equation}\label{eq:integralversion}
    \frac{N(e^{Kt}-1)}{2K} \bigg( 1 - \frac{f_N(z)}{f_N(\gamma_s^t)}\bigg) \geq e^{Kt} \skn\bigg(\frac{\di(\gamma_s^t, z)}{2}\bigg)^2 - \skn\bigg(\frac{\di(\gamma_s, z)}{2}\bigg)^2
\end{equation}
for all $z\in {\mathcal{D}}[f]$ and $t\in (0,T)$ such that $\sup_{r\in(0,t)} \di(\gamma_s^r,z)<\pi\sqrt{N/K}$ when $K<0$. Then, taking advantage of the trigonometric identity
\begin{equation}
    \skn(\theta/2)^2= - \frac{N}{2K} \bigg(\ckn(\theta) -1\bigg),
\end{equation}
we sum \eqref{eq:integralversion} for $z=x_0$ multiplied by $\sigma_{K,N}^{(1-s)}(\di)$ and \eqref{eq:integralversion} for $z=x_1$ multiplied by $\sigma_{K,N}^{(s)}(\di)$, obtaining
\begin{equation}\label{eq:stima2}
\begin{split}
    \sigma_{K,N}^{(1-s)}(\di) f_N(x_0) &+ \sigma_{K,N}^{(s)}(\di) f_N(x_1) \\
    &\geq \frac{f_N(\gamma_s^t)}{(e^{Kt}-1)} \bigg[\sigma_{K,N}^{(1-s)}(\di)\big(e^{Kt} \ckn(\di(\gamma_s^t,x_0))- \ckn(s\di)\big)\\
    &\qquad \qquad\qquad\qquad+ \sigma_{K,N}^{(s)}(\di)\big(e^{Kt} \ckn(\di(\gamma_s^t,x_1))- \ckn( (1-s)\di)\big) \bigg],
\end{split}
\end{equation}
for every $t$ sufficiently small. Now, as a consequence of the trigonometric sum formulas, we observe that 
\begin{equation}
    \sigma_{K,N}^{(1-s)}(\di) \ckn(s\di)+ \sigma_{K,N}^{(s)}(\di) \ckn((1-s)\di) = 1.
\end{equation}
 Moreover, we take adventage of the following trigonometric inequalities stated at the end of the proof of \cite[Thm. 2.23]{ErbarKuwadaSturm}: for every $\alpha,\alpha'\geq0$ and $\varepsilon,\varepsilon' \in [-\pi,\pi]$ such that $\varepsilon+\varepsilon'\geq 0$, setting $\beta= \alpha+\varepsilon$ and $\beta'=\alpha'+\varepsilon'$, we have that
\begin{equation}
    \begin{split}
        \sin(\alpha) \cos(\beta') + \cos(\beta) \sin(\alpha') &\leq \sin(\alpha + \alpha' ),\\
    \sinh(\alpha) \cosh(\beta') + \cosh(\beta) \sinh(\alpha') &\geq \sinh(\alpha + \alpha').
    \end{split}
\end{equation}
Applying these with $\alpha = (1-s) \di$, $\alpha'= s \di$, $\varepsilon= \di(\gamma_s^t,x_1)-(1-s)\di$ and $\varepsilon'= \di(\gamma_s^t,x_0)-s \di$, in which case $\varepsilon+\varepsilon'\geq 0$ by the triangular inequality, we deduce that
\begin{equation}
    \sigma_{K,N}^{(1-s)}(\di) \ckn(\di(\gamma_s^t,x_0))+ \sigma_{K,N}^{(s)}(\di) \ckn(\di(\gamma_s^t,x_1))\, \begin{cases}
        \leq 1 &\text{if }K<0,\\
        \geq 1 &\text{if }K>0.
    \end{cases}
\end{equation}
As a consequence, we obtain that the right-hand side of \eqref{eq:stima2} is greater than or equal to $f_N(\gamma_s^t)$, for every $t$ sufficiently small. The convexity inequality \eqref{eq:convinequality} follows by taking the limit as $t\to 0$.

To deal with the case $K=0$, it is sufficient to observe that any $\evi_{0,N}$ gradient curve for $f$ is also an $\evikn$ gradient curve for $f$, for every $K<0$. Then, according to the first part of the proof, the functional $f$ is $(K,N)$-convex for every $K<0$, thus it is also $(0,N)$-convex.
\end{proof}

\section{$\evi$ and $\edi$}\label{sec5}
Everywhere in this section we fix a metric space $(X, \di)$, a \emph{densely defined} (i.e.~$\overline{\mathcal{D}[f]}=X$) \emph{lower semicontinuous} functional $f \colon X \to \bar \R$ and two constants, $K\in \R$ and $N<0$.

\begin{definition}[Descending slope]
    For every $y\in \mathcal D [f]$, we define the descending slope $|D^- f|(y)$ as 
    \begin{equation}
    |D^- f|(y) \eqdef  \limsup_{z\to y} \frac{\big(f(z)-f(y)\big)^-}{\di(z,y)} .
\end{equation}
\end{definition}

We recall that we say that a curve~$(y_t)_{t\in (0,T)}$ \emph{starts at~$y_0\in X$} if there exists~$\lim_{t\downarrow 0} y_t=y_0$.
\begin{definition}[$\edi$ and curves of maximal slope]\label{d:EDI}
We say that a curve~$(y_t)_{t\in (0,T)} \subset \mathcal D[f]$ starting at $y_0$ satisfies the \emph{Energy-Dissipation Inequality} \eqref{eq:edi}, if $y\in \mathrm{AC}_{\loc}^2\big((0,T), X\big)$ and 
    \begin{equation}\label{eq:edi}\tag{EDI}
        f(y_t) \leq \lim_{\epsilon\downarrow 0}f(y_\epsilon) - \int_0^t \braket{ \frac{1}{2} |\dot{y}_s|^2 + \frac{1}{2} |D^- f|^2 (y_s) } \de s \qquad \text{for every }t\in (0,T).
    \end{equation}
We further say that a curve~$(y_t)_{t\in (0,T)} \subset \mathcal D[f]$ is \emph{of maximal slope for~$f$} if
\begin{enumerate}[$(a)$]
\item the curve $y$ is in $\mathrm{AC}^1_{\loc}\big((0,T),X\big)$;
\item\label{i:d:EDI:2} the function $t\mapsto f(y_t)$ is in $\mathrm{AC}^1_{\loc}\big((0,T),\R\big)$;
\item the following \emph{pointwise} \emph{Energy Dissipation Inequality} holds
   \begin{equation}\label{eq:maxslope}
        -\frac{\de}{\de t} f(y_t) \geq  \frac{1}{2} |\dot{y}_t|^2 + \frac{1}{2} |D^- f|^2 (y_t) \qquad \text{for }\Leb^1\text{-a.e. } t\in (0,T).  
    \end{equation}
\end{enumerate}
\end{definition}
Using the definition of descending slope $|D^-f|$, it is readily seen that \eqref{eq:maxslope} implies 
\begin{equation}\label{eq:equality}
    \frac{\de}{\de t} f(y_t) = - |D^- f|^2 (y_t) = -|\dot{y}_t|^2  \qquad \text{for }\Leb^1\text{-a.e. } t\in (0,T).
\end{equation}
In turn, for every curve of maximal slope~$(y_t)_{t\in (0,T)}$ starting at~$y_0$,
\eqref{eq:equality} is equivalent to the so-called \emph{Energy Dissipation Equality} \eqref{eq:ede}
\begin{equation}\label{eq:ede}\tag{EDE}
        f(y_t) = \lim_{\epsilon\downarrow 0}f(y_\epsilon) - \int_0^t \braket{ \frac{1}{2} |\dot{y}_s|^2 + \frac{1}{2} |D^- f|^2 (y_s) } \de s  \qquad \text{for every }0< t<T.
\end{equation}

\begin{remark}[Cf.~{\cite[Rmk.~2.6]{LzDSMaaPed23}}]
Definition~\ref{d:EDI} is slightly more restrictive than other definitions of curve of maximal slope in the literature, cf., e.g.~\cite{DeGMaTo}, in that we additionally assume~\ref{i:d:EDI:2} above.
Among other things, this grants that:~$\abs{D^-f}$ is a strong upper gradient for~$f$ along~$(y_t)_t$, cf. e.g.~\cite[Rmk.~2.8]{AGSCalc}; and that~$t\mapsto f(y_t)$ is non-increasing, by integrating~\eqref{eq:ede}.
\end{remark}

\begin{prop}\label{p:EVI-MaxSlope}
    Let $(y_t)_{t\in (0,T)}$ be an $\evikn$ gradient curve for~$f$, for some $K\in \R$ and $N<0$.
    Then,~$(y_t)_{t\in (0,T)}$ is a curve of maximal slope for~$f$ and satisfies the Energy Dissipation Equality \eqref{eq:ede}.
    In particular it satisfies the Energy Dissipation Inequality \eqref{eq:edi}.
\end{prop}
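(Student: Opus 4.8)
The plan is to show that an $\evikn$ gradient curve dissipates energy at the maximal possible rate, by combining three ingredients already established: the local differential characterization of $\evikn$ curves (namely $[\dot y,z]_t \le f'(y_t;z)$), the local Lipschitz regularity from Proposition~\ref{prop:prop}, and the monotonicity of $t\mapsto f(y_t)$ from Proposition~\ref{prop:fdecreases}. The overall strategy follows the classical $\evi \Rightarrow$ maximal-slope implication (cf.~\cite{AmbGigSav08,MurSav}), but I would route the argument through the reparametrized curve $(z_s)_{s\in(0,T')}=\rr_1\tparen{(y_t)_{t\in(0,T)}}$ whenever possible, since the correspondence results of Section~\ref{sec:3} transfer the $\evikn$ structure into a genuine $\evi_\lambda$ (or locally $\evi_\lambda$) structure for $f_N$, where the maximal-slope theory is standard.

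\emph{First}, I would establish the two halves of the energy-dissipation inequality separately. For the lower bound on dissipation, I would use the local characterization $[\dot y, z]_t \le f'(y_t;z)$ together with the metric identity relating $[\dot y,z]_t$ to the metric derivative $|\dot y|(t)$ and the slope $|D^- f|(y_t)$: choosing geodesics $z$ pointing in the direction of steepest descent of $f$ from $y_t$, the bracket $[\dot y,z]_t$ controls $-|\dot y|(t)\,|D^- f|(y_t)$ from below, while $f'(y_t;z)$ is bounded above by $-|D^- f|(y_t)\,\di(y_t,z)$ for near-optimal directions. Normalizing and taking the supremum over directions yields $|\dot y|(t)\le |D^- f|(y_t)$ and, in the same stroke, the pointwise bound
\begin{equation}
-\frac{\de}{\de t} f(y_t) \;\ge\; \tfrac12 |\dot y|^2(t) + \tfrac12 |D^- f|^2(y_t)
\end{equation}
for a.e.~$t$. \emph{Second}, for the regularity prerequisites: Proposition~\ref{prop:prop} already gives $y\in \mathrm{AC}^1_{\loc}$, so item~$(a)$ of Definition~\ref{d:EDI} is immediate; I would deduce item~\ref{i:d:EDI:2}, the local absolute continuity of $t\mapsto f(y_t)$, from the corresponding statement for $s\mapsto f_N(z_s)$ along the $\evi$ curve for $f_N$ (standard in the $\evi_\lambda$ theory), pulling it back through the bi-Lipschitz reparametrization $\psi$ of~\eqref{eq:psi} and the chain rule for $f = -N\log f_N$; monotonicity from Proposition~\ref{prop:fdecreases} guarantees the composition stays away from the singularities of $\log$.

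\emph{Third}, I would upgrade the pointwise inequality~\eqref{eq:maxslope} to the equality~\eqref{eq:equality}, hence to~\eqref{eq:ede}. The reverse inequality $-\frac{\de}{\de t} f(y_t) \le \tfrac12|\dot y|^2 + \tfrac12|D^- f|^2$ is the generic chain-rule/Young bound valid for any absolutely continuous curve along which $|D^- f|$ is a strong upper gradient, so combining it with the maximal-slope lower bound forces equality a.e.; integrating over $(\epsilon,t)$ and letting $\epsilon\downarrow 0$ (using that $t\mapsto f(y_t)$ is monotone and bounded below on compact subintervals, so the limit $\lim_{\epsilon\downarrow 0} f(y_\epsilon)$ exists in $\bar\R$) yields~\eqref{eq:ede}, and a fortiori~\eqref{eq:edi}.

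The main obstacle I anticipate is the \emph{first} step: carefully extracting the two metric bounds $|\dot y|\le |D^-f|$ and the slope-squared dissipation rate from the single scalar inequality $[\dot y,z]_t \le f'(y_t;z)$. The difficulty is that $[\dot y,z]_t$ is a one-sided, supremum-type object defined via geodesics from $y_t$, so identifying it with $|\dot y|(t)$ times a directional cosine requires a genuine optimization over the direction $z$ and a lower-semicontinuity argument to pass from near-optimal descent directions to the actual slope $|D^-f|(y_t)$. A secondary technical point is ensuring all the chain-rule manipulations for $f=-N\log f_N$ are legitimate on the (possibly non-complete) space and at points where $f_N$ could in principle degenerate; here the monotonicity of $f$ along the curve and the finiteness domain constraint $(y_t)\subset\mathcal D[f]$ keep $f_N(y_t)$ in a compact positive range on each $[\epsilon,t]\subset(0,T)$, which I would invoke to justify the computations uniformly.
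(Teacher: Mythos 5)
The paper's own proof is a two-line reduction: it records that $(y_t)_{t\in(0,T)}$ is locally Lipschitz (Proposition~\ref{prop:prop}) and that $t\mapsto f(y_t)$ is non-increasing (Proposition~\ref{prop:fdecreases}), and then invokes Proposition~3.4 of~\cite{ohta}, which derives the energy dissipation identity directly from the $\evikn$ inequality for such curves. Your proposal attempts a from-scratch proof instead, and its first step has a genuine gap. The mechanism you describe --- choosing geodesics ``in the direction of steepest descent'' and bounding $f'(y_t;z)$ above by $-|D^- f|(y_t)\,\di(y_t,z)$ for ``near-optimal directions'' --- implicitly requires $f$ to be geodesically (semi-)convex: the descending slope is a $\limsup$ of difference quotients over \emph{points} $z\to y_t$, and without convexity the directional derivative $f'(y_t;z)=\liminf_{s\downarrow 0}\tfrac{f(z_s)-f(y_t)}{s}$ along the geodesic toward a near-optimal point $z$ is not controlled by the difference quotient $\tfrac{f(z)-f(y_t)}{\di(z,y_t)}$ (the function may be flat near $y_t$ and drop only near $z$, making $f'(y_t;z)=0$ while the quotient is close to $-|D^- f|(y_t)$). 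Proposition~\ref{p:EVI-MaxSlope} makes no convexity assumption on $f$, so this step fails as stated. Note also that combining $[\dot y,z]_t\leq f'(y_t;z)$ with $[\dot y,z]_t\geq -|\dot y|(t)\,\di(y_t,z_1)$ only yields a \emph{lower} bound on $f'(y_t;z)$, which is the wrong direction for extracting $|D^- f|(y_t)\leq |\dot y|(t)$.

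The assumption-free way to close the gap --- and, in substance, what the cited Proposition~3.4 of~\cite{ohta} does --- is to work with the integrated $\evikn$ inequality rather than with directional derivatives: testing against the moving point $z=y_s$ and integrating over $[s,t]$, then using the monotonicity of $f\circ y$ and Fubini, gives $|\dot y|^2(t)\leq -\tfrac{\de}{\de t}f(y_t)$ a.e.; testing against fixed points $z\to y_t$ gives $|D^- f|(y_t)\leq |\dot y|(t)$; and the elementary bound $-\tfrac{\de}{\de t}f(y_t)\leq |D^- f|(y_t)\,|\dot y|(t)$ (from the definition of the slope applied to $z=y_{t+h}$) together with Young's inequality forces all of these to be equalities, which is \eqref{eq:equality} and hence \eqref{eq:ede}. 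Your alternative suggestion of routing through $\rr_1$ and the $\evi_\lambda$ theory for $f_N$ is viable for $K\geq 0$ via Proposition~\ref{prop:correspondence} and \cite{MurSav}, but for $K<0$ and unbounded $f$ you only have the local inequality of Proposition~\ref{prop:correspondence3}, so that route also needs extra work your sketch does not supply. Your second and third steps (regularity via Proposition~\ref{prop:prop}, and upgrading \eqref{eq:maxslope} to \eqref{eq:equality}) are essentially sound.
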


\begin{proof}
     According to Proposition \ref{prop:fdecreases} and Proposition \ref{prop:prop}, $(y_t)_{t\in (0,T)}$ is locally Lipschitz and the function $t\mapsto f(y_t)$ is locally bounded above. The conclusion then follows from Proposition 3.4 in \cite{ohta}.
\end{proof}

\begin{prop}
    If $f\colon X \to \bar\R$ is a $(K,N)$-convex functional, then for every $y\in \mathcal{D}[f]$ and every $R>0$, with $R<\pi \sqrt{ N/K}$ if $K<0$, it holds that 
    \begin{equation}\label{eq:descendingslope}
        |D^- f|(y) = \sup_{z\in B_R(y)\setminus \{y\}} \paren{ \frac{N}{\skn(\di(z,y))} \braket{1- \frac{f_N(z)}{f_N(y)} }-K\, \frac{\skn(\di(z,y)/2)}{\ckn(\di(z,y)/2)} }^-
    \end{equation}
\end{prop}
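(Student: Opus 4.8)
The plan is to prove the two inequalities separately, writing $d\eqdef\di(z,y)$ and abbreviating the bracketed expression as
\[
g(z)\eqdef \frac{N}{\skn(d)}\braket{1-\frac{f_N(z)}{f_N(y)}}-K\,\frac{\skn(d/2)}{\ckn(d/2)},
\]
so that the claim reads $|D^-f|(y)=\sup_{z\in B_R(y)\setminus\{y\}}g(z)^-$. I note first that the restriction $R<\pi\sqrt{N/K}$ for $K<0$ guarantees that $\skn,\ckn$ are positive on the relevant range and that the $(K,N)$-convexity inequality \eqref{eq:convinequality} of Definition~\ref{def:KNconv} applies to every $z\in B_R(y)$; also $y\in\mathcal{D}[f]$ gives $f_N(y)\in(0,\infty)$.

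For the lower bound $|D^-f|(y)\ge\sup_z g(z)^-$, I would fix $z\in B_R(y)\setminus\{y\}$ and let $\gamma$ be a geodesic from $y$ to $z$ along which \eqref{eq:convinequality} holds. Combining Lemma~\ref{lem:chain} (the chain rule $f'(y;\gamma)=-\tfrac{N}{f_N(y)}f_N'(y;\gamma)$, where $-N/f_N(y)>0$) with Lemma~\ref{lem:KNconv} yields the one-sided bound
\[
f'(y;\gamma)\le\frac{Nd}{\skn(d)}\braket{\ckn(d)-\frac{f_N(z)}{f_N(y)}}.
\]
Since $\di(\gamma_t,y)=td$, the elementary estimate $|D^-f|(y)\ge\limsup_{t\downarrow0}\tfrac1d\big(\tfrac{f(\gamma_t)-f(y)}{t}\big)^-\ge\tfrac1d\big(f'(y;\gamma)\big)^-$ (using $\limsup_t a_t^-\ge(\liminf_t a_t)^-$, valid because $w\mapsto w^-$ is continuous on $\bar\R$) gives $|D^-f|(y)\ge\tfrac1d\big(\tfrac{Nd}{\skn(d)}[\ckn(d)-\tfrac{f_N(z)}{f_N(y)}]\big)^-$. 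The final step is purely trigonometric: since $\tfrac1d(\cdot)^-$ commutes with multiplication by $1/d$, the right-hand side equals $\big(\tfrac{N}{\skn(d)}[\ckn(d)-\tfrac{f_N(z)}{f_N(y)}]\big)^-$, and using the double-angle identity $\skn(d)=2\skn(d/2)\ckn(d/2)$ together with $\ckn(d)-1=-\tfrac{2K}{N}\skn(d/2)^2$ (the identity already used in the proof of Proposition~\ref{eviconv}, and trivially valid for $K=0$) one checks that $\tfrac{N}{\skn(d)}[\ckn(d)-\tfrac{f_N(z)}{f_N(y)}]=g(z)$. Thus $|D^-f|(y)\ge g(z)^-$, and taking the supremum over $z$ closes this half.

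For the upper bound $|D^-f|(y)\le\sup_z g(z)^-$, convexity plays no role; the argument is an asymptotic comparison of difference quotients. Since $w\mapsto w^-$ is $1$-Lipschitz, $\big|g(w)^- -\big(\tfrac{f(w)-f(y)}{\di(w,y)}\big)^-\big|\le\big|g(w)-\tfrac{f(w)-f(y)}{\di(w,y)}\big|$, and a Taylor expansion of $\skn,\ckn$ at $0$ together with $1-e^{-\delta/N}=\delta/N+O(\delta^2)$ shows this difference tends to $0$ as $w\to y$ along any sequence for which $\tfrac{f(w)-f(y)}{\di(w,y)}$ stays bounded; lower semicontinuity of $f$ guarantees $f(w)\to f(y)$ along the sequences that actually contribute to the descending slope (those with $f(w)<f(y)$). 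Hence $\limsup_{w\to y}\tfrac{(f(w)-f(y))^-}{\di(w,y)}=\limsup_{w\to y}g(w)^-\le\sup_{z\in B_R(y)\setminus\{y\}}g(z)^-$, the last inequality because $w\in B_R(y)$ eventually. The degenerate case in which the difference quotient is unbounded below is treated separately by checking directly that $g(w)^-\to+\infty$, so that both sides equal $+\infty$.

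The main obstacle I anticipate is the sign bookkeeping in the lower bound: several quantities change sign because $N<0$, and one must track carefully which inequalities are reversed when multiplying by $N/\skn(d)$ or by $-N/f_N(y)$, before performing the exact reduction of $\big(\tfrac{Nd}{\skn(d)}[\ckn(d)-\tfrac{f_N(z)}{f_N(y)}]\big)^-$ to $g(z)^-$ through the $\skn/\ckn$ identities. In the upper bound the only delicate point is making the Taylor estimate uniform enough to also cover the unbounded-slope case; away from that, this half is routine.
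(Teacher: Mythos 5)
Your proof is correct and follows essentially the same route as the paper's: the lower bound comes from the $(K,N)$-convexity inequality along a geodesic from $y$ to a fixed $z$ (you invoke Lemmas~\ref{lem:chain} and~\ref{lem:KNconv}, where the paper recomputes the limits of $\sigma_{K,N}^{(t)}(\di)/(t\di)$ inline, and both reduce to $g(z)$ via the same half-angle identities), and the upper bound comes from the asymptotic comparison of $g(w)$ with the difference quotient $\tparen{f(w)-f(y)}/\di(w,y)$ as $w\to y$. If anything, your treatment of the upper bound is slightly more careful than the paper's, since you explicitly isolate the sequences with bounded quotient (where lower semicontinuity gives $f(w)\to f(y)$) and the unbounded case where both sides are $+\infty$.
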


\begin{proof}
Fix any $y\in \mathcal{D}[f]$ and $R>0$. We start by observing that, for every~$z$ in a neighborhood of~$y$,
\begin{equation}
    f_N(z)-f_N(y) = e^{- f(z)/N} -e^{- f(y)/N} = -\frac{1}{N} f_N(y) \big( f(z)-f(y)\big) + o\big( f(z)-f(y)\big),
\end{equation}
and, as a consequence, we deduce that
    \begin{equation}\label{eq:dslope}
    |D^- f|(y) =  -\frac{N}{f_N(y)} \limsup_{z\to y} \bigg[\frac{(f_N(z)-f_N(y))^-}{\di(y,z)}\bigg].
\end{equation}
Moreover, recalling that
\begin{equation}
    \skn \big(\di(y,z)\big) = \di(y,z) + o(\di(y,z)) \qquad \text{and} \qquad \frac{\skn \big(\di(y,z)/2\big)}{\ckn \big(\di(y,z)/2\big)} = o(\di(y,z)),
\end{equation}
we conclude that 
\begin{equation}
    |D^- f|(y) = \limsup_{z\to y} \bigg( \frac{N}{\skn(\di(z,y))} \braket{1- \frac{f_N(z)}{f_N(y)} }-K\, \frac{\skn(\di(z,y)/2)}{\ckn(\di(z,y)/2)}\bigg)^-,
\end{equation}
proving that $|D^- f|(y)$ is smaller than or equal to the right-hand side of \eqref{eq:descendingslope}. In order to show the opposite inequality, we fix $z\in B_R(y)\setminus \{y\}$ and consider a geodesic $\gamma\colon [0,1]\to X$ connecting~$y$ and~$z$, such that
\begin{equation}\label{eq:convforz}
    f_N(\gamma_t) \leq \sigma_{K,N}^{(1-t)}(\di) f_N(y) + \sigma_{K,N}^{(t)}(\di) f_N(z) \qquad t\in [0,1],
\end{equation}
where $\di\eqdef \di(y,z)$. We observe that
\begin{equation}
    1-\sigma_{K,N}^{(1-t)}= \frac{\skn(\di)- \skn((1-t)\di)}{\skn(\di)} = \frac{\ckn(\di)t\di+ o(t)}{\skn(\di)},
\end{equation}
thus we have that
\begin{equation}\label{eq:convergenceofcoeff}
    \lim_{t\to 0}\frac{\sigma_{K,N}^{(t)}(\di)}{t \di} = \frac{1}{\skn(\di)} \qquad \text{and} \qquad \lim_{t\to 0}\frac{1-\sigma_{K,N}^{(1-t)}(\di)}{t \di} = \frac{\ckn(\di)}{\skn(\di)} .
\end{equation}
Then, combining \eqref{eq:convforz} and \eqref{eq:convergenceofcoeff}, we deduce that
\begin{align}
    -\frac{N}{f_N(y)} \frac{f_N(\gamma_t) - f_N(y) }{\di(y,\gamma_t) } \leq &-\frac{N}{f_N(y)} \frac{ \sigma_{K,N}^{(t)}(\di) f_N(z) - (1-\sigma_{K,N}^{(1-t)}(\di)) f_N(y) }{t\di}.
\end{align}
Letting~$t\to 0$ in the right-hand side above, we obtain
\begin{align*}
        \lim_{t\downarrow 0}-\frac{N}{f_N(y)} \frac{ \sigma_{K,N}^{(t)}(\di) f_N(z) - (1-\sigma_{K,N}^{(1-t)}(\di)) f_N(y) }{t\di}= &-\frac{N}{f_N(y)} \bigg[ \frac{1}{\skn(\di)} f_N(z) - \frac{\ckn(\di)}{\skn(\di)} f_N(y)\bigg]\\
        = & -\frac{N}{f_N(y)} \bigg[ \frac{f_N(z) - f_N(y)}{\skn(\di)} - \frac{1-\ckn(\di)}{\skn(\di)} f_N(y)\bigg] \\
        =& \frac{N}{\skn(\di)} \braket{1- \frac{f_N(z)}{f_N(y)}}-K\,  \frac{\skn(\di/2)}{\ckn(\di/2)} f_N(y),
\end{align*}
where the last step follows from the half-angle trigonometric formula for the tangent. Finally, keeping in mind \eqref{eq:dslope}, we conclude that 
\begin{align*}
        |D^- f|(y) &\geq \lim_{t\to0} \bigg[ -\frac{N}{f_N(y)} \frac{(f_N(\gamma_t) - f_N(y) )^-}{\di(y,\gamma_t) } \bigg] 
        \\
        &\geq \bigg(\frac{N}{\skn(\di)} \braket{1- \frac{f_N(z)}{f_N(y)} }-K\,  \frac{\skn(\di/2)}{\ckn(\di/2)} f_N(y)\bigg)^-.
\end{align*}
Taking the supremum of the previous inequality over $z\in B_R(y)\setminus \{y\}$, we conclude that $|D^- f|(y)$ is larger than or equal to the right-hand side above.
Together with the converse inequality shown above, this concludes the proof.
\end{proof}

In the following result, we prove that the descending slope $|D^- f|$ of a $(K, N)$-convex function~$f$ is a strong upper gradient for $f$, even when $N < 0$. This property was initially established for the class of $K$-convex functions in \cite[Cor. 2.4.10]{AmbGigSav08}, ensuring the same property also for $(K, N)$-functions for $N > 0$. In particular, this property plays a crucial role in proving the existence of curves satisfying the Energy Dissipation Inequality \eqref{eq:edi}, provided that $f$ is also coercive,
see Remark \ref{rmk:convmm} below.

\begin{corollary}\label{c:SlopeLSC}
    Let $(X, \di)$ be a metric space and $f\colon X \to \bar \R$ be a lower semicontinuous $(K,N)$-convex functional. Then, $|D^- f|$ is a strong upper gradient for $f$ and it is lower semicontinuous. 
\end{corollary}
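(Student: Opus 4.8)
The plan is to prove the two assertions separately: lower semicontinuity directly from the representation~\eqref{eq:descendingslope}, and the strong upper gradient property by transferring the corresponding (known) property of~$f_N$. For the first claim, fix~$z$ and set $B(y,z)\eqdef \frac{N}{\skn(\di(z,y))}\braket{1-\frac{f_N(z)}{f_N(y)}}-K\frac{\skn(\di(z,y)/2)}{\ckn(\di(z,y)/2)}$, so that~\eqref{eq:descendingslope} reads $|D^-f|=\sup_z B(\cdot,z)^-$. Since~$f$ is lower semicontinuous and~$N<0$, the function $f_N=e^{-f/N}$ is lower semicontinuous and positive; hence $1/f_N$ is upper semicontinuous and $y\mapsto 1-f_N(z)/f_N(y)$ is lower semicontinuous. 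As $y\mapsto N/\skn(\di(z,y))$ is continuous and strictly negative, the product is upper semicontinuous, and adding the continuous term keeps $B(\cdot,z)$ upper semicontinuous; therefore its negative part $B(\cdot,z)^-=(-B(\cdot,z))^+$ is lower semicontinuous. A supremum of lower semicontinuous functions is lower semicontinuous, and the~$y$-dependence of the ball in~\eqref{eq:descendingslope} is harmless: given $y_n\to y_0$ and an admissible~$z$ for~$y_0$, the point~$z$ is admissible for~$y_n$ for~$n$ large, so $\liminf_n |D^-f|(y_n)\ge \liminf_n B(y_n,z)^-\ge B(y_0,z)^-$, and taking the supremum over~$z$ yields $\liminf_n|D^-f|(y_n)\ge |D^-f|(y_0)$.

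For the strong upper gradient property I would exploit the chain-rule identity $|D^-f|=-\frac{N}{f_N}|D^-f_N|$ recorded in~\eqref{eq:dslope}, together with $f=-N\log f_N$ on~$\mathcal D[f]$. When~$K\ge 0$ the function~$f_N$ is lower semicontinuous and convex by Proposition~\ref{prop:ftofN}\ref{i:p:ftofN:1}, so that~$|D^-f_N|$ is a strong upper gradient for~$f_N$ by the convex-case result~\cite[Cor.~2.4.10]{AmbGigSav08}. For an absolutely continuous curve $\gamma\colon[a,b]\to X$ with image in~$\mathcal D[f]$ along which $\int_a^b |D^-f_N|(\gamma_t)|\dot\gamma_t|\de t<\infty$, the composition $f_N\circ\gamma$ is absolutely continuous; lower semicontinuity of~$f$ gives $f_N\circ\gamma\ge\varepsilon>0$ on the compact image, and since~$\log$ is Lipschitz on $[\varepsilon,\infty)$ the curve $f\circ\gamma=-N\log(f_N\circ\gamma)$ is absolutely continuous with $\frac{\de}{\de t}f(\gamma_t)=-\frac{N}{f_N(\gamma_t)}\frac{\de}{\de t}f_N(\gamma_t)$ for a.e.~$t$. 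Passing to absolute values and inserting the upper gradient bound for~$f_N$ gives $\modu{\frac{\de}{\de t}f(\gamma_t)}\le -\frac{N}{f_N(\gamma_t)}|D^-f_N|(\gamma_t)|\dot\gamma_t|=|D^-f|(\gamma_t)|\dot\gamma_t|$, and integration yields the required inequality. The lower semicontinuity just proven also guarantees the measurability of $t\mapsto |D^-f|(\gamma_t)|\dot\gamma_t|$.

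I expect the main obstacle to be the general case~$K<0$ with~$f$ unbounded above. There~$f_N$ is only \emph{locally} $\lambda$-convex, through Proposition~\ref{prop:ftofN}\ref{i:p:ftofN:2} combined with the gluing Proposition~\ref{prop:gluing}, and, more seriously, $f_N\circ\gamma$ may fail to be bounded above along~$\gamma$, so the integrability $\int_a^b|D^-f_N||\dot\gamma|<\infty$ need not follow from $\int_a^b|D^-f||\dot\gamma|<\infty$ and the clean transfer above can break. I would circumvent this by deriving the two-sided increment bound directly from~\eqref{eq:descendingslope}, mirroring the proof of~\cite[Cor.~2.4.10]{AmbGigSav08} in the present setting: the inequality $t^-\ge -t$ applied to~\eqref{eq:descendingslope} gives $|D^-f|(y)\ge -B(y,z)$ for every admissible~$z$, and rearranging controls $f_N(y)-f_N(z)$, hence (via the Lipschitz bound for~$\log$ on $[\varepsilon,\infty)$ with $\varepsilon=\min_{\mathrm{image}} f_N$) also $f(y)-f(z)$, in terms of $|D^-f|(y)\,\di(y,z)$ up to a quadratic error governed by the $\skn,\ckn$ coefficients. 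Applying this to consecutive points of a partition of~$\gamma$, choosing at each step the higher of the two values as~$y$ so as to control both the increase and the decrease of~$f$, and letting the mesh go to zero while using the lower semicontinuity of~$|D^-f|$ to pass to the limit, produces $|f(\gamma_b)-f(\gamma_a)|\le \int_a^b|D^-f|(\gamma_t)|\dot\gamma_t|\de t$ in full generality. The careful bookkeeping of the quadratic error terms and of the interplay between the two directions of monotonicity of~$f$ along the partition is where I expect most of the work to lie.
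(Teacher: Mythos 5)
Your lower-semicontinuity argument is correct and coincides in substance with the paper's: a supremum over $z$ of the lower semicontinuous functions $y\mapsto B(y,z)^-$ read off from \eqref{eq:descendingslope}, with the mild care about the moving ball $B_R(y)$ that you supply. For the strong upper gradient property, however, the paper takes a different and more economical route: for a curve $\gamma$ with compact image $\Gamma$ it introduces the global slope restricted to $\Gamma$, $\mathfrak{l}^\Gamma_f(y)=\sup_{z\in\Gamma\setminus\{y\}}\big(f(z)-f(y)\big)^-/\di(z,y)$, uses \eqref{eq:descendingslope} to bound $\mathfrak{l}^\Gamma_f\le |D^-f|+|K|\,C(R)$ on $\Gamma$, and then invokes \cite[Thm.~1.2.5]{AmbGigSav08}, which states that the global slope of \emph{any} function is a strong upper gradient. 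This requires no case distinction in $K$, no convexity of $f_N$, and no appeal to \cite[Cor.~2.4.10]{AmbGigSav08}.

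Your route has two genuine gaps. First, in the case $K\ge 0$ you verify the upper-gradient inequality only for curves with $\int_a^b|D^-f_N|(\gamma_t)|\dot\gamma_t|\,\de t<\infty$, whereas the strong upper gradient property of $|D^-f|$ must be checked under the hypothesis $\int_a^b|D^-f|(\gamma_t)|\dot\gamma_t|\,\de t<\infty$. By \eqref{eq:dslope} the two integrands differ by the factor $-f_N/N$, and passing from the second hypothesis to the first requires $f_N$, i.e.\ $f$, to be bounded \emph{above} on the compact image of $\gamma$ --- which lower semicontinuity does not give (it gives only the lower bound $f_N\ge\varepsilon$ that you use for the logarithm). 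This is exactly the obstruction you flag for $K<0$, but it is equally present for $K\ge0$. Second, in the case $K<0$ the partition argument is only sketched, and its central estimate does not close as stated: from $|D^-f|(y)\ge -B(y,z)$ you control $f_N(y)-f_N(z)$ only up to a factor $f_N(y)$, namely $f_N(y)-f_N(z)\le -\tfrac{1}{N}\,f_N(y)\,\skn(\di)\,\big(|D^-f|(y)+|K|C\big)$, and converting this into a bound on $f(y)-f(z)$ via the Lipschitz constant $1/\varepsilon$ of $\log$ reintroduces the uncontrolled ratio $f_N(y)/\varepsilon$; the ``bookkeeping'' you defer is precisely the point at which the argument is not yet closed. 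Both difficulties are bypassed in the paper's approach because the comparison with $\mathfrak{l}^\Gamma_f$ is an inequality between slopes of $f$ itself, and \cite[Thm.~1.2.5]{AmbGigSav08} already contains the two-sided partition argument you would otherwise have to reproduce.
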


\begin{proof}
Recalling that the descending slope is a weak upper gradient (cf. \cite[Def. 1.2.2]{AmbGigSav08}), the fact that $|D^- f|$ is a strong upper gradient for $f$ follows from the  absolute continuity of $f \circ \gamma$  for any curve $\gamma \in \mathrm{AC}^1_{\mathrm{loc}}((a, b), X)$ satisfying $|D^- f|(\gamma) |\gamma'| \in L^1(a, b)$, which is the property we are going to prove. Without loss of generality, we can assume that the interval $(a, b)$ is bounded and that the curve $\gamma$ can be extended by continuity to $[a, b]$.
We denote by $\Gamma \coloneqq \gamma ([a, b])$ the compact metric space whose distance is the one induced by $X$, we note that $\Gamma \subset B_R(\gamma(\bar t))$ for some $\bar t \in (a, b)$, and we assume  $R < \pi \sqrt{N/K \,}$ if $K < 0$. Hence we define the global slope  of $f$ related to the curve $\gamma$ by setting
\[
\mathfrak{l}^{\Gamma}_f (y) := \sup_{z \in \Gamma\setminus\{y\}}\frac{\big(f(z) - f(y)\big)^-}{\di(z, y)}.
\]
Making use of \eqref{eq:descendingslope}, we can see that
\[
\mathfrak{l}^\Gamma_f (y) \le |D^- f|(y) + |K| \max\bigg\{\frac{\skn(R/2)}{\ckn(R/2)}, \frac{\mathfrak{s}_{-K, N}(R/2)}{\mathfrak{c}_{-K, N}(R/2)}\bigg\}, \qquad \forall y \in \Gamma.
\]
In particular, we have that $\mathfrak{l}^\Gamma_f (\gamma) |\gamma'| \in L^1(a, b)$ and, since  $\mathfrak{l}^\Gamma_f (y)$ is a strong upper gradient by \cite[Thm. 1.2.5]{AmbGigSav08}, this ensures the absolute continuity of $f \circ \gamma$.

Finally, the representation of $|D^-f|$ in \eqref{eq:descendingslope} also guarantees its lower semicontinuity, following the argument in \cite[Thm 1.2.5]{AmbGigSav08}.
\end{proof}

Let us now show a partial converse to Proposition~\ref{p:EVI-MaxSlope}.

\begin{theorem}[Curves of maximal slope and $\EVI_{K,N}(f)$-gradient curves]\label{thm:maxslopeEVI}
Let~$f\colon X\to \bar \R$ be proper lower semi-continuous functional, and~$\seq{y_t}_{t\in (0,T)}$, for some~$T\in (0,\infty]$, be a curve of maximal slope for~$f$ and starting at~$y_0\in X$. 
Further assume that an~$\evikn$ gradient curve~$\seq{y'_t}_{t\in (0,T)}$ for~$f$ exists, for some $K\in \R$ and $N<0$, starting at~$y_0$, in a set $D\subset \mathcal D [f]$ which contains the image of $\seq{y_t}_{t\in (0,T)}$.

If either~$K\geq 0$ or~$f$ is $(K,N)$-convex and bounded above, then~$\seq{y_t}_{t\in (0,T)} = \seq{y'_t}_{t\in (0,T)}$.
\end{theorem}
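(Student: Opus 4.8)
The plan is to transport both curves to the $f_N$--picture by means of the reparametrization map $\rr_1$ of Section~\ref{sec:3}, where the $\evikn$ curve becomes an $\evi_\lambda$ gradient curve for $f_N$ and the curve of maximal slope becomes a curve of maximal slope for $f_N$; since from a common starting point these two objects must coincide, transporting back through $\rr_2=\rr_1^{-1}$ (Proposition~\ref{prop:inverse}) will yield $\seq{y_t}=\seq{y'_t}$. First I check that both curves are admissible inputs for $\rr_1$: the curve $(y'_t)$ is non-increasing for $f$ by Proposition~\ref{prop:fdecreases}, and $(y_t)$ is non-increasing along a curve of maximal slope (by integrating~\eqref{eq:ede}), so both lie in $\mathcal C'$. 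I then set $(z_s)\eqdef \rr_1\tparen{(y_t)}$ and $(z'_s)\eqdef \rr_1\tparen{(y'_t)}$, both starting at $z_0=y_0$.

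Second, I identify the reparametrized EVI curve. If $K\geq 0$, then by Proposition~\ref{prop:monotonicity} the curve $(y'_t)$ is also an $\evi_{0,N}$ gradient curve, so Proposition~\ref{prop:correspondence} gives $(z'_s)\in\EVI_0(f_N)$, i.e.\ $\lambda=0$. If instead $K<0$ and $f$ is $(K,N)$-convex and bounded above by $M$, then Proposition~\ref{prop:correspondence2} gives $(z'_s)\in\EVI_{\tilde M}(f_N)$ with $\tilde M=-\tfrac KN e^{-M/N}$, i.e.\ $\lambda=\tilde M$. In either case $(z'_s)$ is an $\evi_\lambda$ gradient curve for $f_N$ for an explicit $\lambda\in\R$.

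Third, I claim $(z_s)$ is a curve of maximal slope for $f_N$. Since $\rr_1$ acts through the locally bi-Lipschitz change of variable $\varphi$ with $\varphi'=-f_N(y_\varphi)/N>0$ (see~\eqref{eq:phi}), the curve $(z_s)$ inherits local absolute continuity with $\abs{\dot z_s}=\varphi'(s)\,\abs{\dot y_{\varphi(s)}}$, and $f_N\circ z=e^{-(f\circ y\circ\varphi)/N}$ is locally absolutely continuous because $f\circ y$ is (Definition~\ref{d:EDI}\ref{i:d:EDI:2}) and $\varphi$ is locally Lipschitz. The chain rule underlying Lemma~\ref{lem:chain} gives $\abs{D^- f_N}=-\tfrac1N f_N\,\abs{D^- f}$, and a term-by-term substitution (all three terms of~\eqref{eq:equality} acquire the common factor $(f_N/N)^2$) shows that the pointwise energy-dissipation equality~\eqref{eq:equality} for $f$ along $(y_t)$ is carried by $\varphi$ into the same identity for $f_N$ along $(z_s)$. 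Hence $(z_s)$ satisfies~\eqref{eq:ede} for $f_N$ and is a curve of maximal slope; the integrability defining $\mathcal C''_N$ holds by construction of $\rr_1$.

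Finally, $(z_s)$ and $(z'_s)$ are, respectively, a curve of maximal slope and an $\evi_\lambda$ gradient curve for the lower semicontinuous functional $f_N$, both emanating from $z_0$. Invoking the coincidence between curves of maximal slope and $\evi_\lambda$ gradient flows starting at the same point (see~\cite{MurSav}), we get $(z_s)=(z'_s)$, and therefore $\seq{y_t}=\rr_2\tparen{(z_s)}=\rr_2\tparen{(z'_s)}=\seq{y'_t}$ by Proposition~\ref{prop:inverse}. I expect this last coincidence step to be the main obstacle: the EVI side is clean via the reparametrization, but matching it to an \emph{arbitrary} curve of maximal slope is delicate, especially in the case $K\geq 0$, where $f$ itself need not be $(K,N)$-convex, so that the argument must rely on the rigidity of the $\evi_\lambda$ formulation rather than on convexity of $f$. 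In the complementary case ($K<0$, $f$ $(K,N)$-convex and bounded above) the functional $f_N$ is genuinely $\lambda$-convex by Proposition~\ref{prop:ftofN}\ref{i:p:ftofN:2}, so there one may alternatively upgrade the maximal-slope curve $(z_s)$ to an $\evi_\lambda$ flow and conclude directly from the uniqueness in Proposition~\ref{prop:EVI}.
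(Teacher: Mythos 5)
Your proposal is correct and follows essentially the same route as the paper: reparametrize both curves via $\rr_1$, identify $\rr_1\tparen{(y'_t)}$ as an $\evi_\lambda$ gradient curve for $f_N$ (with $\lambda=0$ or $\lambda=\tilde M$ via Propositions~\ref{prop:correspondence} and~\ref{prop:correspondence2}), verify through the chain-rule computations for $|\dot z|$, $|D^-f_N|$ and $\frac{\de}{\de t}f_N(z_t)$ that $\rr_1\tparen{(y_t)}$ is a curve of maximal slope for $f_N$, and conclude by the $\evi_\lambda$/EDI identification theorem of \cite[Thm.~4.2]{MurSav} together with the injectivity of $\rr_1$ on $\mathcal C'$ from Proposition~\ref{prop:inverse}. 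The final coincidence step you flag as the main obstacle is handled in the paper exactly as you anticipate, by the rigidity of the $\evi_\lambda$ formulation rather than by convexity of $f$.
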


\begin{proof}
Since~$\seq{y_t}_{t\in (0,T)}$ is of maximal slope, by~\eqref{eq:maxslope} we have~$\frac{\de}{\de t} f(y_t)\leq 0$ $\Leb^1$-a.e., thus~$t\mapsto f(y_t)$ is non-increasing in light of Definition~\ref{d:EDI}\ref{i:d:EDI:2}.
We deduce that $\seq{y_t}_{t\in (0,T)} \in \mathcal{C}'$ by definition~\eqref{eq:CPrimo} of~$\mathcal{C}'$.

Consider the reparametrizations $\mathcal R_1\tparen{\seq{y_t}_{t\in (0,T)}}$ and $\mathcal R_1\tparen{\seq{y'_t}_{t\in (0,T)}}$.
Since~$\mathcal R_1$ is injective on~$\mathcal C'$ by Proposition~\ref{prop:inverse}, it suffices to show that $\mathcal R_1\tparen{\seq{y_t}_{t\in (0,T)}}=\mathcal R_1\tparen{\seq{y'_t}_{t\in (0,T)}}$.

If~$K\geq 0$, then~$\mathcal R_1\tparen{\seq{y'_t}_{t\in (0,T)}}\in \EVI_0(f_N)\cap \mathcal C''_N$ by Proposition~\ref{prop:correspondence}.
In particular,~$\mathcal R_1\tparen{\seq{y'_t}_{t\in (0,T)}}$ is an $\evi_0$-gradient flow for~$f_N$.
Analogously, if~$f$ is $(K,N)$-convex and bounded above by~$M$, then $\mathcal R_1\tparen{\seq{y'_t}_{t\in (0,T)}}\in \EVI_{\tilde M}(f_N)\cap \mathcal C''_N$ by Proposition~\ref{prop:correspondence2} for~$\tilde M\eqdef -\tfrac{K}{N}e^{-M/N}$.
In particular,~$\mathcal R_1\tparen{\seq{y'_t}_{t\in (0,T)}}$ is an $\evi_{\tilde M}$-gradient flow for~$f_N$.
Thus, in both cases, it suffices to show that~$\seq{y'_t}_{t\in (0,T)}$ satisfies~\eqref{eq:edi} and apply the identification result for~$\evi_\lambda$ gradient curves in~\cite[Thm.~4.2]{MurSav}.

To this end, set~$\seq{z_{t}}_{t\in (0,S)}\eqdef \mathcal R_1\tparen{\seq{y_t}_{t\in (0,T)}}$.
On the one hand, for~$\varphi=\alpha^{-1}$ with~$\alpha$ as in~\eqref{eq:AlphaReparametrization},
\begin{align}\label{eq:p:EVI-EDI:1}
\abs{\dot z_t}=\abs{\dot y_{\varphi(t)}} \varphi'(t) \qquad \text{and} \qquad \abs{D^- f_N}(z_t) = -\frac{f_N(z_t)}{N} \abs{D^- f}(z_t) = \varphi'(t) \abs{D^- f}(z_t)
\end{align}
by~\eqref{eq:dslope} and~\eqref{eq:phi}.
On the other hand,
\begin{align}\label{eq:p:EVI-EDI:2}
-\frac{\de}{\de t} f_N(z_t) &= -\frac{f_N(z_t)}{N} \braket{-\frac{\de}{\de t} f(z_t)}=\varphi'(t) \braket{-\frac{\de}{\de t} f(z_t)}
\end{align}
and, by~\eqref{eq:maxslope},
\begin{align}\label{eq:p:EVI-EDI:3}
-\frac{\de}{\de t} f(z_t) = -\frac{\de}{\de t} f(y_{\varphi(t)}) = \varphi'(t)\frac{\de}{\de s} f(y_s) \big\rvert_{s=\varphi(t)} \geq \varphi'(t) \braket{\frac{1}{2}\abs{\dot y_{\varphi(t)}}^2+\frac{1}{2}\abs{D^- f}^2(y_{\varphi(t)})} .
\end{align}

Combining~\eqref{eq:p:EVI-EDI:1}-\eqref{eq:p:EVI-EDI:3},
\begin{align}\label{eq:p:EVI-EDI:4}
-\frac{\de}{\de t} f_N(z_t) \geq \varphi'(t)^2 \braket{\frac{1}{2}\abs{\dot y_{\varphi(t)}}^2+\frac{1}{2}\abs{D^- f}^2(y_{\varphi(t)})}= \frac{1}{2}\abs{\dot z_t}^2 + \frac{1}{2}\abs{D^-f_N}^2(z_t).
\end{align}
This shows that~$f_N$ satisfies the pointwise Energy Dissipation Inequality~\eqref{eq:maxslope} along~$\seq{z_{t}}_{t\in (0,S)}$.
Furthermore, integrating~\eqref{eq:p:EVI-EDI:4} from an arbitrary~$\epsilon>0$ to~$T$, we see that~$\seq{z_{t}}_{t\in (0,S)}\in \mathrm{AC}^2_\mathrm{loc}((0,T);X)$.
Finally, since $f\tparen{\seq{y_{t}}_{t\in (0,T)}}\in \mathrm{AC}_\mathrm{loc}((0,T);X)$, we claim that~$f_N\tparen{\seq{z_{t}}_{t\in (0,S)}}\in \mathrm{AC}_\mathrm{loc}((0,S);X)$.
Indeed, since~$\seq{y_{t}}_{t\in (0,T)} \in \mathcal{C}'$, (see the beginning of the proof) $f\tparen{\seq{y_{t}}_{t\in (0,T)}}$ is locally bounded in~$(0,T)$ and therefore, for every compact interval~$I \Subset (0,T)$, there exists a constant~$C(I)$ such that 
\begin{equation}
    |f_N(y_t)-f_N(y_s)| \leq C(I)|f(y_t)-f(y_s)|.
\end{equation}
As a consequence, we have that $f_N\tparen{\seq{y_{t}}_{t\in (0,T)}}\in \mathrm{AC}_\mathrm{loc}((0,T);X)$.
Finally, the reparametrization map~$\varphi$ defining~$\seq{z_{t}}_{t\in (0,S)}$ is locally Lipschitz on $(0,S)$ (cf.~\eqref{eq:phi}), thus we can conclude that $f_N\tparen{\seq{z_{t}}_{t\in (0,S)}}\in \mathrm{AC}_\mathrm{loc}((0,S);X)$.

Combining the properties above shows that~$\seq{z_t}_{t\in (0,S)}$ is a curve of maximal slope for~$f_N$ and thus concludes the proof.
\end{proof}

\begin{remark}[Convergence of minimizing-movement schemes]\label{rmk:convmm}
The lower semicontinuity of~$|D^- f|$ established in Corollary~\ref{c:SlopeLSC} allows us to apply the theory of generalized minimizing movement schemes to conclude the \emph{existence} of curves of maximal slope for lower semicontinuous $(K,N)$-convex functionals, $N<0$.

To this end, it suffices to verify the assumptions in~\cite[Thm.s~2.3.1 and 2.3.3, p.~46]{AmbGigSav08}.
Indeed, in the metric setting of~\cite[Rmk.~2.1.1, p.~43]{AmbGigSav08} let~$f\colon X\to \bar\R$ be a proper lower semicontinuous (cf.~\cite[Eqn.~(2.1.2a), p.~43]{AmbGigSav08}),  $(K,N)$-convex, and coercive in the sense of~\cite[Eqn.~(2.1.2b)]{AmbGigSav08}.
We recall that a $(K,N)$-convex functions is not necessarily coercive for~$N<0$ (see Rmk.~\ref{r:NoCoercive}). However, there are examples of $(K,N)$-convex functions bounded below, cf.\ e.g.\ Example~\ref{ex:KNconvex}\ref{i:ex:KNconvex:1}, hence, in particular, coercive.
Since~$|D^- f|$ is lower semicontinuous, it coincides with its lower semicontinuous envelope and so it is a weak upper gradient of~$f$ in the sense of~\cite[Dfn.~1.2.2, p.~27]{AmbGigSav08} by~\cite[Thm.~1.2.5, p.~28]{AmbGigSav08}, and in fact a strong upper gradient, again by Corollary~\ref{c:SlopeLSC}.

Now, it follows from~\cite[Thm.~2.3.3, p.~46]{AmbGigSav08} that the minimizing movement scheme for~$f$ converges to a curve of maximal slope in the sense of~\cite[Dfn.~1.3.2, p.~32]{AmbGigSav08} and therefore to a curve of maximal slope in the sense of Definition~\ref{d:EDI} by~\cite[Rmk.~1.3.3, p.~32]{AmbGigSav08} and the fact that~$|D^- f|$ is a strong upper gradient for~$f$.
\end{remark}

\subsection*{Conflict Of Interest and Data Availability}
The authors certify that they have NO affiliations with or involvement in any
organization or entity with any financial interest or non-financial interest in
the subject matter or materials discussed in this manuscript. Moreover, this manuscript has no associated data.

\bibliographystyle{alpha} 
\bibliography{bibliographyGF}

\begin{thebibliography}{DGMT80}

\bibitem[AGS08]{AmbGigSav08}
Luigi Ambrosio, Nicola Gigli, and Giuseppe Savar\'e.
\newblock {\em Gradient flows in metric spaces and in the space of probability
  measures}.
\newblock Lectures in Mathematics ETH Z\"urich. Birkh\"auser Verlag, Basel,
  second edition, 2008.

\bibitem[AGS14a]{AGSCalc}
Luigi Ambrosio, Nicola Gigli, and Giuseppe Savar\'e.
\newblock Calculus and heat flow in metric measure spaces and applications to
  spaces with {R}icci bounds from below.
\newblock {\em Invent. Math.}, 195(2):289--391, 2014.

\bibitem[AGS14b]{AGS14}
Luigi Ambrosio, Nicola Gigli, and Giuseppe Savar\'e.
\newblock Metric measure spaces with {R}iemannian {R}icci curvature bounded
  from below.
\newblock {\em Duke Math. J.}, 163(7):1405--1490, 2014.

\bibitem[Ama16]{Ama16}
Shun-ichi Amari.
\newblock {\em {Information Geometry and Its Applications}}.
\newblock Springer Japan, 2016.

\bibitem[AMS19]{AMS19}
Luigi Ambrosio, Andrea Mondino, and Giuseppe Savar\'e.
\newblock Nonlinear diffusion equations and curvature conditions in metric
  measure spaces.
\newblock {\em Mem. Amer. Math. Soc.}, 262(1270):v+121, 2019.

\bibitem[BBI01]{BurBurIva07}
Dmitri Burago, {\relax Yu}ri Burago, and Sergei Ivanov.
\newblock {\em {A course in metric geometry}}, volume~33 of {\em {Graduate
  Studies in Mathematics}}.
\newblock American Mathematical Society, Providence, RI, 2001.

\bibitem[BCR05]{BCR05}
F.~Barthe, P.~Cattiaux, and C.~Roberto.
\newblock Concentration for independent random variables with heavy tails.
\newblock {\em AMRX Applied Mathematics Research eXpress}, (2):39--60, 2005.

\bibitem[BL76]{BraLieb}
H.J. Brascamp and E.H Lieb.
\newblock On extensions of the {B}runn-{M}inkowski and {P}r\'ekopa-{L}eindler
  theorems, including inequalities for log concave functions, and with an
  application to the diffusion equation.
\newblock {\em J. Funct. Anal.}, (22):366--389, 1976.

\bibitem[BL09]{BobLed}
S.~G. Bobkov and Michel Ledoux.
\newblock Weighted {P}oincar\'e-type inequalities for {C}auchy and other convex
  measures.
\newblock {\em Ann. Probab.}, (37):403--427, 2009.

\bibitem[Bob07]{Bobkov07}
S.~G. Bobkov.
\newblock Large deviations and isoperimetry over convex probability measures
  with heavy tails.
\newblock {\em Electron. J. Prob.}, (12):1072--1100, 2007.

\bibitem[Bor75]{Borell}
C.~Borell.
\newblock Convex set functions in d-space.
\newblock {\em Period.Math.Hungar}, (6):111--136, 1975.

\bibitem[Bre73]{BrezisBook}
Haim Brezis.
\newblock {\em Op\'erateurs maximaux monotones et semi-groupes de contractions
  dans les espaces de {H}ilbert}, volume No. 5 of {\em North-Holland
  Mathematics Studies}.
\newblock North-Holland Publishing Co., Amsterdam-London; American Elsevier
  Publishing Co., Inc., New York, 1973.
\newblock Notas de Matem\'atica, No. 50. [Mathematical Notes].

\bibitem[DG93]{DeG}
Ennio De~Giorgi.
\newblock New problems on minimizing movements.
\newblock In {\em Boundary value problems for partial differential equations
  and applications}, volume~29 of {\em RMA Res. Notes Appl. Math.}, pages
  81--98. Masson, Paris, 1993.

\bibitem[DGMT80]{DeGMaTo}
Ennio De~Giorgi, Antonio Marino, and Mario Tosques.
\newblock Problems of evolution in metric spaces and maximal decreasing curve.
\newblock {\em Atti Accad. Naz. Lincei Rend. Cl. Sci. Fis. Mat. Nat. (8)},
  68(3):180--187, 1980.

\bibitem[DMT85]{DeMaTo}
Marco Degiovanni, Antonio Marino, and Mario Tosques.
\newblock Evolution equations with lack of convexity.
\newblock {\em Nonlinear Anal.}, 9(12):1401--1443, 1985.

\bibitem[DS08]{DanSav}
Sara Daneri and Giuseppe Savar\'{e}.
\newblock Eulerian calculus for the displacement convexity in the {W}asserstein
  distance.
\newblock {\em SIAM J. Math. Anal.}, 40(3):1104--1122, 2008.

\bibitem[DS14]{DaSa}
Sara Daneri and Giuseppe Savar\'e.
\newblock Lecture notes on gradient flows and optimal transport.
\newblock In {\em Optimal transportation}, volume 413 of {\em London Math. Soc.
  Lecture Note Ser.}, pages 100--144. Cambridge Univ. Press, Cambridge, 2014.

\bibitem[DSMP24]{LzDSMaaPed23}
Lorenzo Dello~Schiavo, Jan Maas, and Francesco Pedrotti.
\newblock {Local Conditions for Global Convergence of Gradient Flows and
  Proximal Point Sequences in Metric Spaces}.
\newblock {\em {Trans.\ Amer.\ Math.\ Soc.}}, 377(6):3779--3804, 2024.

\bibitem[EKS15]{ErbarKuwadaSturm}
Matthias Erbar, Kazumasa Kuwada, and Karl-Theodor Sturm.
\newblock On the equivalence of the entropic curvature-dimension condition and
  {B}ochner's inequality on metric measure spaces.
\newblock {\em Invent. Math.}, 201(3):993--1071, 2015.

\bibitem[GKO13]{GKO13}
Nicola Gigli, Kazumasa Kuwada, and Shin-Ichi Ohta.
\newblock Heat flow on {A}lexandrov spaces.
\newblock {\em Comm. Pure Appl. Math.}, 66(3):307--331, 2013.

\bibitem[Kol14]{Kol13}
Alexander~V. Kolesnikov.
\newblock Hessian metrics, {CD(K,N)}-spaces, and optimal transportation of
  log-concave measures.
\newblock {\em American Institute of Mathematical Sciences}, 34(4):1511--1532,
  2014.

\bibitem[McC94]{McCann}
Robert~John McCann.
\newblock {\em A convexity theory for interacting gases and equilibrium
  crystals}.
\newblock ProQuest LLC, Ann Arbor, MI, 1994.
\newblock Thesis (Ph.D.)--Princeton University.

\bibitem[Mil17]{Milman-NegSph}
Emanuel Milman.
\newblock Harmonic measures on the sphere via curvature-dimension.
\newblock {\em Ann. Fac. des Sc. de Toulouse}, 26(2):437--449, 2017.

\bibitem[MR23]{MR23}
Mattia Magnabosco and Chiara Rigoni.
\newblock Optimal maps and local-to-global property in negative dimensional
  spaces with {R}icci curvature bounded from below.
\newblock {\em Tohoku Math. J. (2)}, 75(4):483--507, 2023.

\bibitem[MRS13]{MiRoSa}
Alexander Mielke, Riccarda Rossi, and Giuseppe Savar\'e.
\newblock Nonsmooth analysis of doubly nonlinear evolution equations.
\newblock {\em Calc. Var. Partial Differential Equations}, 46(1-2):253--310,
  2013.

\bibitem[MRS23]{MRS23}
Mattia Magnabosco, Chiara Rigoni, and Gerardo Sosa.
\newblock Convergence of metric measure spaces satisfying the cd condition for
  negative values of the dimension parameter.
\newblock {\em Nonlinear Analysis}, 237:113366, 2023.

\bibitem[MS20]{MurSav}
Matteo Muratori and Giuseppe Savar\'{e}.
\newblock Gradient flows and evolution variational inequalities in metric
  spaces. {I}: {S}tructural properties.
\newblock {\em J. Funct. Anal.}, 278(4):108347, 67, 2020.

\bibitem[Oht16]{ohta}
Shin-ichi Ohta.
\newblock {$(K,N)$}-convexity and the curvature-dimension condition for
  negative~{$N$}.
\newblock {\em J. Geom. Anal.}, 26(3):2067--2096, 2016.

\bibitem[OT11]{OTI}
Shin-ichi Ohta and Asuka Takatsu.
\newblock Displacement convexity of generalized relative entropies.
\newblock {\em Adv. Math.}, 228(3):1742--1787, 2011.

\bibitem[OT13]{OTII}
Shin-Ichi Ohta and Asuka Takatsu.
\newblock Displacement convexity of generalized relative entropies. {II}.
\newblock {\em Comm. Anal. Geom.}, 21(4):687--785, 2013.

\bibitem[RS06]{RoSa}
Riccarda Rossi and Giuseppe Savar\'e.
\newblock Gradient flows of non convex functionals in {H}ilbert spaces and
  applications.
\newblock {\em ESAIM Control Optim. Calc. Var.}, 12(3):564--614, 2006.

\bibitem[San17]{Sant}
Filippo Santambrogio.
\newblock {Euclidean, metric, and Wasserstein gradient flows: an overview}.
\newblock {\em Bull. Math. Sci.}, 7(1):87--154, 2017.

\bibitem[Stu06]{Sturm06II}
Karl-Theodor Sturm.
\newblock On the geometry of metric measure spaces. {II}.
\newblock {\em Acta Math.}, 196(1):133--177, 2006.

\bibitem[Stu18]{Sturm18}
Karl-Theodor Sturm.
\newblock Gradient flows for semiconvex functions on metric measure
  spaces---existence, uniqueness, and {L}ipschitz continuity.
\newblock {\em Proc. Amer. Math. Soc.}, 146(9):3985--3994, 2018.

\bibitem[Vil09]{Villani}
C\'edric Villani.
\newblock {\em Optimal transport}, volume 338 of {\em Grundlehren der
  mathematischen Wissenschaften [Fundamental Principles of Mathematical
  Sciences]}.
\newblock Springer-Verlag, Berlin, 2009.
\newblock Old and new.

\end{thebibliography}

\end{document}